 \newtheorem{theorem}{Theorem}[section]
 \newtheorem{lemma}[theorem]{Lemma}
 \newtheorem{condition}[theorem]{Condition}
  \newtheorem{assumption}[theorem]{Assumption}
 \newtheorem{proposition}[theorem]{Proposition}
 \newtheorem{remark}[theorem]{Remark} 
 \newtheorem{corollary}[theorem]{Corollary}
 \newtheorem{example}[theorem]{Example}
\DeclareMathAlphabet{\mathpzc}{OT1}{pzc}{m}{it}
\newcommand{\mcp}{\varsigma}
\newcommand{\const}{\mathscr{C}}
\newcommand{\cnst}{\mathscr{B}}
\newcommand{\consta}{\mathscr{K}}
\newcommand{\mdpsc}{\delta(\vep)}
\newcommand{\mdpscii}{\delta^2(\vep)}
\newcommand{\mdpscb}{\beta(\vep)}
\newcommand{\modu}{\mfk r}
\newcommand{\mart}{\mathscr{M}}
\newcommand{\mar}{\bar{\mathscr{M}}}
\begin{document}
\title{Inhomogeneous functionals and approximations of invariant distributions of ergodic diffusions: Error analysis through central limit theorem and moderate deviation asymptotics}
\author{Arnab Ganguly\footnote{Research supported in part by Louisiana Board of Regents through the Board of Regents Support Fund (contract number: LEQSF(2016-19)-RD-A-04)} \ and P. Sundar\\
	Department of Mathematics\\
	Louisiana State University\footnote{Email: aganguly@lsu.edu, psundar@lsu.edu}\\
	} 

\maketitle

\begin{abstract}
\np
The paper considers an Euler discretization based numerical scheme for approximating functionals of invariant distribution of an ergodic diffusion. Convergence of the numerical scheme is shown for suitably chosen discretization step, and a thorough error analysis is conducted by proving central limit theorem and moderate deviation principle for the error term. The paper is a first step in understanding efficiency of  discretization based numerical schemes for estimating invariant distributions, which is comparatively much less studied than the schemes used for generating approximate trajectories of diffusions over finite time intervals. The potential applications of these results also extend to other areas including mathematical physics, parameter inference of ergodic diffusions and  analysis of multiscale dynamical systems with averaging.\\

\np {\bf AMS 2010 subject classifications:} 60F05, 60F10, 60H10, 60H35, 65C30.\\

\np {\bf Keywords:} estimation of invariant distributions, ergodic diffusions, Euler approximation, central limit theorem, large deviations,  moderate deviations, stochastic algorithms.

\end{abstract}


\setcounter{equation}{0}
\renewcommand {\theequation}{\arabic{section}.\arabic{equation}}
\section{Introduction.}\label{intro}
Consider the stochastic differential equation (SDE)
\begin{align}\label{SDE0}
	X(t) &= x_0 + \int_0^t b(X(s))ds+  \int_0^t\s(X(s)) dB(s), \qquad x_0 \in \R^d,
\end{align}	
where $B$ is an $m$-dimensional Brownian motion. Assume that the coefficients   $b:R^d \rt \R^d$ and $\s:R^d \rt \R^{d\times m}$  are such that \eqref{SDE0} admits a unique strong solution $X$ and that $X$ is ergodic with invariant distribution $\pi$. We are interested in estimation of $\pi$. Of course, $\pi$ satisfies the stationary Kolmogorv forward equation (also known as stationary Fokker-Plank equation),  $\SC{L}^*\pi = 0$, in the weak sense, where $\SC{L}^*$ is the adjoint of the generator $\SC{L}$ of $X$ given by
\begin{align}\label{gen-diff}
	\SC{L}g(x) = \sum_{i}b_i(x)\partial_ig(x) + \f{1}{2}\sum_{ij} a_{ij}(x)\partial_{ij}g(x), \qquad g \in C^2(\R^d,\R).
\end{align}
Here $a=\s\s^T$. But a closed form expression  of the solution of the above partial differential equation (PDE) is almost always unavailable, except in some simple examples, and numerical schemes for estimation of solutions of this PDE turn out to be computationally expensive, even in dimension $d=3$.  

An alternate approach is to use probabilistic method, where one uses the ergodic theorem to observe that under some standard conditions
\begin{align*}
\f{1}{T} \int_0^T f(X(s)) ds \rt \pi (f) \doteq \int_{\R^d} f(x)\pi(dx) \quad a.s
\end{align*}	
as $T \rt \infty$. So one could potentially use $\f{1}{T} \int_0^T f(X(s)) ds$ as an estimate for $\pi(f)$ for large $T$, and because of ergodic theorem this estimator will be asymptotically unbiased (or consistent as is called in statistics literature). But the problem is that even this integral is hard to evaluate. Even though exact simulation schemes of the diffusion $X$ are available \cite{BR05, BPR06}, the easiest and oftentimes the most practical approach in realistic models is to use an Euler-Maruyama discretization, which results in approximating this integral by a Riemann sum of the form $\f{1}{N}\sum_{k=1}^{N} f(Z^\Delta(t_k))$, where
$$Z^\Delta(t_{k+1}) = Z^\Delta(t_k)+ b(Z^\Delta(t_k)) \Delta + \s(Z^\Delta(t_k)) (W(t_{k+1}) - W(t_k)), \quad t_{k+1} - t_{k} = \Delta.$$ 
Obviously, for such a scheme to be accurate, $N$ has to be large and $\Delta$ small. But the right choices of $\Delta$ and $N$ are often not obvious for many models. We now elaborate on this issue.

Euler-Maruyama schemes for simulating  trajectories of $X$ and estimates for weak and strong error over finite time intervals have been extensively studied, and we mention only a few comprehensive surveys and books for references \cite{KlPl92, Pla99, Hig01} (also see \cite{AGK11} for error analysis of Euler approximation for density-dependent jump Markov process). In comparison, much less is available on theoretical error analysis of its use in approximation of invariant measure for ergodic diffusions. To understand the issues here, note that although the error between $X$ and $Z$ over a fixed time interval $[0,T]$ is typically  $O(\Delta)$ (weak error order), for many stochastic models, the constant involved grows with $T$. Thus estimating the error of such approximations of invariant measure, for example, by `naively' bounding $\f{1}{N}\sum_{k=1}^{N} f(Z^\Delta(t_k)) - \int_{0}^{N\Delta}f(X(s))ds$ does not work since long-time integration is involved. Even a small but fixed discretization step $\Delta$ can lead to infinite error!

  This shows that much care has to taken for a rigorous error analysis, and important early results in this context were obtained by Talay \cite{Tal90, TaTu90, Tal02}. The discretized chain $\{Z^\Delta(t_k)\}$ will often have an invariant distribution $\pi^{\Delta}$, at least if the discretization method is conveniently chosen. Then, under some favorable conditions,  
$$\f{1}{N}\sum_{k=1}^{N} f(Z^\Delta(t_k)) \rt \pi^\Delta(f), \quad \mbox{ as } N \rt \infty,$$
and the total error can be split into two parts:
$$\lf(\f{1}{N}\sum_{k=1}^{N} f(Z^\Delta(t_k)) - \pi^\Delta(f)\ri) + \lf(\pi^\Delta(f) - \pi(f)\ri).$$
The second error is `purely' due to the discretization step, while the first depends on the integration time interval $[0,T]$ ($T= N\Delta$). 
Talay  provides estimates on the second error in terms of $\Delta$ in \cite{Tal90} and \cite{TaTu90}, and notes that the first  term is extremely hard to estimate (also see \cite{Tal02}). But even the estimate on the second error term is given under some strict conditions, which in particular include boundedness of the coefficients (along with $C^\infty$ smoothness). For many stochastic models, where the drift terms satisfy a recurrence condition, including the Ornstein-Uhlenbeck process (where, $b(x) \sim -x$), the boundedness assumption on the drift could restrict applications of such a result.  For SDEs on torus, Mattingly etal. \cite{MST10} gives estimates on the error terms in terms of both $N$ and $\Delta$ (also see \cite{MSH01} for some results in the case of additive noise), but the extension of these results to non-compact case is highly non-trivial.

 It is clear, that a proper scaling between $N$ and $\Delta$ is needed for designing a suitable numerical scheme and a thorough error analysis, which is what this paper is about. Specifically, we not only prove convergence of our numerical scheme, but also establish optimality of rate of convergence through central limit theorem (CLT) and investigate moderate deviation asymptotics. 

We now briefly describe the results in the paper and make some comments about the mathematical technicalities. To discover the right scaling regime, it is convenient to speed up time by the transformation $t \rt t/\vep$, where $\vep \rt 0$. Then by a simple change of variable formula, it could be seen that the dynamics of  $X(\cdot/\vep)$ is given by the SDE \eqref{SDE1}, in the sense that its distribution is same as that of, $X^\vep$, the solution of \eqref{SDE1}. Consequently, $\int_0^t f(X^\vep(s))ds \rt t\pi(f)$ as $\vep \rt 0.$ Thus $X^\vep$ could be viewed as a fast moving process which converges to the invariant distribution $\pi$ in finite time, in contrast to $X$, which does this in infinite time. Although the two formulations are equivalent mathematically, this interpretation is useful in identifying the right scaling regimes for different limit theorems that are presented in this paper, and simplifying derivations of some of the estimates required for their proofs.

 Letting $Z^\vep$ denote the (continuous) Euler approximation of $X^\vep$ (see \eqref{SDE-Eu-1}) corresponding to the discretization step $\Delta(\vep)$, we in fact consider more general inhomogeneous integral functionals of the form $\int_{0}^\cdot f(s,Z^\vep(s)) ds$ (that is, we allow $f$ to depend explicitly on time $t$ as well), and we show that if $\Delta(\vep)  = o(\vep)$, then  $\int_{0}^t f(s,Z^\vep(s)) ds \rt \pi(f)t$. Inhomogeneous functionals are more difficult to handle, but arise naturally in many applications including statistical inference of SDEs and in averaging of dynamical systems whose trajectories are modulated by fast moving diffusions. We then investigate the central limit theorem, which not only establishes the rate of convergence, but also indicates the optimality of the order of the numerical scheme. More specifically, we prove that if $\Delta(\vep)$ is such that $\Delta(\vep) \rt 0$ sufficiently fast (faster than $o(\vep)$) then $\f{1}{\sqrt \vep}\lf(\int_{0}^t f(s,Z^\vep(s)) ds - \pi(f)t\ri)$ converges to a Gaussian process with independent increments, which can  actually be expressed by an appropriate stochastic integral (see Theorem \ref{th:cltZ}). Notice that the above CLT only implies that if $\sqrt \vep \ll \mdpsc \ll 1$, then $\PP\lf(\f{1}{\mdpsc}\lf|\int_{0}^t f(s,Z^\vep(s)) ds - \pi(f)t\ri|>x\ri)\rt 0$ as $\vep \rt 0$, but does not give any information about the rate of decay. This information can be extracted by a moderate deviation analysis, which in fact shows that the decay rate is exponential with certain speed. The precise statement on moderate deviation principle (MDP), actually at a more general process level, is the content of Theorem \ref{th:mdpZ}, and we deem it to be the most important contribution of the paper.  

The MDP is proved by a weak convergence approach, which has been developed in several works of Budhiraja, Dupuis, Ellis, and others \cite{DE97, BoDu98, BDM08, BDM11}, and which has been successful in proving large and moderate deviation principles for a variety of stochastic systems (also see \cite{BDG14} for moderate deviation principles of stochastic equations driven by Poisson random measures and \cite{DuJo15} for a result on moderate deviation for a class of recursive algorithms). The starting point in this approach is a variational representation of expectations of exponential functionals of Brownian motion, from which it can be argued that proving an LDP or equivalently, a Laplace principle entails studying tightness and weak convergence of certain controlled version of the original process. One advantage of this approach is  that it avoids some complicated exponential probability estimates which are particularly hard to obtain for our Euler approximation problem. A crucial role in the study of the tightness of both the original and the associated controlled process is played by the solution of the Poisson equation $\SC{L}u = -f,$ and its regularity properties. Many of the results which provide sufficient conditions for this required regularity properties can be found in the work of Pardoux and Veretennikov \cite{PaVe01} (also see \cite{PaVe03}). However, we do note that, although not explicitly mentioned  in \cite{PaVe01}, the proof of  the estimate on the growth rate of the derivative of the solution of the Poisson equation requires the drift $b$ to be bounded -- a condition which, as mentioned, is restrictive for ergodic diffusions. In our paper, this has been adapted to cover the case for $b$ having some growth properties. For more on this, see Remark  \ref{Ver-rem}.

As expected, similar versions of many estimates that have been developed for studying tightness of the controlled process,  are also used in proving the CLT result. Since the proofs for the controlled versions were already given, they were not repeated when a similar version is required for the original uncontrolled process. The latter proofs are often much simpler, and  only the important changes have been pointed out. Although there are quite a few methods available to prove a central limit type theorem or diffusion approximation, this paper takes a `martingale approach' and uses the martingale central limit theorem to obtain the desired result. 

A different kind of numerical scheme and related error analysis for approximation of invariant measure has been studied in a series of papers \cite{LaPa02, LaPa03, PaPa12, PaPa14, Pan08}. There, a weighted estimator of the form $\sum_{k=1}^Nw_kf(Y_k)/\sum_{k=1}^N w_k$ is considered where $\{Y_k\}$ is a Markov chain obtained by discretizing the SDE \eqref{SDE0} with decreasing time step $\Delta_k$ such that $\Delta_k \rt 0$ as $k \rt \infty$,  $\sum_{k=1}^N\Delta_k \rt \infty, \ \sum_{k=1}^Nw_k \rt \infty$ as $N \rt \infty.$ In contrast, our $\Delta$ does not change with iteration step $k$, but is suitably scaled with   $N$. The weights $w_k$ in these algorithms  could be chosen as $\Delta_k$ or could be chosen as some other values subject to some relations with $\Delta_k$. The recurrent or stability condition is in terms of a Lyapunov function,  and although the convergence of the numerical scheme is shown for a broad class of functions (like our paper),  a CLT for the error is proved only for a smaller class of test functions. These test functions are of the form $\SC{L}\varphi$, with $\varphi$ satisfying several conditions including requirement of bounded derivatives up to second or higher order.  No moderate deviation analysis has been undertaken in any of these papers, and all the results are only for homogeneous functionals (that is, when $f$ just depends on state $x$ and not on time $t$).

Interestingly, but not surprisingly, the machineries which we develop here (actually, in their much simplified versions) also prove a moderate deviation principle of the inhomogeneous integral functionals of the original process $X^\vep$. This, by itself, is an interesting problem,  homogeneous version of which has been studied in quite a few papers \cite{LiSp99, GuLi06} (also see \cite{JaSp18} for such a result in the context of a stochastic model originating from finance).  For the  inhomogeneous case, to the best of our knowledge there exist only one paper \cite{Gui01} on moderate deviation problem, which assumes that $f$ is bounded (also see \cite{Gui03}). The weak convergence approach allows us to lift some of the restrictive conditions assumed before including boundedness of $f$ in \cite{Gui01} and stronger ergodicity conditions in \cite{GuLi06}. Since the treatment of this problem is similar and actually simpler compared to the one which is the main focus of this paper, we only mention the result in Theorem \ref{th:mdpX} without proof. 

Before outlining the organization of our paper, we note that although we motivated the usefulness of these results in terms of estimation of functionals of the invariant distribution, $\pi$, when $\pi$ is unknown or complicated, these results are equally useful in many other contexts. Indeed, understanding asymptotics of integral functionals  is important for many other applications including mathematical physics where they often appear in forms of energy functionals, statistical inference of SDEs,  multiscale dynamical systems where trajectories of a differential equation is influenced by a fast moving Markov process, and option pricing in financial markets \cite{JaSp18}. For example, consider the area of parameter inference and consider the simple but widely used Ornstein-Uhlenbeck (OU) process (see Example \ref{ex:OU}), whose invariant distribution is Normal$(\mu, \s^2/2\kappa)$ (and not something complicated). If $\mu$ is unknown, then a simple, effective and asymptotically unbiased estimator of $\mu$ is $\hat\mu_T = \f{1}{T}\int_0^TX(s)ds.$ But since the data can only be collected in discrete time, it is practical to use the Riemann sum-estimator of the form $\f{1}{N} \sum_{k=1}^{N}Z(t_k)$, where the data $\{Z(t_k)\}$ could be realistically assumed to be coming from the stochastic model corresponding to the Euler approximation of the original one.  In fact, for many stochastic dynamical systems maximum likelihood or other kinds of estimators of the parameters are often nice functions of such integral functionals \cite{Bis08, Kut03}. We cite two other examples to illustrate this point. In the OU model, a {\em minimum contrast estimator} of $\kappa$ (assume $\mu=0$ and $\s=1$ for simplicity) is given by $\hat\kappa_T =  (\f{2}{T} \int_0^T X(s)^2 \ ds)^{-1}$ (see \cite{Bis08}). Estimation of $\kappa$, which measures the speed of reversion toward long term mean, is important in mathematical finance and mathematical physics (where it is the friction coefficient). Next consider the Gompertz diffusion model (see Example \ref{ex:Gomp}), which is  used in modeling commodity prices, freight shipping rates and also tumor growth. A minimum {\em minimum contrast estimator} of the parameter $\mu$ (assume for simplicity $\kappa =1$ and $\s$  is known), which in the tumor growth model is the intrinsic growth rate of tumor, is given by $\hat\mu_T = \f{1}{T}\int_0^T \ln X(s)ds + \s^2/2.$ Again, for a more realistic approach,  discretized versions should be considered. Thus  asymptotic results for many of these estimators for {\em high-frequency data} can be derived quite easily from the limit theorems proved in this paper by suitable applications of continuous mapping theorem or contraction principle (and possibly a little extra work in some cases).
   These results are instrumental in determining efficiency of these estimators, finding approximate confidence intervals or testing appropriate hypotheses.

The rest of the paper is organized as follows. In Section \ref{sec:form}, we give the mathematical formulation of our model and the statements of our main results. The variational representation and the controlled process have been described in Section \ref{sec:var}. Section \ref{sec:poi} contains the required results on  the Poisson equation. Section \ref{sec:eq-rate} gives equivalent forms of the MDP rate functions which are useful in proving upper and lower bounds, and which are proved, respectively, in Section \ref{sec:LP-ub-pf} and Section \ref{sec:LP-lb}. Estimates and related tightness results required for these proofs are discussed in Section \ref{sec:est} and the beginning of Section \ref{sec:tight}. The proof of CLT is given in Section \ref{sec:cltZ-pf}. Finally, the Appendix collects some necessary technical lemmas.\\

\np
{\em Notation:} The following mathematical notation and conventions will be 
used in the paper. For a Polish space $S$, we denote by $\mathcal{P}(S)$
(resp. $\mathcal{M}_{F}(S)$) the space of probability measures (resp. finite
measures) on $S$ equipped with the topology of weak convergence. We denote by
$C_{b}(S)$ the space of real continuous and bounded functions on $S$, and by $C_{b}^1(S)$ the space of bounded Lipschitz continuous functions on $S$. The
space of continuous functions from $[0,T]$ to $S$, equipped with the uniform
topology, will be denoted as $C([0,T]: S)$. For a bounded $\mathbb{R}^{d}$
valued function $g$ on $S$, we define $\|g\|_{\infty} = \sup_{x \in S}
\|g(x)\|$. For a measure $\nu$ on $S$, and an integrable function $g:S \rt \R^k$, $\nu(g) = \int_{S}g(x)\nu(dx)$. For $x\in \R^k$, $\|x\|$ will denote its Euclidean norm. For a matrix $M$, $\|M\|$ will denote some appropriate matrix norm. Since we are working in finite-dimension, and al norms are equivalent, we will not explicitly mention which norms are used, unless it is required. For $g:\R^d \rt \R^k$, $Dg$ will denote its derivative matrix, that is, the $l$-th row is given by $(Dg)_{l*} = \nabla g_{l}$.
$D^2g$ will denote its second derivative, that is, $(D^2g)_{lij} = \partial^{2}_{ij}g_l.$ The  {\em big $O$} and {\em little $o$} notations will be used sometimes. That is  $f(x) = O(g(x))$ as $x \rt a$ if $|f(x)| \leq C |g(x)|$ for $|x-a| \leq \kappa$ for some constants $C$ and $\kappa$, or if $a =\infty$, then for $x>B$ for some constant $B$ (or equivalently, $\limsup_{x \rt a} |f(x)/g(x)| < \infty$). Similarly, $f(x) = o(g(x))$ as $x \rt a$ if $|f(x)/g(x)| \rt 0$, as $x \rt a$.  These notations will be used mostly for the limiting regimes $x \rt \infty$ and $\vep \rt 0$, and the regime intended for such a use of big $O$ or little $o$ notation will be clear from the context. Sometimes, $f(x) \sim g(x)$ will be  used to mean that $f$ and $g$ have same rate of growth, that is, $f(x) = O(g(x))$ and $g(x) = O(f(x))$. This symbol will only be used informally for illustration purposes.

\setcounter{equation}{0}
\renewcommand {\theequation}{\arabic{section}.\arabic{equation}}
\section{ Mathematical framework and some prerequisites}\label{sec:frame}
%
%
%
\subsection{Formulation and main results} \label{sec:form}
For each $\vep>0$, let $X^\vep$ be an $\R^d$-valued diffusion process given by
\begin{align}\label{SDE1}
	X^\vep(t) &= x_0 + \f{1}{\vep}\int_0^t b(X^\vep(s))ds+  \f{1}{\sqrt \vep}\int_0^t\s(X^\vep(s)) dW(s).
\end{align}
We will always assume that the above SDE admits a unique strong solution $X^\vep$. 
 
 The following conditions on the coefficients $b:R^d \rt \R^d$ and $\s:R^d \rt \R^{d\times m}$ will be assumed.
\begin{condition} \label{cond_SDE_inv}  The coefficients $b:R^d \rt \R^d$ and $\s:R^d \rt \R^{d\times m}$ has the following properties:
	\begin{enumerate}[(i)]
            \item   there exist constants $\g>0, \alpha\geq 0$ and $B\geq 0$ such that 
		$$\<x, b(x)\> \leq - \g\|x\|^{1+\alpha}, \quad \mbox{ for } \|x\| >B;$$

		\item there exist strictly positive constants $\l_1$ and $\l_2$ such that for all $x,y \in \R^d$
		\begin{align*}
		0< \l_1 \leq y^T \lf(\s(x)\s^T(x)\ri)y/\|y\|^2 \leq \l_2.
		\end{align*}

		  
	\end{enumerate}
\end{condition}	

\begin{remark} {\rm In the above condition, 
(i) is needed for positive recurrence of $X$, which in turn guarantees existence of an invariant probability measure. Uniqueness of the invariant distribution then follows from non-degeneracy of the matrix $a=\s\s^T$ as formulated in (ii). Note that, in particular, (ii) implies that $\|\s\|_{\infty} = \sup_{x\in \R^d}\|\s(x)\|_{op} < \infty$, where $\|\cdot\|_{op}$ denotes the operator norm. Of course, this is true for any other matrix norm as well, since all such norms are equivalent, and we will drop the suffix {\em op} when considering matrix norm.	

The uniform ellipticity condition (as well as the boundedness assumption) on $a$ could be lifted, if it could be shown that a unique invariant distribution and a unique solution of the Poisson equation \eqref{Pois-1}  exist and satisfy some desired regularity properties. If the boundedness assumption on $a$ (or equivalently, $\s$) is removed, then its growth or decay rate could be incorporated into the assumptions quite easily.
} 
\end{remark}

Under Condition \ref{cond_SDE_inv}, it is well known that $X$ is an ergodic diffusion process with unique invariant measure $\pi$ \cite{Ver87, PaVe01}. Moreover there exist constants $\Theta$, $\theta_1$ and $\theta_2$ such that
\begin{align*}
\|P_t(x,\cdot) - \pi\|_{TV} \leq \Theta \exp\lf(\theta_1\|x\|\ri)\exp(-\theta_2 t), \quad \int_{\R^d} \exp(\theta_1\|x\|)\pi(dx)< \infty,
\end{align*}	
where $P_t(x,\cdot)$ denotes the transition probability kernel and $\|\cdot\|_{TV}$ denotes the total variation norm.

Since we will be dealing with discretization of the original process, much of the required estimates will need appropriate assumptions on the moduli of continuity of the coefficients. In particular, in this paper we work with H\"older continuity (and thus, of course, covering  the case of Lipschitz continuous coefficients), but we anticipate that these assumptions could be sufficiently weakened to cover more general stochastic equations, as long as existence and uniqueness of solutions are guaranteed. But we do note that the following condition is not needed for the MDP result of the original process $X^\vep$ (see Theorem \ref{th:mdpX}).

\begin{condition}  \label{cond_SDE_coef-ex}
\begin{enumerate}[(i)]
		\item
		$b:R^d \rt \R^d$ and $\s:R^d \rt \R^{d\times m}$  are Holder continuous functions with exponent $\nu \in [0,1]$ and Holder constants, $L_b$ and $L_\s$, respectively, that is,
		\begin{align} \label{def-lip}
			L_b = \sup_{x\neq x'} \f{\|b(x) - b(x')\|}{\|x-x'\|^\nu}, \quad L_\s = \sup_{x\neq x'} \f{\|\s(x) - \s(x')\|}{\|x-x'\|^\nu};
		\end{align}
	
	\item there exists a constant $\cnst$ such that $\|b(x)\|\leq  \cnst (1+ \|x\|^{\bar \alpha})$, for $\bar \alpha \leq \alpha \wedge 1$ ;

\end{enumerate}
\end{condition}

Next, for some $n \in \N$, let $f :[0,\infty)\times\R^d \rt \R^n$ be a  function satisfying the following assumption.

\begin{assumption}\label{assum-f}
 For each $t\geq0$, $\pi(f(t,\cdot)) \equiv \int f(t,x) \pi(dx)= 0$, that is, $f$ is centralized. Furthermore, there exist exponents $p_0, q_0 \in \R$ and a constant $\const(T)$ such that 
	\begin{enumerate}[(i)]
		 
		\item $\sup_{t\leq T}\|f(t,x)\| \leq \const(T) (1+ \|x\|)^{p_0};$
		\item $\omega_f(\Delta,x) \leq \const(T)\modu(\Delta)(1+\|x\|)^{q_0}$, where $\omega_f(\Delta,x) \doteq \sup_{|t-s|\leq \Delta, 0\leq s,t\leq T}\|f(x,t) - f(x,s)\|$ is the modulus of continuity of $f$.
     \end{enumerate}		
\end{assumption}
We will need $\modu(\Delta) = o(\sqrt\Delta)$ for the CLT and $\modu(\Delta) = O(\sqrt\Delta)$ for MDP. 	

\begin{remark} ({\bf about notational convention}) {\rm
If $p_0\geq 0$, then by a slight abuse of notation, we will use the same constant $\const(T)$ to write $\sup_{t\leq T}\|f(t,x)\| \leq \const(T) (1+ \|x\|^{p_0}).$ Similar convention will be followed throughout for such	 estimates.
}
\end{remark}

We now consider an appropriate Euler-Maruyama discretization of scheme for $X^\vep$. Let $\{t_k\}$ be a partition of $[0,T]$ such that $\Delta\equiv \Delta(\vep) = t_k - t_{k-1}$, and  let $Z^\vep$ denote the (continuous) Euler approximation of $X^\vep$. In other words, let $Z^\vep$ be the solution to the stochastic equation:
\begin{align}\label{SDE-Eu-1}
	Z^\vep(t) &= x_0 + \f{1}{\vep}\int_0^t b(Z^\vep(\vr_\vep(s)))ds+  \f{1}{\sqrt \vep}\int_0^t\s(Z^\vep(\vr_\vep(s))) dW(s),
\end{align}
where $\vr_\vep(s) = t_k$ if $t_k\leq s < t_{k+1}.$

Let $\Xi_\vep$, defined by $\Xi_\vep(A\times [0,t]) = \int_0^t1_{\{Z^\vep(s) \in A\}}ds$, denote the occupation measure of the process $Z^\vep$, and, as standard, $\Xi_\vep(f)$ will denote the following:
\begin{align*}
\Xi_\vep(f) = \int_{\R^d\times [0,\cdot]}f(s,x)\Xi_\vep(dx\times ds)=   \int_0^{\cdot} f(s, Z^\vep(s))ds.
\end{align*}
 The paper is devoted to study of  precise asymptotic estimates of probabilities like
$P(\|\Xi_\vep(f)(t)\|> x \mdpsc)$ for rightly scaled discretization step $\Delta(\vep)$ in the following scaling regimes:
\begin{itemize}
\item {\em Central limit scaling:} $\mdpsc = \vep^{1/2}.$	

\item {\em Moderate deviation scaling:}
$
\vep\rt 0,\quad	\mdpsc \rt 0, \quad  \mdpscb\equiv \vep/\mdpscii \rt 0.
$

\end{itemize}
Since in the second regime, $\sqrt \vep \ll \mdpsc$, it is clear that these probabilities cannot be estimated by a central limit theorem, which can only estimate probabilities of deviation near the mean. The study of these probabilities falls under the purview of moderate deviation asymptotics, while the case $\mdpsc = 1$ requires investigating large deviation asymptotics (which we do not undertake in this paper). 

In this paper, the notation $\mdpsc$ will be exclusively reserved for moderate deviation scaling regime. 

The paper actually proves a  more general result at the process level. Specifically, defining 
\begin{align*}
\Up_\vep(f) \doteq  \f{1}{\mdpsc} \Xi_\vep(f) = \f{1}{\mdpsc} \int_0^\cdot f(s, Z^\vep(s))ds,
\end{align*}
we establish a (functional) CLT and a large deviation principle (LDP) for $\Up_\vep(f)$  in $C([0,T],\R^d).$ The LDP of $\Up_\vep(f)$ is interpreted as a MDP of the process $\Xi_\vep(f)$. 

For implementation, it might be even more practical and convenient to use the Riemann sum, $\Xi^R_\vep(f) = \sum_{i=1}^{[t/\Delta(\vep)]} f(Z_\vep(t_i))\Delta(\vep)$ as the estimator (the superscript $R$ stands for Riemann sum). The associated limit theorems could be proved under either one of the following additional conditions on $f$.
\begin{assumption}\label{assum-f-2}Either
\begin{enumerate}[(A)] 
\item $f$ is H\"older continuous with H\"older exponent $\nu_f \in (0,1]$; or
\item $f$ is differentiable and $\sup_{t\leq T}\|Df(t,x)\| \leq \const(T)(1+\|x\|)^{p_0'},$ for some $p_0'\geq 0$.
\end{enumerate}	

\end{assumption}

For understanding asymptotics of the above Riemann sum-estimator, it is convenient to work with its integral representation:
\begin{align}\label{est-Riem}
\Xi^R_\vep(f) =   \int_0^{\cdot} f(\vr_\vep(s), Z^\vep(\vr_\vep(s)))ds.
\end{align}	 

Before we state our CLT and LDP results, we first state the result guaranteeing the convergence of our scheme.

\begin{theorem}\label{th:llnZ}
	Let  $f :[0,\infty)\times\R^d \rt \R^n$ satisfy Assumption \ref{assum-f}, with $\modu(\Delta) = O(\sqrt\Delta)$. Let $Z^\vep$ be defined by \eqref{SDE-Eu-1}, where the the step size $\Delta(\vep)$ is such that $\Delta(\vep)/\vep \rt 0$, as $\vep \rt 0$. Then under Condition \ref{cond_SDE_inv}, Condition \ref{cond_SDE_coef-ex},  for $T>0$, there exists a constant $\consta(T)$ such that
	\begin{align*}
		\EE\lf[\sup_{t\leq T}\|\Xi_\vep(f)(t)\|\ri] \leq \consta(T)\sqrt\vep.
	\end{align*}	
In particular, $\Xi_\vep(f) \rt 0$ in probability in $C([0,T], \R^n)$ 	as $\vep \rt 0.$	 (Recall that $f$ is already centralized).

Suppose, in addition, that Assumption \ref{assum-f-2} holds.
Then the above assertion is also true for $\Xi^R_\vep(f)$.

\end{theorem}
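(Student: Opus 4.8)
My plan is to argue through the Poisson equation associated with the generator $\SC{L}$. For each fixed $t$, let $u(t,\cdot)$ solve $\SC{L}u(t,\cdot)=-f(t,\cdot)$ with $\pi(u(t,\cdot))=0$; the input I need --- polynomial growth of $u$, $D_xu$, $D_x^2u$ uniformly for $t\le T$, together with a modulus of continuity in $t$ inherited from $f$ --- is exactly what the results of Section \ref{sec:poi} provide. Since $f$ is only assumed to have a modulus of continuity (not a derivative) in time, I would first handle the $t$-dependence by a standard regularization of $f$ in the time variable (mollification, or freezing $t$ on an auxiliary grid of mesh $\asymp\vep$), which introduces an error of the form $\int_0^t\omega_f(\vep,Z^\vep(s))\,ds$ with expectation of order $\modu(\vep)=O(\sqrt\vep)$, using Assumption \ref{assum-f}(ii) and the uniform moment bounds $\sup_\vep\EE[\sup_{t\le T}\|Z^\vep(t)\|^p]<\infty$ that follow from the dissipativity in Condition \ref{cond_SDE_inv}(i).

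Applying It\^o's formula to $u(s,Z^\vep(s))$ along \eqref{SDE-Eu-1} and using $\SC{L}u(s,\cdot)=-f(s,\cdot)$ leads, after this regularization, to a decomposition
\begin{align*}
\Xi_\vep(f)(t)=-\vep\bigl[u(t,Z^\vep(t))-u(0,x_0)\bigr]+\vep\int_0^t\partial_su(s,Z^\vep(s))\,ds+\sqrt\vep\,\mart^\vep(t)+\int_0^t\mathcal E^\vep(s)\,ds,
\end{align*}
where $\mart^\vep(t)=\int_0^tD_xu(s,Z^\vep(s))\,\s(Z^\vep(\vr_\vep(s)))\,dW(s)$ is a square-integrable martingale and
\begin{align*}
\mathcal E^\vep(s)=D_xu(s,Z^\vep(s))\cdot[b(Z^\vep(\vr_\vep(s)))-b(Z^\vep(s))]+\tfrac12\,\mathrm{tr}\bigl(D_x^2u(s,Z^\vep(s))\,[a(Z^\vep(\vr_\vep(s)))-a(Z^\vep(s))]\bigr),\qquad a=\s\s^T,
\end{align*}
is the error produced by freezing the coefficients at the discretization times. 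The boundary term is $O(\vep)$ by the growth of $u$ and the moment bounds; the $\partial_su$ term is handled likewise once the regularization scale is balanced; and $\sqrt\vep\,\mart^\vep$ is $O(\sqrt\vep)$ by the Burkholder--Davis--Gundy inequality, $\|\s\|_\infty<\infty$ (Condition \ref{cond_SDE_inv}(ii)), and the growth of $D_xu$.

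The heart of the proof is $\int_0^t\mathcal E^\vep(s)\,ds$, which is comparable to the bias of the Euler scheme; absorbing it into $O(\sqrt\vep)$ is where the real work --- and the hypothesis on $\Delta(\vep)$ --- is concentrated. The basic ingredient is the increment estimate $\EE\|Z^\vep(s)-Z^\vep(\vr_\vep(s))\|^2\le C(\Delta^2/\vep^2+\Delta/\vep)\le C\Delta/\vep$ (valid once $\Delta/\vep\le1$), which, with the H\"older continuity of $b,\s$ from Condition \ref{cond_SDE_coef-ex}, the growth of $D_xu,D_x^2u$, and H\"older's inequality, gives a preliminary bound. To sharpen it I would use the explicit cell representation
\begin{align*}
Z^\vep(s)-Z^\vep(\vr_\vep(s))=\vep^{-1}(s-\vr_\vep(s))\,b(Z^\vep(\vr_\vep(s)))+\vep^{-1/2}\,\s(Z^\vep(\vr_\vep(s)))\bigl(W(s)-W(\vr_\vep(s))\bigr):
\end{align*}
the Brownian-increment part, once integrated over a computational cell $[t_k,t_{k+1})$, is conditionally centered given $\mathcal F_{\vr_\vep(s)}$, so those contributions assemble into a discrete-time martingale of $L^2$-norm $O(\Delta/\sqrt\vep)=o(\sqrt\vep)$, while the finite-variation part is rewritten, modulo negligible terms, as a multiple of $\f{\Delta}{\vep}\int_0^\cdot h(Z^\vep(s))\,ds$ for an explicit $h$ and then estimated by splitting $h=\pi(h)+(h-\pi(h))$ and feeding the centralized part $h-\pi(h)$ back into the occupation-time bound (a bootstrap). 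Assembling the four terms and taking $\EE[\sup_{t\le T}\|\cdot\|]$ yields $\consta(T)\sqrt\vep$, and $\Xi_\vep(f)\to0$ in probability in $C([0,T],\R^n)$ then follows from Markov's inequality. I expect the careful bookkeeping of $\int_0^t\mathcal E^\vep(s)\,ds$ --- balancing the Poisson-solution regularity, the Euler-increment moments, and the within-cell martingale cancellation --- to be the main obstacle.

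For the Riemann-sum estimator, the representation \eqref{est-Riem} reduces matters to estimating $\int_0^\cdot[f(\vr_\vep(s),Z^\vep(\vr_\vep(s)))-f(s,Z^\vep(s))]\,ds$; splitting this into a time-shift and a state-shift, the former is $O(\modu(\Delta))$ by Assumption \ref{assum-f}(ii), and the latter is where Assumption \ref{assum-f-2} enters --- under (A) it is bounded by a polynomial times $\|Z^\vep(s)-Z^\vep(\vr_\vep(s))\|^{\nu_f}$, under (B) by the mean-value inequality with $\|D_xf\|$ of polynomial growth --- and is then controlled by the same within-cell argument used for $\int_0^t\mathcal E^\vep(s)\,ds$. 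Thus the assertion for $\Xi^R_\vep(f)$ follows from the one for $\Xi_\vep(f)$.
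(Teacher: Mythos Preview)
Your overall strategy coincides with the paper's: solve the Poisson equation $\SC{L}u(t,\cdot)=-f(t,\cdot)$, handle the $t$-dependence by freezing $t$ on an auxiliary grid of mesh $\vep$ (the paper's $\{\tilde t_k\}$ with $\tilde\Delta=\vep$), apply It\^o's formula, and decompose $\Xi_\vep(f)$ into a boundary term, a martingale term, and the errors the paper calls $\hat{\SC E}^\vep_0,\ldots,\hat{\SC E}^\vep_3$. Your treatment of the boundary, martingale, time-modulus, and Riemann-sum pieces matches the paper's Section~\ref{sec:cltZ-pf} line by line. (A minor point: once you freeze on a grid there is no $\partial_s u$ integral; what appears instead is the telescoping sum $\sum_k\bigl[u(\tilde t_{k+1},\cdot)-u(\tilde t_k,\cdot)\bigr]$, controlled by Condition~\ref{u-reg-assum}(iii).)

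Where you diverge is the Euler-bias term $\int_0^t\mathcal E^\vep(s)\,ds$ (the paper's $\sqrt\vep\,\hat{\SC E}^\vep_0$). The paper does \emph{not} use any within-cell martingale cancellation or bootstrap. It simply combines the H\"older continuity of $b$ and $a$ from Condition~\ref{cond_SDE_coef-ex}, the integrated increment bound $\int_0^T\EE\|Z^\vep(s)-Z^\vep(\vr_\vep(s))\|^{2\nu}ds\le C(\Delta/\vep)^{\nu}$ from Lemma~\ref{lem-diff-est-Z}, the polynomial growth of $Du,D^2u$ from Proposition~\ref{u-est}, the moment bounds of Lemma~\ref{Z-proc-mmt-bd}, and Cauchy--Schwarz, to get $\EE\bigl[\sup_{t\le T}\|\sqrt\vep\,\hat{\SC E}^\vep_0(t)\|\bigr]\le C(T)(\Delta/\vep)^{\nu/2}$, which vanishes under $\Delta(\vep)/\vep\to 0$. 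That is the whole argument for this piece.

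Your refinement is both unnecessary and, as written, not quite sound under the stated hypotheses. For the Brownian-increment contribution to $b(Z^\vep(\vr_\vep(s)))-b(Z^\vep(s))$ to be conditionally centered over a cell you would need to Taylor-expand $b$, i.e.\ $b\in C^1$; under Condition~\ref{cond_SDE_coef-ex} $b$ is only H\"older with exponent $\nu\in(0,1]$, and a H\"older-nonlinear function of a centered Gaussian increment is not centered, so the ``discrete-time martingale of $L^2$-norm $O(\Delta/\sqrt\vep)$'' does not materialize. The bootstrap step for the drift part is likewise superfluous: the direct estimate already delivers $(\Delta/\vep)^{\nu/2}$, and splitting $h=\pi(h)+(h-\pi(h))$ buys no further smallness without re-invoking the very occupation-time bound you are trying to prove. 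In short, drop the cell-level analysis and follow the paper's cruder route; all the ingredients (Lemmas~\ref{lem-diff-est-Z}, \ref{Z-proc-mmt-bd}, Corollary~\ref{mmt-bd-Z}, Proposition~\ref{u-est}) are already in place.
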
	 

The proof of this theorem follows easily from the proof of the CLT (stated below) which is given in Section \ref{sec:cltZ-pf}. Indeed, multiplying \eqref{u-Ito-Z} by $\sqrt\vep$, one uses similar estimates (actually simpler versions) used in Section \ref{sec:cltZ-pf} and the proof of Theorem \ref{prop-tight}. In fact by Markov's inequality and Borel-Cantelli lemma, the subsequences along which the convergence is almost sure can be precisely constructed.

For the CLT and the MDP results, we first define the matrix $M_f(t)$ by 
	\begin{align}\label{M-mat-def}
	(M_f(t))_{i,j} = &\ \int_{\R^d}\int_0^\infty \lf[f_i(t,x) P_sf_j(t,\cdot)(x)+         f_j(t,\cdot) P_sf_i(t,\cdot)(x)\ri]ds d\pi(x),
    \end{align}	
where, by a slight abuse of notation, we used $\{P_t\}$ to denote the semigroup corresponding to the transition probability kernels $\{P_t\}$ of $X$; in other words, $P_tg(x) = \int_{\R^d}g(y)P_t(x,dy).$

\begin{theorem}\label{th:cltZ} Let  $f :[0,\infty)\times\R^d \rt \R^n$ satisfy Assumption \ref{assum-f}, with $\modu(\Delta) = o(\sqrt\Delta)$. Let $Z^\vep$ be defined by \eqref{SDE-Eu-1}, where the the step size $\Delta(\vep)$ is such that $(\Delta(\vep)/\vep)^{\nu/2}/\sqrt \vep \rt 0$, as $\vep \rt 0$. Then under Condition \ref{cond_SDE_inv}, Condition \ref{cond_SDE_coef-ex},
		\begin{align*}
	\vep^{-1/2}\Xi_\vep(f) \RT \int_0^{\cdot}M^{1/2}_f(s)dW(s),
		\end{align*}	
as $\vep \rt 0.$	

 Moreover  the above assertion is also true for $\vep^{-1/2}\Xi^R_\vep(f)$ if either one of the two conditions in Assumption \ref{assum-f-2} holds and  $\Delta(\vep)$ is such that $(\Delta(\vep)/\vep)^{\tilde \nu/2}/\sqrt \vep \rt 0$, as $\vep \rt 0$, where 
 \begin{itemize}
 \item 
 $\tilde \nu = \nu \wedge \nu_f$ for  Assumption \ref{assum-f-2}-(A), and
 \item $\tilde \nu = \nu$ for  Assumption \ref{assum-f-2}-(B).
 
 \end{itemize}

\end{theorem}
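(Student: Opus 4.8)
The plan is to take the martingale approach advertised in the introduction. The starting point is the Poisson equation $\SC{L}u(t,\cdot) = -f(t,\cdot)$, which by the results of Section \ref{sec:poi} (the adaptation of Pardoux--Veretennikov) admits a solution $u(t,\cdot)$ with controlled polynomial growth of $u$ and its first two spatial derivatives. Applying It\^o's formula to $u(t,Z^\vep(t))$ (this is equation \eqref{u-Ito-Z} referred to in the text), and using that $\SC{L}u = -f$, one rewrites $\Xi_\vep(f)(t) = \int_0^t f(s,Z^\vep(s))\,ds$ as a sum of: (a) boundary terms $\vep\,[u(0,x_0) - u(t,Z^\vep(t))]$; (b) a time-derivative term $\vep\int_0^t \partial_s u(s,Z^\vep(s))\,ds$; (c) a stochastic integral $M^\vep(t) = \sqrt\vep\int_0^t Du(s,Z^\vep(s))\s(Z^\vep(\vr_\vep(s)))\,dW(s)$; and (d) a discretization-error term coming from the fact that $Z^\vep$ uses the frozen coefficients $b(Z^\vep(\vr_\vep(s)))$, $\s(Z^\vep(\vr_\vep(s)))$ rather than $b(Z^\vep(s))$, $\s(Z^\vep(s))$ — roughly $\frac1\vep\int_0^t Du(s,Z^\vep(s))[b(Z^\vep(\vr_\vep(s)))-b(Z^\vep(s))]\,ds$ plus the second-order It\^o correction involving $D^2u$ and the difference of the diffusion matrices evaluated at $\vr_\vep(s)$ versus $s$.

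After multiplying through by $\vep^{-1/2}$, the plan is to show that all terms except $\vep^{-1/2}M^\vep$ vanish in $C([0,T],\R^n)$ in probability. Terms (a) and (b) carry a prefactor $\vep^{1/2}$ after scaling and so go to zero using the moment bounds on $u$, $\partial_s u$ and on $\sup_{t\le T}\|Z^\vep(t)\|$ (uniform-in-$\vep$ moment estimates of the Euler scheme, available from Section \ref{sec:est}). For the discretization term (d): the increment $Z^\vep(s) - Z^\vep(\vr_\vep(s))$ is, in $L^p$, of order $(\Delta/\vep) + \sqrt{\Delta/\vep}\sim\sqrt{\Delta/\vep}$, so by H\"older continuity of $b$ and $\s$ with exponent $\nu$ the frozen-coefficient errors are of order $(\Delta/\vep)^{\nu/2}$ in $L^p$; after the $\frac1\vep$ (resp. the second-order) prefactor and the $\vep^{-1/2}$ scaling, this contributes a term of order $\vep^{-1/2}(\Delta/\vep)^{\nu/2}$, which vanishes precisely by the hypothesis $(\Delta(\vep)/\vep)^{\nu/2}/\sqrt\vep\to 0$. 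The modulus-of-continuity hypothesis $\modu(\Delta)=o(\sqrt\Delta)$ is used analogously to control the time-regularity contribution of $f$ inside $u$ and $\partial_s u$. One should run these estimates at the level of $\EE[\sup_{t\le T}\|\cdot\|]$, as in Theorem \ref{th:llnZ}, to get convergence in the uniform topology.

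It then remains to identify the limit of the martingale part $N^\vep \doteq \vep^{-1/2}M^\vep$ via the martingale central limit theorem. Its predictable quadratic covariation is
\begin{align*}
\langle N^\vep\rangle_{i,j}(t) = \int_0^t \big(Du_i(s,Z^\vep(s))\,a(Z^\vep(\vr_\vep(s)))\,Du_j(s,Z^\vep(s))^T\big)\,ds.
\end{align*}
Replacing $Z^\vep(\vr_\vep(s))$ by $Z^\vep(s)$ (another frozen-coefficient estimate, using continuity of $a$ and the growth bound on $Du$), one is left with $\int_0^t g_{ij}(s,Z^\vep(s))\,ds$ where $g_{ij}(s,x) = Du_i(s,x)\,a(x)\,Du_j(s,x)^T$; then $\SC{L}$ applied to the relevant quantity together with the identity $Du\,a\,Du^T = \frac12\SC{L}(u\otimes u) - u\otimes\SC{L}u - \SC{L}u\otimes u = \frac12\SC{L}(u\otimes u) + u\otimes f + f\otimes u$ (componentwise), combined with the exponential ergodicity of $X$, shows $\int_0^t g_{ij}(s,Z^\vep(s))\,ds \to \int_0^t \pi(g_{ij}(s,\cdot))\,ds$; and the standard computation $\pi(Du_i\,a\,Du_j^T) = \pi(u_i f_j + f_i u_j)$ together with $u(t,x) = \int_0^\infty P_s f(t,\cdot)(x)\,ds$ identifies $\pi(g_{ij}(s,\cdot)) = (M_f(s))_{i,j}$, i.e. $\langle N^\vep\rangle(t) \to \int_0^t M_f(s)\,ds$ in probability. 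A Lindeberg/jump condition holds trivially since $N^\vep$ is continuous and has uniformly $L^2$-bounded increments; the martingale CLT then gives $N^\vep \RT \int_0^\cdot M_f^{1/2}(s)\,dW(s)$. Adding back the negligible terms (a)--(d) via Slutsky finishes the $\Xi_\vep(f)$ case.

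For the Riemann-sum estimator $\Xi^R_\vep(f)$, one uses the integral representation \eqref{est-Riem} and writes $\Xi^R_\vep(f) - \Xi_\vep(f) = \int_0^\cdot [f(\vr_\vep(s),Z^\vep(\vr_\vep(s))) - f(s,Z^\vep(s))]\,ds$. Under Assumption \ref{assum-f-2}-(A), H\"older continuity of $f$ in space (exponent $\nu_f$) plus the time-modulus bound controls this by $\modu(\Delta) + (\Delta/\vep)^{\nu_f/2}$ in $L^p$ uniformly on $[0,T]$; under (B), the mean-value bound via $Df$ gives $(\Delta/\vep)^{1/2}$ instead of $(\Delta/\vep)^{\nu_f/2}$, so effectively $\tilde\nu$ replaces $\nu$. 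After the $\vep^{-1/2}$ scaling this difference vanishes precisely under the stated rate $(\Delta(\vep)/\vep)^{\tilde\nu/2}/\sqrt\vep\to 0$ (note $\tilde\nu\le\nu$ so this rate also implies the one needed for $\Xi_\vep(f)$), and Slutsky again transfers the limit.

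The main obstacle I anticipate is \textbf{(d)}, controlling the discretization error after the singular $\frac1\vep$ prefactor: one must extract a genuine power $(\Delta/\vep)^{\nu/2}$ from the frozen coefficients, which requires both the uniform-in-$\vep$ moment bounds on $Z^\vep$ and on the local increments $Z^\vep(s)-Z^\vep(\vr_\vep(s))$, and the polynomial growth bounds on $Du$ and $D^2u$ from the Poisson-equation regularity theory (Section \ref{sec:poi}) — the latter being exactly where the boundedness-of-$b$ issue flagged in Remark \ref{Ver-rem} had to be circumvented. A secondary technical point is justifying the ergodic averaging $\int_0^t g_{ij}(s,Z^\vep(s))\,ds\to\int_0^t\pi(g_{ij}(s,\cdot))\,ds$ uniformly in $t$ and with the explicit identification of the limit as $M_f$; this is again handled by a Poisson-equation / exponential-ergodicity argument entirely parallel to the one used for $\Xi_\vep(f)$ itself, so no new machinery is needed, but the bookkeeping with the time variable $s$ entering $u(s,\cdot)$ must be done carefully using Assumption \ref{assum-f}.
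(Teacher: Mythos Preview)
Your overall architecture (Poisson equation $\SC{L}u=-f$, It\^o decomposition, negligibility of remainder terms, martingale CLT on the stochastic integral with quadratic variation identified as $\int_0^\cdot M_f(s)\,ds$ via Lemma \ref{M-mat-alt}) matches the paper. The treatment of the frozen-coefficient error (your term (d)), the boundary term, the martingale CLT step, and the Riemann-sum comparison are all essentially what the paper does.

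There is, however, a genuine gap in your handling of the time variable. Your term (b) is $\vep\int_0^t \partial_s u(s,Z^\vep(s))\,ds$, which presupposes that $u$ is differentiable in $t$. Nothing in the hypotheses gives this: Assumption \ref{assum-f} only controls the \emph{modulus of continuity} $\omega_f(\Delta,x)\le \const(T)\modu(\Delta)(1+\|x\|)^{q_0}$ with $\modu(\Delta)=o(\sqrt\Delta)$, and correspondingly Condition \ref{u-reg-assum}-(iii),(iv) (established in Proposition \ref{u-est}) only give modulus-of-continuity bounds on $u$ and $Du$ in $t$, not $\partial_t u$. So you cannot apply It\^o's formula to $(s,x)\mapsto u(s,x)$ jointly, and your term (b) is not well-defined under the stated hypotheses.

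The paper circumvents this by a device you are missing: it introduces a \emph{coarser} partition $\{\tilde t_k\}$ with mesh $\tilde\Delta=\vep$ and applies It\^o--Krylov on each subinterval $[\tilde t_k,\tilde t_{k+1}]$ to $x\mapsto u(\tilde t_k,x)$ with the time argument \emph{frozen}. Summing over $k$ produces, in place of your $\partial_s u$ integral, a telescoping sum $\hat{\SC E}_1^\vep(t)=\sqrt\vep\sum_k[u(\tilde t_{k+1},Z^\vep(\tilde t_{k+1}))-u(\tilde t_k,Z^\vep(\tilde t_{k+1}))]$, which is controlled purely by the modulus of continuity of $u$ in $t$: it is bounded by $\const_1(T)\,\dfrac{\modu(\vep)}{\sqrt\vep}\int_0^T(1+\|Z^\vep(\eta_\vep(s))\|^{q_1})\,ds\to 0$. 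This is exactly where the hypothesis $\modu(\Delta)=o(\sqrt\Delta)$ enters (applied at scale $\vep$), and also why the integral moment bounds of $Z^\vep\circ\eta_\vep$ (Remark \ref{int-disc-est-Z}) are needed. The same frozen-time device generates two further small error terms ($\hat{\SC E}_2^\vep$ from $f(s,\cdot)-f(\eta_\vep(s),\cdot)$ and $\hat{\SC E}_3^\vep$ from $Du(\eta_\vep(s),\cdot)\s(\cdot\circ\vr_\vep)-Du(s,\cdot)\s(\cdot)$ in the martingale integrand) that you would also need to track. Once you insert this coarse-partition step, the rest of your argument goes through as written.
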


Finally, we state our MDP result, which we deem to be the most important contribution of the present paper. The full statement requires some assumptions on the solution $u$ of the Poisson equation, $\SC{L}u = -f$, which is the topic of Section \ref{sec:poi}. 

\begin{remark}{\rm
As the reader might observe, we did not explicitly include similar assumptions (Condition \ref{u-reg-assum}) for statements of Theorem \ref{th:llnZ} and Theorem \ref{th:cltZ}.
The reason for this is that under the hypotheses of those theorems, the existence of the solution $u$ with some polynomial growth rate is already given by Proposition \ref{u-est}, which is essentially the result of Pardoux and Veretennikov \cite{PaVe01}. That was enough for proof of these two theorems. 

Now for our MDP result, although such existence is also guaranteed, the growth rate of $u$ and its derivatives have to satisfy some further restrictions (Assumption \ref{assum-exp-u}), which we don't require for Theorem \ref{th:llnZ} and Theorem \ref{th:cltZ}. This is because for proof of MDP result, we need to establish tightness of certain controlled versions of $Z^\vep$.

 In this connection, Proposition \ref{u-est} is only a `sufficient type' result, and the growth rate coming out of Proposition \ref{u-est} might not always be optimal. In other words, for some functions $f$, there might be an alternate way (for example, by direct computation) of computing  the actual growth rates of $u$ and its derivatives. It can turn out that these actual rates satisfy Assumption \ref{assum-exp-u}, whereas the growth rates given by Proposition \ref{u-est} are higher and do not satisfy Assumption \ref{assum-exp-u}! It would then seem that our MDP result will not apply to those functions $f$, where in reality it does. That is why we decided to state the result in more generality. 
 
 Lastly, we do make the obvious observation that if we in fact first choose a $u$ satisfying Condition \ref{u-reg-assum} and Assumption \ref{assum-exp-u} (such functions are of course abundant), then our MDP results apply to $\SC{L}u$. In other words,  our results are applicable to a large class of  `test' functions of the form $\SC{L}u$, where $u$ satisfies Condition \ref{u-reg-assum} and Assumption \ref{assum-exp-u}. It is the inverse problem, that is where $f$ is given first and MDP results are needed for integral functionals of $f$, which requires finding the solution $u$ and verifying its regularity properties.
}
\end{remark}

\begin{theorem}\label{th:mdpZ} Let  $f :[0,\infty)\times\R^d \rt \R^n$ satisfy Assumption \ref{assum-f} with $\modu(\Delta) = O(\sqrt \Delta)$. Let $Z^\vep$ be defined by \eqref{SDE-Eu-1}, where the the step size $\Delta(\vep)$ is such that $(\Delta(\vep)/\vep)^{\nu/2}/\sqrt \vep \rt 0$, as $\vep \rt 0$. Then under Condition \ref{cond_SDE_inv} (with $\alpha>0$), Condition \ref{cond_SDE_coef-ex} and Condition \ref{u-reg-assum} and Assumption \ref{assum-exp-u} , as $\vep\rt 0$, $\{\Up_\vep(f)\}$ satisfies a LDP on $C([0,T],\R^n)$ with speed $\mdpscb  \equiv \vep/\mdpscii$ and rate function $I_f$ given by
\begin{align}\label{rate2}
	I_f(\xi) =
	\begin{cases}
		\f{1}{2} \int_0^T (\dot \xi(s))^T M_f(s)^{-1} \dot \xi(s) ds, & \quad \xi \mbox{ is absolutely continuous};\\
		\infty, & \quad \mbox{ otherwise.}
	\end{cases}
\end{align}
That is,
\begin{align*}
	\liminf_{\vep\rt 0} \mdpscb \log \PP(\Up_{\vep}(f) \in O) \geq -I_f(O), \ \ \mbox{for every open set } \ O \in C([0,T],\R^n) ,\\
\end{align*}
and
\begin{align*}
	\limsup_{\vep\rt 0} \mdpscb \log \PP(\Up_{\vep}(f) \in C) \leq -I_f(C), \ \ \mbox{for every closed set } \ C \in C([0,T],\R^n).
\end{align*}

Moreover,  $\Xi^R_\vep(f)/\mdpsc$ also satisfies a LDP  on $C([0,T],\R^n)$ with the same speed and the same rate function if
 \begin{itemize}
 	\item Assumption \ref{assum-f-2}-(A) holds and $\Delta(\vep)$ is chosen such that $$\min\lf\{(\Delta(\vep)/\vep)^{\nu/2}/\sqrt\vep, (\Delta(\vep)/\vep)^{\nu_f/2}/\mdpsc\ri\} \rt 0,$$ 
 	as $\vep \rt 0$ (thus, in particular, if $(\Delta(\vep)/\vep)^{\tilde \nu/2}/\sqrt \vep \rt 0$, where $\tilde \nu = \nu \wedge \nu_f$); OR,
 	\item Assumption \ref{assum-f-2}-(B) holds with $p_0' \leq \alpha$, and $\Delta(\vep)$ is chosen such that 
 	$(\Delta(\vep)/\vep)^{\nu/2}/\sqrt\vep \rt 0$ as $\vep \rt 0$.
 	
 \end{itemize}

Here for a set $A$, $I_f(A) = \inf_{x\in A}I_f(x)$.

\end{theorem}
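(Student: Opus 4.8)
The plan is to use the weak convergence / variational approach to large deviations, following Budhiraja--Dupuis. The starting point is the variational representation of $-\mdpscb \log \EE \exp(-F(\Up_\vep(f))/\mdpscb)$ for bounded continuous $F$ on $C([0,T],\R^n)$ as an infimum over controls $v$ of an expectation involving the controlled process $\bar Z^\vep$, driven by $W + \mdpscb^{-1/2}\int_0^\cdot v(s)\,ds$ (this is the content of Section \ref{sec:var}). To show the Laplace principle, which is equivalent to the claimed LDP, I would establish: (a) the lower bound (corresponding to the closed-set/Laplace upper bound) by taking a near-optimal control, proving tightness of the pair (controlled process occupation measure, control) and identifying the limit, and (b) the upper bound (open-set bound) by starting from a control achieving the infimum in the limiting variational problem for $I_f$ and constructing an admissible near-optimal control for each $\vep$. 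The equivalent forms of the rate function from Section \ref{sec:eq-rate} will be used to match the variational limit with $I_f$ in \eqref{rate2}.

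The central analytic device is the solution $u$ of the Poisson equation $\SC{L}u = -f$ from Section \ref{sec:poi}: applying It\^o's formula to $u(t, Z^\vep(t))$ (resp. to the controlled process), the functional $\Up_\vep(f)$ is rewritten modulo negligible terms as a stochastic integral $\mdpsc^{-1}\sqrt\vep \int_0^\cdot Du(s,Z^\vep(s))\s(Z^\vep(s))\,dW(s)$ plus a drift correction from the time-dependence $\partial_t u$ and a discretization error coming from evaluating the coefficients at $\vr_\vep(s)$ rather than $s$. The key steps are then: first, control this discretization error — here the hypothesis $(\Delta(\vep)/\vep)^{\nu/2}/\sqrt\vep \to 0$ together with the H\"older assumptions (Condition \ref{cond_SDE_coef-ex}) and the polynomial growth bounds on $u, Du, D^2u$ (Assumption \ref{assum-exp-u}) are exactly what make it $o(\sqrt{\mdpscii})$, i.e. negligible on the MDP scale; second, establish a law of large numbers / ergodic averaging so that the quadratic variation $\frac{\vep}{\mdpscii}\int_0^t Du\,\s\s^T\,Du^T\,ds$ converges (in the appropriate sense, using the exponential mixing bound stated after Condition \ref{cond_SDE_inv} and the occupation-measure convergence $\Xi_\vep/t \to \pi$) to $\int_0^t M_f(s)\,ds$ — one checks that the limiting covariance rate is indeed $M_f$ via the identity $M_f = \int (Du\,a\,Du^T + \text{sym}) d\pi$ rephrased through $\int_0^\infty P_s f\,ds = u$; third, for the controlled process, show that the same estimates survive the addition of the control (using the moment bounds on the controls forced by the cost term and the recurrence from Condition \ref{cond_SDE_inv}(i) with $\alpha > 0$, which is why $\alpha>0$ is required here), giving tightness of $(\bar Z^\vep$-occupation measure, $\int v\,ds)$ and a limit of the form $\dot\xi(s) = M_f^{1/2}(s)\,w(s)$ with cost $\frac12\int \|w\|^2$; optimizing over $w$ subject to $M_f^{1/2} w = \dot\xi$ yields $I_f$.

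Finally, the Riemann-sum variant $\Xi^R_\vep(f)/\mdpsc$ is handled by the integral representation \eqref{est-Riem}: the difference $\Xi^R_\vep(f) - \Xi_\vep(f) = \int_0^\cdot [f(\vr_\vep(s),Z^\vep(\vr_\vep(s))) - f(s,Z^\vep(s))]\,ds$ is shown to be $o(\mdpsc)$ uniformly (in probability, and with the exponential tightness needed for transferring the LDP, i.e. the two processes are exponentially equivalent at speed $\mdpscb$). Under Assumption \ref{assum-f-2}-(A) this uses H\"older continuity of $f$ in both variables plus the modulus of continuity of $Z^\vep$ (hence the condition $(\Delta/\vep)^{\nu_f/2}/\mdpsc \to 0$ alongside $(\Delta/\vep)^{\nu/2}/\sqrt\vep \to 0$); under (B) it uses the mean value theorem and the growth bound on $Df$ with $p_0' \le \alpha$ so that the relevant moments are controlled by the recurrence. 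I expect the main obstacle to be step (a)'s tightness and limit identification for the \emph{controlled} process: one must show that the Poisson-equation machinery and the ergodic averaging are robust to an unbounded (only $L^2$-in-cost) perturbation of the noise, which requires careful a priori moment estimates on $\bar Z^\vep$ uniform in $\vep$ and in the control — precisely the estimates assembled in Section \ref{sec:est} and Section \ref{sec:tight}, and the reason the growth restrictions in Assumption \ref{assum-exp-u} (not needed for the CLT) are imposed.
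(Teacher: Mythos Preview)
Your overall strategy --- variational representation, Poisson equation plus It\^o decomposition, tightness of the controlled occupation measure, and identification of the limit via the equivalent forms of $I_f$ from Section~\ref{sec:eq-rate} --- is exactly the paper's approach, and your diagnosis of why $\alpha>0$ and Assumption~\ref{assum-exp-u} are needed (uniform moment control for the \emph{controlled} process) is correct. Two technical points, however, are handled differently in the paper and would cause trouble as you have written them.

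First, you propose to apply It\^o's formula to $u(t,\bar Z_\vep(t))$ and treat the time-dependence as a ``drift correction from $\partial_t u$''. The hypotheses do not give $u\in C^1$ in $t$: Condition~\ref{u-reg-assum}(iii)--(iv) only bound the \emph{modulus of continuity} $\omega_u(\Delta,x),\ \omega_{\nabla u}(\Delta,x)$ by $\const_1(T)\modu(\Delta)(1+\|x\|)^{q_i}$. The paper therefore does \emph{not} apply a full $(t,x)$-It\^o formula. Instead it introduces an auxiliary coarser partition $\{\tilde t_k\}$ with mesh $\tilde\Delta=\vep$, freezes the time argument of $u$ at $\tilde t_k$ on each subinterval, applies It\^o--Krylov in $x$ only, and then sums (see \eqref{u-Ito-0}--\eqref{u-Ito}). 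The time-increment term becomes $\SC E^\vep_1(t)=\frac{\vep}{\mdpsc}\sum_k\big(u(\tilde t_{k+1},\bar Z_\vep(\tilde t_{k+1}))-u(\tilde t_k,\bar Z_\vep(\tilde t_{k+1}))\big)$, which is controlled by Condition~\ref{u-reg-assum}(iii) together with the integral moment bound for $\bar Z_\vep\circ\eta_\vep$ (Corollary~\ref{dis-mmt-bd-ctrl} / Remark~\ref{int-disc-est-1}); similarly $\SC E^\vep_3$ uses Condition~\ref{u-reg-assum}(iv). This two-scale discretization ($\vr_\vep$ with mesh $\Delta(\vep)$ for the Euler scheme, $\eta_\vep$ with mesh $\vep$ for the Poisson-equation bookkeeping) is the device that replaces your $\partial_t u$ term, and it is essential under the stated assumptions.

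Second, for the Riemann-sum version $\Xi^R_\vep(f)/\mdpsc$ you invoke \emph{exponential equivalence} with $\Up_\vep(f)$ at speed $\mdpscb$. That would require superexponential tail bounds on $\sup_t|\Xi^R_\vep(f)(t)-\Xi_\vep(f)(t)|/\mdpsc$, which in turn needs exponential moment control of $\|Z^\vep(s)-Z^\vep(\vr_\vep(s))\|$ uniformly in $\vep$ --- not something the paper establishes (only polynomial moments, Lemma~\ref{Z-proc-mmt-bd}). The paper avoids this entirely: it stays inside the variational framework and shows that, for the \emph{controlled} process, the additional error $\tilde{\SC E}^\vep_2(t)=\frac{1}{\mdpsc}\int_0^t\big(f(\eta_\vep(s),\bar Z_\vep(s))-f(\vr_\vep(s),\bar Z_\vep(\vr_\vep(s)))\big)\,ds$ vanishes in $L^1$ uniformly over controls in $\SC P_2^M$ (end of the proof of Proposition~\ref{prop-tight}). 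Both Laplace bounds for $\Xi^R_\vep(f)/\mdpsc$ then follow from the same argument as for $\Up_\vep(f)$, with this extra error absorbed. This is strictly easier than exponential equivalence and is what the step-size conditions in the ``Moreover'' clause are calibrated to.
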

		
To prove the above theorem we will actually prove the {\em Laplace principle} which is equivalent to proving LDP \cite[Section 1.2]{DE97}. In other words, we will show that for all $F \in C^1_b(C([0,T]:\R^n))$

\begin{align}\label{Lap-P}
	\lim_{\vep \rt 0}  \mdpscb \ln \EE \lf[\exp\Big(-F(\Up_\vep(f))/\mdpscb \Big)\ri] = - \inf_{\xi \in C([0,T], \R^d)} [I(\xi) + F(\xi)].
\end{align} 		

Some remarks are now in order.

\begin{remark}		\label{rem-MDPZ-1}
	{\rm The following observations and comments are clear from the proofs of the above theorems.
	\begin{itemize}

\item To simplify the notations a bit in the proof, we assumed that $b$ and $\s$ have same H\"older exponent $\nu$. Of course, for many stochastic models, this might not be true. If $b$ and $\s$ are  H\"older  continuous with  H\"older exponents $\nu_b$ and $\nu_\s$ respectively, then for the above results to hold  the discretization step $\Delta(\vep)$ needs to be chosen such that $(\Delta(\vep)/\vep)^{\nu/2}/\sqrt \vep \rt 0$ with $\nu = \nu_b\wedge \nu_\s$. 

\item If $\s(x) \equiv \s$ (a constant), then $\nu \equiv \nu_b$, and for the MDP result to hold, we only need $(\Delta(\vep)/\vep)^{\nu/2}/\mdpsc \rt 0$. This means that the discretization steps can be chosen slightly bigger. Also, in this case, the assumptions on growth of $D^2u$ (Condition \ref{u-reg-assum}-(v) and Assumption \ref{assum-exp-u}-(iv)) are not needed.   
	
\item Finally, a rather obvious comment is that if we are only considering homogeneous functionals, that is, we assume $f$ is only a function of $x$ and not of $t$, then the assumption on moduli of continuity of $f$, $u$ and $Du$ are not needed. In other words, for MDP of $\int_0^\cdot f(Z^\vep(s))ds$, $q_0$ in  Assumption \ref{assum-f}-(ii) and $q_1, q_2$ in (iii) -(iv) of Condition \ref{u-reg-assum} can be assumed to be $0$.

\end{itemize}

}
\end{remark}

\np
As mentioned, not surprisingly, the same techniques prove a moderate deviation principle of the inhomogeneous functionals of the original process $X^\vep$ under less restrictive conditions.  Indeed, some of the estimates that are essential for study of MDP for $\Xi_\vep(f)$ do not come up while considering the case of $\G_\vep(f)$, defined by
$$\Gamma_\vep(f) = \int_{\R^d\times [0,\cdot]}f(s,x)\Gamma_\vep(dx\times ds)=   \int_0^{\cdot} f(s, X^\vep(s))ds.$$
Some assumptions can be removed (including H\"older continuity of $b$ and $\s$, provided existence and uniqueness of solution $X$ are available), and some complex arguments could be simplified as a result. 

\begin{theorem}\label{th:mdpX} Let  $f :[0,\infty)\times\R^d \rt \R^n$ satisfy Assumption \ref{assum-f}  with $\modu(\Delta) = O(\sqrt \Delta)$. Let $X^\vep$ be the unique solution to \eqref{SDE1} Then under Condition \ref{cond_SDE_inv} (with $\alpha>0$), (i) - (iv) of Condition \ref{u-reg-assum}, and (i) - (iii) of Assumption \ref{assum-exp-u}, as $\vep\rt 0$, $\lf\{U_\vep(f)\equiv \f{1}{\mdpsc}\Gamma_\vep(f) =\f{1}{\mdpsc}\int_0^{\cdot} f(s, X^\vep(s))ds\ri\}$ satisfies a LDP on $C([0,T],\R^n)$ with speed $\mdpscb  \equiv \vep/\mdpscii$ and rate function $I_f$ given by \eqref{rate2}.
	\end{theorem}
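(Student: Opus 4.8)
Since the theorem is stated without proof, the following is the plan I would follow. The strategy is to establish the equivalent Laplace principle \eqref{Lap-P}, with $\Up_\vep(f)$ and $I$ there replaced by $U_\vep(f)=\f{1}{\mdpsc}\Gamma_\vep(f)$ and $I_f$, by the weak convergence method; this is the argument carried out for Theorem \ref{th:mdpZ} with all Euler-discretization estimates removed, which is why only (i)--(iv) of Condition \ref{u-reg-assum} and (i)--(iii) of Assumption \ref{assum-exp-u} are needed. The starting point is the Poisson equation: under Condition \ref{cond_SDE_inv}, Condition \ref{u-reg-assum}(i)--(iv) and Proposition \ref{u-est}, for each $t$ there is a solution $u(t,\cdot)$ of $\SC{L}u(t,\cdot)=-f(t,\cdot)$ with $u$, $Du$, $\partial_t u$ of polynomial growth. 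Since the generator of $X^\vep$ is $\vep^{-1}\SC{L}$, Itô's formula applied to $u(t,X^\vep(t))$ gives, with $G(s,x)\doteq Du(s,x)\s(x)$ and $\mdpscb=\vep/\mdpscii$,
\begin{align*}
U_\vep(f)=R_\vep+\sqrt{\mdpscb}\int_0^\cdot G(s,X^\vep(s))\,dW(s),\quad R_\vep\doteq\f{\vep}{\mdpsc}\lf(u(0,x_0)-u(\cdot,X^\vep(\cdot))+\int_0^\cdot\partial_s u(s,X^\vep(s))\,ds\ri).
\end{align*}
A standard computation with the Poisson equation ($\int\SC{L}(u_i u_j)\,d\pi=0$) produces the key identity $\int_{\R^d}G(s,x)G(s,x)^T\pi(dx)=\int_{\R^d}Du(s,x)a(x)Du(s,x)^T\pi(dx)=M_f(s)$, with $M_f$ as in \eqref{M-mat-def}.

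Next I would invoke the variational representation for exponential functionals of Brownian motion at speed $\mdpscb$ (see Section \ref{sec:var}): for $F\in C^1_b(C([0,T]:\R^n))$,
\begin{align*}
-\mdpscb\ln\EE\lf[\exp\lf(-F(U_\vep(f))/\mdpscb\ri)\ri]=\inf_{\phi}\EE\lf[\f{1}{2}\int_0^T\|\phi(s)\|^2\,ds+F\lf(\bar U_\vep^\phi(f)\ri)\ri],
\end{align*}
where $\bar U_\vep^\phi(f)$ is built from the controlled diffusion $\bar X^\vep$ obtained by replacing $W$ in \eqref{SDE1} by $W+\mdpscb^{-1/2}\int_0^\cdot\phi(s)\,ds$, i.e.\ by adding the drift $(\mdpsc/\vep)\s(\bar X^\vep)\phi$ to \eqref{SDE1}; a standard localization lets one restrict to controls with $\int_0^T\|\phi(s)\|^2\,ds\le M$ a.s. Repeating the Itô computation for $\bar X^\vep$ gives the controlled decomposition
\begin{align*}
\bar U_\vep^\phi(f)=\bar R_\vep+\sqrt{\mdpscb}\int_0^\cdot G(s,\bar X^\vep(s))\,dW(s)+\int_0^\cdot G(s,\bar X^\vep(s))\,\phi(s)\,ds,
\end{align*}
$\bar R_\vep$ being the controlled analogue of $R_\vep$.

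The technical core, which I expect to be the main obstacle, is to show that $\sup_{t\le T}\|\bar X^\vep(t)\|$ is bounded in probability as $\vep\rt 0$ \emph{uniformly over the admissible controls}: although the control drift carries the large prefactor $\mdpsc/\vep$, its $L^1([0,T])$ size relative to the recurrence drift is only of order $\mdpsc\rt 0$ (since $\int_0^T\|\phi\|\le\sqrt{TM}$), so the Lyapunov/comparison estimates behind Condition \ref{cond_SDE_inv}(i) with $\alpha>0$ still confine $\bar X^\vep$ — this is exactly where $\alpha>0$ is used, and it is the analogue of the estimates of Section \ref{sec:est}. Granting this, Assumption \ref{assum-exp-u}(i)--(iii) bounds the growth of $u$, $Du$, $\partial_t u$ and forces $\bar R_\vep\rt 0$, while $\sqrt{\mdpscb}\int_0^\cdot G(s,\bar X^\vep)\,dW$ has quadratic variation of order $\mdpscb\rt 0$ and hence also tends to $0$, both uniformly on $[0,T]$ in probability. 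For $\psi\in C^2_c(\R^d)$, multiplying Itô's formula for $\psi(\bar X^\vep)$ by $\vep$ gives $\int_0^t\SC{L}\psi(\bar X^\vep(s))\,ds=\vep(\psi(\bar X^\vep(t))-\psi(x_0))-\mdpsc\int_0^t D\psi\,\s\,\phi\,ds-\sqrt\vep\int_0^t D\psi\,\s\,dW$, whose right side tends to $0$, so the occupation measure of $\bar X^\vep$ still converges to $\pi\otimes ds$ despite the control (again the control contributes the vanishing factor $\mdpsc$). Then, introducing the random occupation measures $\bar\varrho_\vep(dy\,dz\,ds)=\delta_{\phi(s)}(dy)\,\delta_{\bar X^\vep(s)}(dz)\,ds$ on $\R^m\times\R^d\times[0,T]$, I would establish tightness of $\{\bar\varrho_\vep\}$ (with uniformly integrable second moment in $y$ and the $z$-moments controlled by the previous estimate) and show that along any weak limit $\bar\varrho(dy\,dz\,ds)=\bar\nu_s(dy\,dz)\,ds$ — whose $z$-marginal is $\pi$ for a.e.\ $s$, by the identity just displayed and uniqueness of the invariant measure — one has $\bar U_\vep^\phi(f)\RT\xi$ with $\dot\xi(s)=\int_{\R^m\times\R^d}G(s,z)\,y\,\bar\nu_s(dy\,dz)$ and $\liminf_\vep\EE[\f{1}{2}\int_0^T\|\phi\|^2\,ds]\ge\f{1}{2}\int_0^T(\int\|y\|^2\bar\nu_s(dy\,dz))\,ds$.

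Finally, the two Laplace bounds. For the upper ($\limsup$) bound I would combine the last display with continuity of $F$ and minimize, for each fixed $s$, $\int\|y\|^2\bar\nu_s(dy\,dz)$ over measures with $z$-marginal $\pi$ and $\int G(s,z)\,y\,\bar\nu_s(dy\,dz)=\dot\xi(s)$; this convex problem has value $\dot\xi(s)^TM_f(s)^{-1}\dot\xi(s)$, attained by the linear feedback $y=G(s,z)^TM_f(s)^{-1}\dot\xi(s)$ (using $\int GG^T\,d\pi=M_f$), which yields $\liminf_\vep(-\mdpscb\ln\EE[\exp(-F(U_\vep(f))/\mdpscb)])\ge\inf_{\xi}[I_f(\xi)+F(\xi)]$ with $I_f$ as in \eqref{rate2}. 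For the lower ($\liminf$) bound, given an absolutely continuous $\xi$ with $I_f(\xi)<\infty$, I would plug the near-optimal feedback $\phi_\vep(s)=G(s,\bar X^\vep(s))^TM_f(s)^{-1}\dot\xi(s)$ (suitably mollified and truncated) into the representation, use the estimates above to check $\bar U_\vep^{\phi_\vep}(f)\RT\xi$ and $\f{1}{2}\int_0^T\|\phi_\vep\|^2\,ds\rt I_f(\xi)$, and obtain the matching inequality. Positive definiteness of $M_f(s)$, which follows from the uniform ellipticity $a\ge\l_1 I$ in Condition \ref{cond_SDE_inv}(ii) together with non-degeneracy of $Du(s,\cdot)$, makes $I_f$ a genuine rate function with compact sublevel sets, so \eqref{Lap-P} holds and $\{U_\vep(f)\}$ satisfies the asserted LDP, i.e.\ $\{\Gamma_\vep(f)\}$ satisfies the stated MDP.
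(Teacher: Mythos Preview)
Your overall strategy is exactly the paper's: it is the proof of Theorem \ref{th:mdpZ} with the discretization-error terms stripped out (hence no need for Condition \ref{u-reg-assum}(v) or Assumption \ref{assum-exp-u}(iv), and no H\"older assumption on $b,\s$). Two steps, however, are stated in a form that would not go through under the hypotheses.

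First, your It\^o decomposition invokes $\partial_s u(s,X^\vep(s))$, but the theorem only assumes the modulus-of-continuity bounds (iii)--(iv) of Condition \ref{u-reg-assum}; $u$ is not assumed $C^1$ in $t$. The paper's device (see \eqref{u-Ito-0}--\eqref{u-Ito}) is to take a coarser partition $\{\tilde t_k\}$ of mesh $\tilde\Delta=\vep$, apply It\^o--Krylov to $u(\tilde t_k,\cdot)$ on each $[\tilde t_k,\tilde t_{k+1}]$, and absorb the time-increments into error terms $\SC{E}^\vep_1,\SC{E}^\vep_2$ controlled by $\omega_u,\omega_f$; with $\modu(\Delta)=O(\sqrt\Delta)$ and Assumption \ref{assum-exp-u}(iii) these vanish.

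Second, the assertion that $\sup_{t\le T}\|\bar X^\vep(t)\|$ is bounded in probability uniformly over controls is too strong and in general false: already for the uncontrolled process, $X^\vep(\cdot)$ has the law of $X(\cdot/\vep)$, so its supremum over $[0,T]$ diverges as $\vep\rt 0$. What one actually proves --- and what suffices --- are the (easier) analogues for $\bar X^\vep$ of Lemma \ref{ctrlproc-bd-2} and Corollary \ref{mmt-bd-ctrl}: $\sup_{\vep,\psi}\EE\int_0^T\|\bar X^\vep(s)\|^{2\alpha}ds<\infty$ and $\sup_{\vep,\psi}\vep\,\EE\sup_{t\le T}\|\bar X^\vep(t)\|^{1+\alpha}<\infty$. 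The boundary term $(\vep/\mdpsc)u(t,\bar X^\vep(t))$ is then killed by combining $p_1\le(1+\alpha)/2$ with the second bound exactly as in \eqref{conv-u}, while the integral bound gives occupation-measure tightness, controls the martingale quadratic variation, and --- via the \emph{strict} inequality $p_2<\alpha$ when $\alpha\le 1$ in Assumption \ref{assum-exp-u}(ii) --- yields the equicontinuity of $\int_0^\cdot G(s,\bar X^\vep(s))\psi(s)\,ds$ needed for tightness of $\bar U^\psi_\vep(f)$ (cf.\ the $\bar\Lambda_\vep$ estimate in the proof of Proposition \ref{prop-tight}). With these two corrections your plan coincides with the paper's.
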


\subsection{Variational representation and controlled processes} \label{sec:var}
Here, we briefly describe the result on variational representation of expectations of exponential functionals of $\Up_\vep(f)$ and the control process associated with $Z^\vep$. These form the backbone of a weak convergence approach to large deviation asymptotics.

Let $\mathcal{P}$ denote the predictable $\sigma$-field on $[0,T]\times\Omega$
associated with the filtration $\left\{  \mathcal{F}_{t}:0\leq t\leq
T\right\}$.
Let 
\begin{align*}
	P^M_2 \equiv \{h: [0,T] \rt \R^m:  \int_0^T \|h(s)\|^2ds \leq M\},
\end{align*}	
and 
\begin{align*}
  \SC{P}_2^M \equiv \left\{  \psi:\psi\text{ is }\mathcal{P}%
  \backslash\mathcal{B}(\mathbb{R}^{m})\text{ measurable and }\psi\in P_{2}%
  ^{M}\text{, a.s. }\mathbb{P}\right\}  ,\quad\mathcal{P}_{2}\doteq\cup
  _{M=1}^{\infty}\mathcal{P}_{2}^{M},
  \end{align*}
  Then  by the variational representation  and an application of Girsanov's theorem \cite{BoDu98, BDM08},
  \begin{align}\label{vrep}
  	-\mdpscb \ln \EE \lf[\exp\Big(-F(\Up_\vep(f))/\mdpscb \Big)\ri] = \inf_{\psi \in \SC{P}_2}\EE\lf\{\f{1}{2} \int_0^T \|\psi(s)\|^2 ds + F(\bar \Up^{\psi}_\vep(f))\ri\},
  \end{align}	
  where $\bar \Up^{\psi}_\vep(f)(t) = \f{1}{\mdpsc} \int_0^t f(s,\bar Z^\psi_\vep (s))ds$ and $\bar Z^\psi_\vep$ solves the controlled stochastic equation:
  \begin{align}\non
  	\bar Z^\psi_\vep(t) =&\ x_0 + \f{1}{\vep}\int_0^t b(	\bar Z^\psi_\vep(\vr_\vep(s)))ds+  \f{1}{\sqrt \vep}\int_0^t\s(\bar Z^\psi_\vep(\vr_\vep(s))) dW(s) \\ \label{ctrl-EuSDE1}
  	& \ + \f{\mdpsc}{\vep} \int_0^t \s(	\bar Z^\psi_\vep(\vr_\vep(s)))\psi(s) ds.
  \end{align}
 Similarly,  
  \begin{align*}
  	-\mdpscb \ln \EE \lf[\exp\Big(-F(\Xi^{R,\psi}_\vep(f)/\mdpsc)/\mdpscb \Big)\ri] = \inf_{\psi}\EE\lf\{\f{1}{2} \int_0^T \|\psi(s)\|^2 ds + F(\bar\Xi^{R}_\vep(f)/\mdpsc)\ri\},
  \end{align*}	
  where $\bar\Xi^{R,\psi}_\vep(f)(t) = \int_0^t f(\vr_\vep(s),\bar Z^\psi_\vep (\vr_\vep(s)))ds.$

Since $P_{2}^{M}$ is a closed ball in $L^{2}([0,T])$, it is compact under the
weak topology, which is metrizable, and throughout the paper, this topology will be used on $P_{2}^{M}$. The overbar on a process will denote its controlled version, and for most part of the paper,  superscripts like $\psi$ will be dropped from the notation of the controlled process, for  convenience.

 \subsection{Poisson equation} \label{sec:poi}
  Fix $t>0$. For each $l=1,2,\hdots, n,$ we consider the Poisson equation
\begin{align}\label{Pois-1}
\SC{L}u_l(t,\cdot)(x) = -f_l(t,x),
\end{align}
where $f = (f_1,f_2,\hdots,f_n)$ and $\SC{L}$, defined by \eqref{gen-diff}, is generator of the diffusion process $X$.
From \cite[Theorems 1, 2]{PaVe01}, under Condition \ref{cond_SDE_inv}, for each $l$ and $t>0$,  \eqref{Pois-1} admits a unique solution $u_l(t,\cdot)$ in the class of functions belonging to $W^{2,p}_{loc}$ for any $p>1$. $u_l(t,\cdot)$ is given by \begin{align}\label{u-form}
u_l(t,x) = \int_0^\infty P_sf_l(t,\cdot)(x) ds = \int_0^{\infty} \int_{\R^d} f_l(t,y)P_s(x,dy)ds,
\end{align}
where recall that $P_s(x,dy)$ denotes the transition kernel of the diffusion process $X$ given by \eqref{SDE0}, and by a slight abuse of notation, $P_s$ is also used to denote the corresponding semigroup. \\

The central idea is to show that the integral in \eqref{u-form} is convergent, and $u$ defined by \eqref{u-form} is continuous and does not increase rapidly to infinity. Also, note that by choosing $p>d$ and using Sobolev embedding theorem \cite[Section 7.7]{GiTu01}, it follows that for each $t>0$, $Du(t, \cdot)$ is continuous. Moreover, if we assume that the coefficients $b$ and $a$ are $C^1$, $f$ is (weakly) differentiable and  $\sup_{t\leq T}\|Df(t,x)\| \leq \const(T) (1+\|x\|)^{p_0'}$ for some $p_0' \in \R$, then by \cite[Theorem 9.19]{GiTu01}, it follows that $f \in W^{3,p}_{loc}$ for all $p>1$. As before, choosing $p>d$ and using Sobolev embedding theorem, it  now follows that $D^2u(t,\cdot)$ is continuous. \\

We will make the following assumptions on regularity of $u$.

\begin{condition}\label{u-reg-assum}
There exist a constant $\const_1(T)$ and exponents $p_1, p_2, p_3, q_1$ and $q_2$ such that for each $l=1,2,\hdots,n$, the following estimates hold:	
\begin{enumerate}[(i)]
\item $\dst \sup_{t\leq T}\|u_l(t,x)\|  \leq \ \const_1(T)\lf(1+\|x\|\ri)^{p_1}$,
\item $\dst \sup_{t\leq T}\|\nabla u_l(t,x)\|  \leq \ \const_1(T)\lf(1+\|x\|\ri)^{p_2}$,
\item $\dst  \omega_{u_l}(\Delta,x) \doteq \ \sup_{\{|t-s|\leq \Delta, \ 0\leq s,t \leq T\}}\|u_l(t,x) - u_l(s,x)\| \leq\ \const_1(T) \modu(\Delta) (1+\|x\|)^{q_1}$,
\item $\dst \omega_{\nabla u_l}(\Delta,x) \doteq \ \sup_{\{|t-s|\leq \Delta, \ 0\leq s,t \leq T\}}\|\nabla u_l(t,x) - \nabla u_l(s,x)\| \leq\ \const_1(T) \modu(\Delta) (1+\|x\|)^{q_2}$,
\item $\dst \sup_{t\leq T}\|D^2 u_l(t,x)\|  \leq \ \const_1(T)\lf(1+\|x\|\ri)^{p_3}.$
\end{enumerate}
\end{condition}


%

\begin{assumption}\label{assum-exp-u} The exponents in Condition \ref{u-reg-assum} satisfy the following bounds:
\begin{align*}
&(i) \ p_1 \leq (1+\alpha)/2, \qquad (ii) \ p_2<\alpha \mbox{ if } \alpha \leq 1, \mbox{ and } \ p_2\leq (1+\alpha)/2   \mbox{ if }  \alpha>1, \\
 &(iii)\ \max\{q_0/2, q_2\} \leq \alpha,  \quad  q_1 \leq 2\alpha1_{\{\alpha \leq 1\}}+\alpha1_{\{\alpha > 1\}}, \qquad (iv)\ p_3 \leq \alpha.
\end{align*}
%
\end{assumption}

\np
For some models, the solution $u$ can be computed directly and the above assumptions can be directly checked. 

\begin{example}{\rm
Consider the following 1-dimensional SDE:
\begin{align*}
X(t) = x_0+ \int_0^t b(X(s))ds + W(t), \quad x_0 \in \R,
\end{align*}
where $xb(x) = -|x|^{1+\alpha}$. Then clearly, $|b(x)| \sim |x|^{\alpha}$. Let $\pi$ denote the invariant probability measure. Let $f(x) = -b(x)$ (notice that $\int_{\R}b(x)\pi(dx) = 0$). Then $u(x) = x$ and clearly, if $\alpha \geq 1$, Assumption \ref{assum-exp-u} holds.
}
\end{example}

However, in most models, a closed form expression of the Poisson equation is not available, and a general result describing the different exponents of Condition  \ref{cond_SDE_coef-ex} is needed. Toward this end, \cite[Theorem 2]{PaVe01} could be useful.

\begin{remark} \label{Ver-rem}

{\rm However, as mentioned in the introduction, we do note that the proof of  the estimate on the growth rate of $Du$, \cite[Theorem 2, eq. (21)]{PaVe01}, requires the drift $b$ to be globally bounded. This is not explicitly mentioned  in \cite{PaVe01}, where $b$ is said to be locally bounded (although in the statement of Theorem 1 of \cite{PaVe01}, it did mention once that the constant depends on $\sup_{i,x}|b_i(x)|$). To see why this is indeed the case, first observe that
 the proof uses the result on interior $L^p$-estimates of solutions of the elliptic equation from Gilbarg and Trudinger \cite[Theorem 9.1]{GiTu01}. However, the constant in this result depends on the bounds of the coefficients, $b$ and $a$, in the domain of interest, $\Om$. The coefficient $a$ is assumed to be bounded, but the drift term $b$ in most examples will be not. More specifically, since the domain $\Om = B(x,1)$ in the part (e) of proof of  \cite[Theorem 9.1]{GiTu01}, the constant $C$ in \cite[Eq. (9.4)]{GiTu01}, and hence the constant $C'$ in the first display of \cite[Page 1070]{PaVe01} will actually depend on $x$. For example, for Ornstein-Uhlenbeck SDE, where $b(x) \sim -x$, it is not hard to see following the chain of arguments leading to \cite[Eq. (9.4)]{GiTu01} that this particular $C \sim x^2$.  This affects the growth rate of the gradient of the solution $u$ in \cite[Theorem 2, eq. (21)]{PaVe01}.
 
 The statement as stated in \cite[Theorem 2, eq. (21)]{PaVe01} might still be true for more general $b$, but unfortunately, we cannot find a way to adapt the proof given by Pardoux and Veretennikov or find an alternate proof -- except in one-dimension. For one-dimensional SDEs, the original statement of \cite{PaVe01} (at least, a very similar one) is indeed true, and we were able to find an alternate way to prove it.  For multi-dimensional SDEs, through a closer inspection of the proof of \cite[Theorem 9.1]{GiTu01}, we were able to give a modified statement where the growth rate of $Du$ needed to be changed.

 This modified statement is the content of Proposition \ref{u-est} below. Just like the techniques used in \cite{PaVe01}, its proof relies on \cite[Theorem 9.1]{GiTu01}, or more specifically, a version of it.  This  version, under the growth condition of the coefficients of $\SC{L}$ (c.f. Condition \ref{cond_SDE_coef-ex}), provides a more closer look into the $L^p$-estimate of the solution $u$, which is needed in our paper. For sake of completeness the proof of this version of \cite[Theorem 9.1]{GiTu01} is presented in  Lemma \ref{Lemma-D2v} in the Appendix.

}
\end{remark}


\begin{proposition}\label{u-est} Suppose that Condition \ref{cond_SDE_inv}, Assumption \ref{assum-f}, and Condition \ref{cond_SDE_coef-ex} hold.
Then $u\in C^1(\R^d,\R^n)$, and for each $l=1,2,\hdots,n$, (i) - (iv) of Condition \ref{u-reg-assum} hold, with
the following relations between the exponents:
\begin{align*}
& p_1 = (p_0-\alpha+1)^+,\quad p_2 = \max\{p_1+2\bar\alpha,  p_0\}, \quad  q_1 = (q_0-\alpha+1)^+, \quad q_2 = \max\{q_1+2\bar\alpha), q_0\}.
\end{align*}
Here $p_0$ and $q_0$ are as in Assumption \ref{assum-f}. 

Furthermore, assume that $b$ and $a$ are in $C^1(\R^d)$, $\|D^2a\|_\infty < \infty$, $\|Db\|\leq   \cnst (1+ \|x\|^{\bar \alpha})$, $f$ is (weakly) differentiable and  $\sup_{t\leq T}\|Df(t,x)\| \leq \const(T) (1+\|x\|)^{p_0'}$ for some $p_0' \in \R$ and some constant $\const(T)>0$. Then $u \in C^2(\R^d,\R^n)$, and Condition \ref{u-reg-assum}-(v) also holds with $$p_3 =\max \{p_0+2\bar\alpha, p_1+4\bar \alpha\}.$$
\end{proposition}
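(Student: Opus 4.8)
The plan is to start from the representation \eqref{u-form}, combined with the fact recalled just before Condition \ref{u-reg-assum} that for each $t>0$ and each $l$ the function $u_l(t,\cdot)$ lies in $W^{2,p}_{loc}$ for every $p>1$ and solves \eqref{Pois-1}; taking $p>d$ and using the Sobolev embedding $W^{2,p}\hookrightarrow C^1$ gives $u_l(t,\cdot)\in C^1$ at once, hence $u\in C^1(\R^d,\R^n)$. The growth bound (i) is essentially \cite[Theorem 1]{PaVe01}: with $V(x)=(1+\|x\|^2)^{p_1/2}$ and $p_1=(p_0-\alpha+1)^+$, Condition \ref{cond_SDE_inv}(i) yields $\SC{L}V(x)\le -c(1+\|x\|)^{p_0}+C$ outside a large ball, and combining this with the uniform-in-time polynomial moment estimates for $X$ guaranteed by Condition \ref{cond_SDE_inv}, a verification argument applied to \eqref{u-form} gives $\sup_{t\le T}\|u_l(t,x)\|\le C(1+\|x\|)^{p_1}$ (with $p_1=0$ corresponding to a bounded $u_l$); I would only reproduce the points where the polynomial, rather than exponential, recurrence rate enters.

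The crux, and the reason the proposition is not a verbatim citation of \cite[Theorem 2]{PaVe01}, is the bound (ii) on $\nabla u_l$, for which $b$ cannot be assumed bounded. Fixing $x$, on $B(x,1)$ equation \eqref{Pois-1} reads $\tfrac12\sum_{ij}a_{ij}\partial_{ij}u_l+\sum_i b_i\partial_i u_l=-f_l$, where $a$ is uniformly elliptic with $x$-independent bounds by Condition \ref{cond_SDE_inv}(ii) and uniformly H\"older continuous by Condition \ref{cond_SDE_coef-ex}(i), while $\|b\|_{L^\infty(B(x,1))}\le C(1+\|x\|)^{\bar\alpha}$ by Condition \ref{cond_SDE_coef-ex}(ii). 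The version of \cite[Theorem 9.1]{GiTu01} established in Lemma \ref{Lemma-D2v} records that, in the interior $L^p$-estimate, the drift enters the constant only through the factor $1+\|b\|_{L^\infty}^2$ — this is the outcome of the standard iteration in which the term $\|b\|_\infty\|\nabla u_l\|_{L^p}$ is absorbed using $\|\nabla w\|_{L^p}\le\eta\|D^2w\|_{L^p}+C\eta^{-1}\|w\|_{L^p}$ with $\eta\sim\|b\|_\infty^{-1}$, while the forcing term $f_l$ keeps an $x$-independent constant. Hence on $B(x,1/2)$ one gets $\|u_l(t,\cdot)\|_{W^{2,p}(B(x,1/2))}\le C\big((1+\|x\|)^{2\bar\alpha}\|u_l(t,\cdot)\|_{L^p(B(x,1))}+\|f_l(t,\cdot)\|_{L^p(B(x,1))}\big)$, and inserting (i), Assumption \ref{assum-f}(i) and the Sobolev embedding ($p>d$) gives $\sup_{t\le T}\|\nabla u_l(t,x)\|\le C(1+\|x\|)^{\max\{p_1+2\bar\alpha,\,p_0\}}$, which is (ii) with the stated $p_2$.

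For (iii)--(iv) the very same estimates are applied to $f_l(t,\cdot)-f_l(s,\cdot)$, which also has $\pi$-mean zero, so that $u_l(t,\cdot)-u_l(s,\cdot)=\int_0^\infty P_r[f_l(t,\cdot)-f_l(s,\cdot)]\,dr$ solves $\SC{L}(u_l(t,\cdot)-u_l(s,\cdot))=-(f_l(t,\cdot)-f_l(s,\cdot))$; Assumption \ref{assum-f}(ii) then supplies the extra factor $\modu(|t-s|)$ and the exponent $q_0$ in place of $p_0$, and taking the supremum over $|t-s|\le\Delta$ (using monotonicity of $\modu$) yields (iii) with $q_1=(q_0-\alpha+1)^+$ and (iv) with $q_2=\max\{q_1+2\bar\alpha,\,q_0\}$. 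For the last assertion I would differentiate \eqref{Pois-1}: $\partial_k u_l(t,\cdot)$ solves $\SC{L}(\partial_k u_l)=-g_k$ with $g_k=\partial_k f_l+\sum_i(\partial_k b_i)\partial_i u_l+\tfrac12\sum_{ij}(\partial_k a_{ij})\partial_{ij}u_l$. Since $b,a\in C^1$ and $f_l(t,\cdot)\in W^{1,p}_{loc}$, \cite[Theorem 9.19]{GiTu01} gives $u_l(t,\cdot)\in W^{3,p}_{loc}$, so $D^2u_l(t,\cdot)$ is continuous and $u\in C^2(\R^d,\R^n)$; to bound it I would estimate $\|g_k\|_{L^p(B(x,1))}$ from $\sup_{t\le T}\|Df_l(t,x)\|\le C(1+\|x\|)^{p_0'}$, $\|Db\|_{L^\infty(B(x,1))}\le C(1+\|x\|)^{\bar\alpha}$, $\|Da\|_\infty<\infty$ (which follows from boundedness of $a$ and $\|D^2a\|_\infty<\infty$ by interpolation), the already-proved bound (ii), and the $W^{2,p}$-estimate on $D^2u_l$ above, then apply Lemma \ref{Lemma-D2v} once more to the equation for $\partial_k u_l$ and the Sobolev embedding $W^{3,p}\hookrightarrow C^2$ ($p>d$). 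Propagating the exponents — using $p_2=\max\{p_1+2\bar\alpha,p_0\}$ and noting $p_0'$ is dominated in the relevant regime since differentiation lowers the growth rate of $f$ — produces (v) with $p_3=\max\{p_0+2\bar\alpha,\,p_1+4\bar\alpha\}$.

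The one genuinely delicate ingredient is Lemma \ref{Lemma-D2v}: pinning down the quadratic-in-$\|b\|_{L^\infty}$ dependence of the interior $L^p$-estimate constant, which is exactly the dependence that \cite[Theorem 2]{PaVe01} glosses over when it treats $b$ as globally bounded. Once that lemma is in hand, the remainder of the proof is bookkeeping: propagating polynomial growth rates through the representation formula, the $W^{2,p}$ and $W^{3,p}$ interior estimates, and the Sobolev embeddings.
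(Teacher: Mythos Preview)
Your proposal is correct and follows essentially the same route as the paper: Pardoux--Veretennikov for the growth of $u$ (note the paper cites \cite[Theorem~2]{PaVe01} rather than Theorem~1), the interior $L^p$-estimate of Lemma~\ref{Lemma-D2v} with its tracked quadratic dependence on $\|b\|_\infty$ plus Sobolev embedding for $\nabla u$, the difference trick $u_l(t,\cdot)-u_l(s,\cdot)$ for the moduli of continuity, and differentiation of the Poisson equation (the content of Lemma~\ref{Lemma-D2v-pt}) for $D^2u$. Your identification of Lemma~\ref{Lemma-D2v} as the only nontrivial ingredient is exactly right.
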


\begin{proof} The fact that $u \in C^1(\R^d,\R^n)$ (or $C^2(\R^d,\R^n)$, under  additional hypotheses) follows from the discussion above Condition \ref{u-reg-assum}.

Condition \ref{u-reg-assum}-(i) follows from \cite[Theorem 2]{PaVe01}. Condition \ref{u-reg-assum}-(ii) and Condition \ref{u-reg-assum}-(v) now follow from Lemma \ref{Lemma-D2v}, Remark \ref{rem:deriv-v-bd} and Lemma \ref{Lemma-D2v-pt}, applied to $f_l$ and $u_l$ for each $l=1,2, \hdots,n$. 

  In fact from the proof of \cite[Theorem 2]{PaVe01}, it is clear that  if $\|g_\kappa(x)\| \leq \kappa (1+\|x\|^{p_0})$ for some parameter $\kappa$, and $u_\kappa$ given by \eqref{u-form} (with $f_l$ replaced by $g_\kappa$) is the solution to the Poisson equation $\SC{L}u_\kappa = -g_\kappa$, then 
\begin{align}\label{u-bd-obs}
\|u_\kappa(x)\| \leq \bar C \kappa(1+\|x\|^{p_1}),
\end{align}
 where the constant  $\bar C$ does not depend on $\kappa$.  Now notice that for a fixed $l$, $t$ and $\Delta$
 $$u_l(t+\Delta,x) - u_l(t,x) = \int_0^{\infty} P_s \bar f^{t,\Delta}_l(x)ds$$
 is the solution to the equation $\SC{L}v = -\bar f^{t,\Delta}_l$, where $\bar f^{t,\Delta}_l(x) \doteq f(t+\Delta,x) - f(t,x)$ satisfies
 $\|\bar f^{t,\Delta}_l(x)\| \leq \const(T)\modu(\Delta)(1+\|x\|)^{q_0}$ (by Assumption \ref{assum-f}-(ii)). It follows from \eqref{u-bd-obs} that Condition \ref{u-reg-assum}-(iii) holds, and again  Condition \ref{u-reg-assum}-(iv) follows from  Lemma \ref{Lemma-D2v}, Remark \ref{rem:deriv-v-bd} and Lemma \ref{Lemma-D2v-pt}.


\end{proof}

Although the above theorem is nice and might be the only tool available to check Condition \ref{u-reg-assum} and Assumption \ref{assum-exp-u} for many stochastic models, it is not optimal. Consider an one dimensional model, where we have $xb(x)=  -|x|^{1+\alpha}$. Clearly, then it is natural to assume that the drift $b$ satisfies, $|b(x)| \sim |x|^{\alpha}$. Then if Proposition \ref{u-est} is used to determine the exponents of $u, Du$, then it follows from Assumption \ref{assum-exp-u}  that $f$ has to be chosen from the class for which $p_0 <-1$, that is, $|f(x)| \sim 1/(1+|x|)$. This restricts the applicability of the theorem to a  smaller class of functions than desired. 

However, for  one-dimensional SDEs, Proposition \ref{u-est} could actually be vastly improved, and tighter bounds on growth rate of $u$ and $u'$ can be obtained. This result is presented in Proposition \ref{u-reg-1D}. This makes our MDP results applicable to a wide class of stochastic models, and to  functions $f$ having polynomial-like growth -- without doing any extra work for checking regularity of Poisson equation.


\subsection*{ Regularity of Poisson equation for one dimensional SDE}

When $d=1$, the invariant distribution of $X$ is given by
\begin{align*}
\pi(z) =  \f{\cnst}{a(z)}\exp\lf(2\int_0^z \f{b(y)}{a(y)}dy\ri),
\end{align*}
where $\cnst$ is the normalizing constant, and by a slight abuse of notation, we used $\pi(\cdot)$ to denote the density of the invariant distribution $\pi$ . In this case the solution of the Poisson equation, $u(t,\cdot)$, have the following explicit representation:
\begin{align}\label{poi-eq-sol}
u_f(t,x) \equiv u(t,x) = -\int_{-\infty}^x\f{2}{a(z)\pi(z)} \int_{-\infty}^z f(t,y)\pi(y)dy \ dz.
\end{align}

Since in  \eqref{poi-eq-sol}, $t$ is just a parameter, for notational convenience, we will drop $t$ from the following result.

\begin{assumption} \label{assum-poi-1d}There exist exponents $p$, $\theta (> -1)$ and constants $\mfk{c}_0, \mfk{c}_1$ and $\mfk b$ such that 
\begin{enumerate}[(i)]
\item $|f(x)| = O(|x|^{p_0}), |b(x)| = O(|x|^\alpha)$
\item $|f(x)/b(x)| = O(|x|^{p_0-\alpha})$
\item $\mfk c_0 |x|^\theta \leq |b(x)/a(x)| \leq \mfk c_1 |x|^\theta$, for $|x| \geq \mfk b$
\end{enumerate}
\end{assumption}

\begin{proposition} \label{u-reg-1D}Suppose that Condition \ref{cond_SDE_inv}-(i) and Assumption \ref{assum-poi-1d}  hold.   
Then, $u_f$ defined by \eqref{poi-eq-sol}, is a solution to the Poisson equation, and 
\begin{enumerate}[(i)]
\item $|u_f(x)| = O(|x|^{p_0-\alpha+1})$ for $p_0-\alpha \neq -1$; if $p_0 - \alpha=-1$, then $|u(x)| = O(|\ln x|)$;
\item $|u_f'(x)| = O(|x|^{p_0-\alpha})$;
\item $|u_f^{''}(x)| = O(|x|^{p_0-\alpha+\theta})$.
\end{enumerate}
\end{proposition}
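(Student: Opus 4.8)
The plan is to work directly with the explicit representation \eqref{poi-eq-sol} and estimate the nested integrals using Assumption \ref{assum-poi-1d}. First I would establish the growth rate of the invariant density. From Condition \ref{cond_SDE_inv}-(i) we have $\langle x, b(x)\rangle \le -\gamma\|x\|^{1+\alpha}$ for $\|x\|>B$, so in one dimension $b(x)/a(x)$ has the sign of $-x$ for large $|x|$, and by Assumption \ref{assum-poi-1d}-(iii), $|b(y)/a(y)| \asymp |y|^\theta$ with $\theta>-1$. Hence $\int_0^z b(y)/a(y)\,dy \asymp -\tfrac{1}{\theta+1}|z|^{\theta+1}$ for large $|z|$, so $\pi(z) = \tfrac{\cnst}{a(z)}\exp(2\int_0^z b/a)$ decays like $\exp(-c|z|^{\theta+1})$ up to polynomial factors coming from $a(z)$ (which is bounded above and below by Condition \ref{cond_SDE_inv}-(ii)). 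The key consequence is a pair of tail estimates for the inner integral $G(t,z) \doteq \int_{-\infty}^z f(t,y)\pi(y)\,dy$: since $f$ is centralized, $\pi(f(t,\cdot))=0$, so $G(t,z) = -\int_z^\infty f(t,y)\pi(y)\,dy$ as well, and using $|f(y)|=O(|y|^{p_0})$ together with the super-exponential decay of $\pi$, a Laplace-type (Watson's lemma) estimate gives $|G(t,z)| = O(|z|^{p_0}\pi(z)/|b(z)/a(z)|) = O(|z|^{p_0-\theta}\pi(z))$ for large $|z|$ (and $G$ is bounded for bounded $z$). The heuristic is that $\int_z^\infty |y|^{p_0} e^{-c|y|^{\theta+1}}\,dy$ is dominated by its endpoint, contributing a factor $|z|^{-\theta}$ relative to $|z|^{p_0}\pi(z)$.

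Next I would substitute this into the outer integral defining $u_f$: $u_f(t,x) = -\int_{-\infty}^x \tfrac{2}{a(z)\pi(z)}G(t,z)\,dz$. For $u_f'$ this is immediate: $u_f'(t,x) = -\tfrac{2}{a(x)\pi(x)}G(t,x)$, and plugging in the bound on $G$ and $|a(x)|\asymp 1$, $|b(x)/a(x)|\asymp|x|^\theta$ yields $|u_f'(t,x)| = O(|x|^{p_0-\theta}\pi(x)/\pi(x)) = O(|x|^{p_0-\theta})$. Hmm — this gives $O(|x|^{p_0-\theta})$, but the claimed bound in (ii) is $O(|x|^{p_0-\alpha})$; the two agree precisely when one uses Assumption \ref{assum-poi-1d}-(ii), $|f(x)/b(x)| = O(|x|^{p_0-\alpha})$, which is the cleaner route: write $G(t,z) = \int_{-\infty}^z (f/b)(t,y)\,b(y)\pi(y)\,dy$ and note $b(y)\pi(y)$ is, up to the bounded factor $a(y)/\cnst$, a constant multiple of $\tfrac12 (a\pi)'(y)$ by the stationary Fokker–Planck relation $(a\pi)' = 2b\pi$; integrating by parts, $|G(t,z)| = O(|z|^{p_0-\alpha}\,a(z)\pi(z))$ plus a lower-order term from the derivative of $f/b$. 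This directly gives $|u_f'(t,x)| = O(|x|^{p_0-\alpha})$, which is (ii). Then (iii) follows by differentiating once more: $u_f''$ picks up an extra factor from $\tfrac{d}{dx}\log(a(x)\pi(x)) = 2b(x)/a(x) = O(|x|^\theta)$, giving $|u_f''(t,x)| = O(|x|^{p_0-\alpha+\theta})$; one should also check the contribution of $f'(t,x)/a(x)$ (the term from differentiating $G(t,x)$ directly), which under the implicit regularity of $f$ is of lower or comparable order.

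Finally, (i) comes from integrating the bound on $u_f'$: if $p_0-\alpha \ne -1$, then $\int^x |z|^{p_0-\alpha}\,dz = O(|x|^{p_0-\alpha+1})$; the borderline case $p_0-\alpha=-1$ produces the logarithm $O(|\ln x|)$. One must also verify that $u_f$ is finite, i.e., that the outer integral converges at $-\infty$: this uses that $G(t,z) \to 0$ super-exponentially (again by centering) so that $G(t,z)/(a(z)\pi(z))$ is integrable near $-\infty$ despite the blow-up of $1/\pi(z)$ — here the exponential decay of $G$ beats the exponential growth of $1/\pi$ because $G$ inherits a decay rate governed by the tail of $f\pi$ which is the same super-exponential rate. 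I expect the main obstacle to be making the Laplace/Watson's-lemma-type tail estimate for $G$ fully rigorous under only the one-sided and asymptotic hypotheses of Assumption \ref{assum-poi-1d} (which control $b/a$ only for $|x|\ge\mfk b$ and only up to constants, not asymptotically), and in carefully tracking that the "lower-order" terms from integration by parts — involving derivatives of $f/b$ and of $a$ — really are dominated; a clean way around the first issue is the integration-by-parts identity using $(a\pi)'=2b\pi$, which turns the delicate Laplace asymptotics into an exact algebraic cancellation and leaves only elementary polynomial bookkeeping.
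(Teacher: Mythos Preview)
Your approach is essentially the paper's: exploit the identity $(a\pi)'=2b\pi$, integrate by parts in the inner integral, and read off the bound on $u_f'$; then get $u_f$ by integration and $u_f''$ from the equation itself. Two points need correction.

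First, your integration by parts for $G$ as written differentiates $f/b$, and Assumption \ref{assum-poi-1d} gives no regularity on $f$ or $b$. The paper avoids this by first bounding $|f(y)/b(y)|\le \mfk c_2\,|y|^{p_0-\alpha}$ (Assumption \ref{assum-poi-1d}-(ii)) and then integrating by parts against the \emph{monomial} $y^{p_0-\alpha}$: since $2b\pi=(a\pi)'$, one gets for $x>B$
\[
\int_x^\infty y^{p_0-\alpha}\,b(y)\pi(y)\,dy
=\tfrac12\Big[y^{p_0-\alpha}a(y)\pi(y)\Big]_x^\infty-\tfrac{p_0-\alpha}{2}\int_x^\infty y^{p_0-\alpha-1}a(y)\pi(y)\,dy,
\]
the boundary term at $\infty$ vanishing because $y^m a(y)\pi(y)\to 0$ for every $m$. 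If $p_0\le\alpha$ no integration by parts is even needed; if $p_0>\alpha$ one iterates, inserting $a/|b|=O(y^{-\theta})$ from Assumption \ref{assum-poi-1d}-(iii) at each step, until the exponent drops to $\le 0$. This is the ``elementary polynomial bookkeeping'' you allude to, but it has to be set up so that only the monomial is differentiated.

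Second, in (iii) you write that differentiating $G$ contributes a term $f'(t,x)/a(x)$; in fact $G'(x)=f(x)\pi(x)$, so the contribution is $-2f(x)/a(x)$, and combining with the $(a\pi)'/(a\pi)=2b/a$ term you simply recover the Poisson equation $a\,u_f''=-2b\,u_f'-2f$. The paper uses this directly: $|a(x)u_f''(x)|\le |b(x)u_f'(x)|+|f(x)|$, together with (ii) and Assumption \ref{assum-poi-1d}-(i),(iii), immediately gives $|u_f''(x)|=O(|x|^{p_0-\alpha+\theta})$, with no appeal to regularity of $f$.
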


\begin{proof} Direct computation shows that $u_f$ defined by \eqref{poi-eq-sol}, is a solution to the Poisson equation.
Notice that
\begin{align}\label{pi-deriv-eq}
(\pi(z)a(z))' = 2b(z)\pi(z).
\end{align}
Also, it is clear from (a) Assumption \ref{assum-poi-1d}-(iii), (b) the expression of invariant distribution $\pi$, and (c) the fact that $\theta+1 >0$, that for any $m$
\begin{align}\label{ap-lim}
x^ma(x)\pi(x) \rt 0, \quad \mbox{ as } |x| \rt \infty.
\end{align}

Notice that since $f$ is centered, that is $\pi(f)=0$,
\begin{align}\label{u-poi-rep}
u'_f(x) = & -  \f{2}{a(x)\pi(x)} \int_{-\infty}^x f(y)\pi(y)dy =  \f{2}{a(x)\pi(x)} \int_{x}^\infty f(y)\pi(y)dy 
\end{align}

Since for $|x| > B$ ($B$ was introduced Condition \ref{cond_SDE_inv}-(i)) , $xb(x)<0$, we have that $b(x) <0$ for all $x>B$  and $b(x)>0$ for $x < - B$
 For our purposes, the second equality in \eqref{u-poi-rep} needs to be used when $x> B$, and the first  needs to be used when $x< - B$.

We first consider the case when $x> B$. 
Observe by Assumption \ref{assum-poi-1d}-(ii) and the fact that for $x>B$, $|b(x)| = -b(x)$, we have for some constant $\mfk{c}_2$
\begin{align*}
|u_f'(x)| \leq &\   \f{2}{a(x)\pi(x)} \int_{x}^\infty \lf|\f{f(y)}{b(y)}\ri| |b(y)| \pi(y) dy \\
            \leq & -\f{2\mfk{c}_2}{a(x)\pi(x)} \int_{x}^\infty y^{p_0-\alpha} b(y) \pi(y) dy.
\end{align*}
If $p_0 \leq \alpha$, then by \eqref{pi-deriv-eq} and \eqref{ap-lim}, it follows that
$$|u_f'(x)|  = O(|x|^{p_0-\alpha}).$$
If $p_0 > \alpha$, then we use  \eqref{pi-deriv-eq} and integration by parts to get,
\begin{align*}
|u_f'(x)| \leq     &\ -\f{\mfk{c}_2}{a(x)\pi(x)}\lf[y^{p_0 -\alpha}a(y)\pi(y)\Big|_{x}^\infty - \int_{x}^{\infty}y^{p_0-\alpha-1} a(y)\pi(y)dy\ri]\\
         =& \ \mfk{c}_2 x^{p_0-\alpha} + \f{\mfk{c}_2}{a(x)\pi(x)}\int_{x}^{\infty}y^{p_0-\alpha-1}a(y)\pi(y)dy\\
%
       = & \mfk{c}_2 x^{p_0-\alpha}+  \f{\mfk{c}_2}{a(x)\pi(x)}\int_{x}^{\infty}y^{p_0-\alpha-1}\f{a(y)}{|b(y)|}|b(y)|\pi(y)dy\\
      \leq & \ \mfk{c}_2 x^{p_0-\alpha} -  \f{\mfk{c}_2/\mfk{c}_0}{a(x)\pi(x)}\int_{x}^{\infty}y^{p_0-\alpha-1-\theta}b(y)\pi(y)dy.
      \end{align*}
If  $p-\alpha-\theta \leq 1$, then it follows that
\begin{align*}
|u'_f(x)| \leq &\ \mfk{c}_2 x^{p_0-\alpha}-  \f{\mfk{c}_2x^{p_0-\alpha-1-\theta}/\mfk{c}_0}{a(x)\pi(x)}\int_{x}^{\infty}b(y)\pi(y)dy\\
       = &\ 2\mfk{c}_2x^{p_0-\alpha}+  \mfk{c}_2x^{p-\alpha-1-\theta}/2\mfk{c}_0 = O(|x|^{p_0-\alpha})
\end{align*}
where we have used \eqref{pi-deriv-eq} and \eqref{ap-lim}.  
If   $p_0-\alpha-\theta >1$, then let $k > 1$ be the smallest integer such that $p_0-\alpha-\theta \leq k$. Now we repeat the integration by parts technique $k$ times to prove the assertion.

If $x<-B$ then we use the first equality in \eqref{u-poi-rep} and the same techniques to prove the assertion.  

To prove the bound on $u''_f$ simply observe that
\begin{align*}
|a(x)u_f''(x)| \leq &\  |b(x)u'_f(x)| + |f(x)|.
\end{align*}
and now the assertion follows from (ii) and (iii) of Assumption \ref{assum-poi-1d}.

\end{proof}

\begin{example}\label{ex:OU}{\rm
Let $X$ be the {\em mean-reverting} Ornstein-Uhlenbeck process satisfying
\begin{align*}
X(t) = x_0 + \kappa\int_0^t(\mu - X(s))ds + \s W(t).
\end{align*}
The invariant distribution of $X$ is  of course the Normal$(\mu, \s^2/2\kappa)$. Here $\alpha = \bar\alpha=\nu = 1$. Then for $f$, with $|f(x)| \leq \const (1+\|x\|)^{p_0}$, Proposition \ref{u-reg-1D} gives the exponents of Condition \ref{u-reg-assum}: $p_1 = p_0,\ p_2 = p_0-1$. Note that $p_3$ is not needed as the diffusion coefficient is constant $\s$ (see Remark \ref{rem-MDPZ-1}). Thus if $p_0 \leq 1$, then  Assumption \ref{assum-exp-u} holds, and the MDP result (Theorem \ref{th:mdpZ}) holds for such functions $f$. 
}
\end{example}

\begin{example}{\rm
We next consider the Cox-Ingersoll-Ross (CIR) model, which describes the dynamics of the instantaneous interest rates. Let $X(t)$ be the solution to
\begin{align*}
dX(t) = \kappa(\mu-X(t))+ \s\sqrt{X(s)} dW(s), \quad X(0) =x_0>0,
\end{align*}
where $\kappa, \mu$ and $\s$ are positive constants.
Then it is a known fact that if $\kappa \mu \geq \s^2/2$, then $X(t)$ takes values in $(0,\infty)$. The invariant distribution of $X$ is given by {\em Gamma$( 2\mu\kappa/\s^2, 2\kappa/\s^2)$}, that is,
$$\pi(x) = \f{(2\kappa/\s^2)^{2\mu\kappa/\s^2}}{\G(2\mu\kappa/\s^2)} x^{2\mu\kappa/\s^2-1}\exp(-2\kappa x/\s^2 ), \quad x>0.$$
Here $\alpha=1$, but $\s(x) = \s\sqrt x$ has degeneracy at $0$, which, however, $X$ never hits. But as mentioned before, degeneracy is not an issue, if a unique invariant measure and solution of Poisson equation  exist, which they do in this case. For $f$, with $|f(x)| \leq \const (1+\|x\|)^{p_0}$, Proposition \ref{u-reg-1D} gives the desired exponents: $p_1 = p_0, \ p_2 = p_0 - 1, p_3 = p_0.$ Assumption \ref{assum-exp-u} needs to be modified for the MDP and CLT results, since $\s(x)$ is not bounded in this case. However, with little extra effort, the right assumption to work with in this case can be formulated. 
Indeed, for the MDP result to hold, one needs  $\max\{p_1, p_2, p_3\} \leq 3/4$.
This means that Theorem \ref{th:mdpZ} will apply to functions $f$ with $p_0 \leq 3/4$ (and $q_0\leq 3/4$ if we are considering inhomogeneous functionals as well), and the discretization step $\Delta(\vep)$ needs to be chosen such that $\Delta(\vep)/\vep^3 \rt 0$.  The CLT result for the discretized process $Z^\vep$ (Theorem \ref{th:cltZ}), of course, holds  for any $f$ satisfying Assumption \ref{assum-f} for some $p_0$, and $\modu(\Delta) = o(\sqrt\Delta),$ under the same choice of $\Delta(\vep)$.

}
\end{example}

\begin{example}\label{ex:Gomp} {\rm
Let $X(t)$ be the {\em geometric mean-reversion process}or the {\em Gompertz diffusion model}  defined as the solution to the SDE:
\begin{align*}
dX(t) = \kappa(\mu- \ln X(t))X(t) dt + \s X(t) dW(t), \quad X(0) = x_0 > 0,
\end{align*} 
where $\kappa, \mu$ and $\s$ are positive constants. This model is used not only in commodity pricing \cite{Sch97}, but also in determining freight rates in shipping \cite{Tve97}. It is also used to model the {\em in vitro} tumor growth \cite{Bis08} with $X$ representing the volume of tumor and the drift parameters capturing the growth rate.
The solution $X$ can be written explicitly in this case and, in particular,  $X(t)>0$.  The invariant distribution, $\pi$, of $X$ is {\em log-normal}$(\mu - \f{\s^2}{2\kappa},  \f{\s^2}{2\kappa})$. Since the drift term is not Lipschitz a CLT or MDP result for $\int_0^\cdot f(Z^\vep(s))ds$, for the Euler discretized process $Z^\vep$, will not directly follow from our results. However, we do note that  the MDP result for $\int_0^\cdot f(X^\vep(s))ds$ (Theorem \ref{th:mdpX}) still holds (after some simple adjustments to its hypotheses) since its validity does not require Lipschitz or H\"older continuity of the coefficients.)

But interestingly, with a little trick, we can still get both CLT and MDP result for processes of  the form $\int_0^\cdot f(\hat Z^\vep(s))ds$  for a slightly different discretization scheme. Indeed, the transformation $x \rt \ln x$, transform the above SDE into an OU process, given by 
\begin{align*}
dY(t) = \kappa\lf(\mu- \f{\s^2}{2\kappa} -Y(t)\ri) dt + \s dW(t), \quad Y(0) = \ln x_0.
\end{align*} 
If $Y^\vep$ denotes the corresponding scaled process (as in \eqref{SDE1}), and $\hat Y^\vep$ denotes the (continuous) Euler discretization of $Y^\vep$, then defining $\hat Z^\vep(\cdot) = \exp(\hat Y^\vep(\cdot))$ gives a discretized version of $X^\vep$. The desired MDP and CLT results for $\int_0^\cdot f(\hat Z^\vep(s))ds$ now apply to all functions $f$ satisfying $|f(x)| \leq \const(1+|\ln x|)^{p_0}$, with MDP requiring $p_0 \leq 1$.

}
\end{example}





\setcounter{equation}{0}
\renewcommand {\theequation}{\arabic{section}.\arabic{equation}}

\section{Equivalent forms of the rate function} \label{sec:eq-rate}
In this section we describe two equivalent forms of the rate function $I_f$ that will be convenient to work with in the proof of upper and lower bounds of Laplace principle.

Let $\l_T$ denote the Lebesgue measure on $[0,T]$. Let $\mathbb{B}_T= [0,T]\times\R^d\times \R^m,$ and let $\SC{M}_1(\B_T)$ be the space of finite measures $R$ on $\B_T$ such that  $R_{(1)}  = \l_T$ and $R_{(2,3|1)}$ is a probability measure on $\R^d\times\R^m$. Here for $i=1,2,3$, $R_{(i)}$ denotes the $i$-th marginal of $R$ and $R_{(i,j|k)}$ denotes the conditional distribution of $i$-th and $j$-th coordinate given the $k$-th coordinate. 

For each $\xi \in C([0,T], \R)$, let $\SC{R}_\xi$ denote the family of measures $R \in \SC{M}_1(\B_T)$ such that
\begin{align}
\label{Rset-1}
&	\int_{\B_T} \|z\|^2 R(d\y)  <\infty;\\
\label{Rset-2}
&	\xi(t) = \int_{\B_t} D u(s, x) \s(x) z R(d\y);\\
\label{Rset-3}
&	\int_{\B_t} \SC{L} g(x) R(d\y) =0, \quad \mbox { for all } t \in [0,T], \ g \in C^2_b(\R^d, \R),
\end{align}	
where the $l$-th row of the derivative matrix $Du$ is given by 
$$(Du(s, x))_{l*} = \nabla^T u_l(s, x) = (\partial _1 u_l(s, x), \partial _2 u_l(s, x), \hdots, \partial _d u_l(s, x))$$
 and a typical tuple $(s,x,z) \in \B_T$ is denoted by $\y$. Define $\bar I_f : C([0,T],\R^d) \rt [0,\infty]$ by
	\begin{align}\label{rate0}
		\bar I_f(\xi) = \inf_{R \in \SC{R}_\xi} \lf\{\f{1}{2} \int_{\B_T} \|z\|^2 R(d\y)\ri\}.
	\end{align}

Next, let $\SC{A}_\xi$ denote the space of $\phi \in L^2(\R^d\times [0,T], \pi\times \l_T)$ such that 
\begin{align*}
\xi(t) = \int_{\R^d \times [0,t]} D u(s, x) \s(x) \phi(x,s) \pi(dx) ds.
\end{align*}	
Define $\hat I_f : C([0,T],\R^d) \rt [0,\infty]$ by
\begin{align}\label{rate1}
\hat I_f(\xi) = \inf_{\phi \in \SC{A}_\xi} \lf\{\f{1}{2} \int_{\R^d \times [0,T]} \|\phi(x,s)\|^2 \pi(dx) ds\ri\}.
\end{align}	

\begin{lemma}\label{M-mat-alt}
$M_f(t) = \int_{\R^d}Du(t, x)a(x)(Du(t, x))^T \pi(dx)$, where $a=\s\s^T$, $u$ is defined by \eqref{Pois-1} and $M_f$ is defined by \eqref{M-mat-def}.
\end{lemma}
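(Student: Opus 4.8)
The identity to establish is $M_f(t) = \int_{\R^d} Du(t,x)\,a(x)\,(Du(t,x))^T\,\pi(dx)$, connecting the ``resolvent'' expression \eqref{M-mat-def} with a ``carré du champ'' expression in terms of the solution of the Poisson equation. Since $t$ enters only as a parameter in both sides, I will fix $t$ and suppress it from the notation, writing $u = u(t,\cdot)$, $f = f(t,\cdot)$, so that $\SC{L}u_l = -f_l$ for each $l$. The plan is to compute the $(i,j)$-entry of the right-hand side via the standard identity $\SC{L}(u_i u_j) = u_i \SC{L}u_j + u_j \SC{L}u_i + (\nabla u_i)^T a\, \nabla u_j$, integrate against the invariant measure $\pi$, and use stationarity of $\pi$ (i.e. $\pi(\SC{L}g) = 0$) together with the representation \eqref{u-form} of $u$.

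First I would record the pointwise ``integration by parts'' / Leibniz identity for the second-order operator $\SC{L}$ in \eqref{gen-diff}: for $g, h \in C^2$,
\begin{align*}
\SC{L}(gh) = g\,\SC{L}h + h\,\SC{L}g + \sum_{kl} a_{kl}\,\partial_k g\,\partial_l h = g\,\SC{L}h + h\,\SC{L}g + (\nabla g)^T a\,(\nabla h),
\end{align*}
which is a direct computation from the product rule. Applying this with $g = u_i$, $h = u_j$ and using $\SC{L}u_i = -f_i$, $\SC{L}u_j = -f_j$ gives $(\nabla u_i)^T a\,\nabla u_j = \SC{L}(u_i u_j) + u_i f_j + u_j f_i$ pointwise. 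The $(i,j)$-entry of the right-hand side of the claimed identity is then $\int_{\R^d}(\nabla u_i)^T a\,\nabla u_j\, d\pi = \int_{\R^d}\bigl[\SC{L}(u_i u_j) + u_i f_j + u_j f_i\bigr]d\pi$. The term $\int \SC{L}(u_i u_j)\,d\pi$ vanishes by stationarity of $\pi$ (justified because $u$, $\nabla u$, $D^2u$ have at most polynomial growth by Condition \ref{u-reg-assum} / Proposition \ref{u-est}, while $\pi$ has exponential moments, so $u_iu_j \in \mathrm{Dom}(\SC{L})$ in the relevant sense and $\pi(\SC{L}(u_iu_j))=0$). Hence the $(i,j)$-entry equals $\int_{\R^d}\bigl(u_i f_j + u_j f_i\bigr)d\pi$.

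It remains to match this with \eqref{M-mat-def}, i.e. to show $\int_{\R^d} u_i(x) f_j(x)\,\pi(dx) = \int_{\R^d}\int_0^\infty f_i(x) P_s f_j(x)\,ds\,\pi(dx)$ (and symmetrically in $i,j$). Substituting the representation $u_i(x) = \int_0^\infty P_s f_i(x)\,ds$ from \eqref{u-form}, the left side becomes $\int_{\R^d} f_j(x)\int_0^\infty (P_s f_i)(x)\,ds\,\pi(dx) = \int_0^\infty \int_{\R^d} f_j(x)(P_s f_i)(x)\,\pi(dx)\,ds$ by Fubini (justified by the polynomial growth bounds on $f$ and $P_s f$ together with the exponential ergodic bound $\|P_s(x,\cdot) - \pi\|_{TV} \le \Theta e^{\theta_1\|x\|}e^{-\theta_2 s}$ and $\pi(f_i) = 0$, which gives integrable decay in $s$). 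Then I use reversibility is \emph{not} available in general here, so instead I invoke the $\pi$-stationarity identity $\int f_j \cdot P_s f_i\, d\pi = \int P_s f_j \cdot f_i \, d\pi$? That fails without self-adjointness; instead note the correct move: $\int_{\R^d} f_j(x) (P_s f_i)(x)\pi(dx)$ is already of the form appearing in \eqref{M-mat-def} after relabeling, since \eqref{M-mat-def} symmetrizes over $i \leftrightarrow j$ with \emph{both} orders $f_i P_s f_j$ and $f_j P_s f_i$ present. So $\int(u_i f_j + u_j f_i)d\pi = \int_0^\infty\int_{\R^d}[f_j P_sf_i + f_i P_s f_j]\,d\pi\,ds = (M_f(t))_{i,j}$, completing the proof.

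\textbf{Main obstacle.} The only delicate point is rigor in the two ``free'' steps: (a) that $\pi(\SC{L}(u_iu_j)) = 0$ despite $u_iu_j$ being unbounded, and (b) the Fubini interchange and convergence of $\int_0^\infty P_s f_i\,ds$. Both are controlled by the polynomial growth estimates of Condition \ref{u-reg-assum} (resp. Proposition \ref{u-est}) combined with the exponential tail bound on $\pi$ and the exponential mixing bound stated after Condition \ref{cond_SDE_inv}; one approximates $u_iu_j$ by truncation or uses that $W^{2,p}_{loc}$ regularity plus these growth/moment bounds place the product in the extended domain of the generator on which $\pi$ is infinitesimally invariant. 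I would state this as a lemma-level justification rather than belabor it, since the paper elsewhere relies on exactly these growth-versus-moment trade-offs.
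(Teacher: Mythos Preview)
Your proposal is correct and reaches the same intermediate identity as the paper, namely
\[
\int_{\R^d}(\nabla u_i)^T a\,\nabla u_j\,d\pi \;=\; \int_{\R^d}\bigl(u_i f_j + u_j f_i\bigr)\,d\pi,
\]
followed by substitution of $u_l=\int_0^\infty P_s f_l\,ds$ from \eqref{u-form}. The route to this identity differs, however. The paper applies It\^o's formula to $u_i(t,X(\cdot))$ and $u_j(t,X(\cdot))$ with $X(0)\sim\pi$, uses the semimartingale product (integration-by-parts) formula, takes expectations, and cancels $\EE_\pi[u_iu_j(X(r))]$ against $\EE_\pi[u_iu_j(X(0))]$ by invariance of $\pi$. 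You instead use the pointwise Leibniz identity $\SC{L}(u_iu_j)=u_i\SC{L}u_j+u_j\SC{L}u_i+(\nabla u_i)^Ta\,\nabla u_j$ and integrate directly against $\pi$, invoking $\pi(\SC{L}(u_iu_j))=0$. These are two faces of the same coin: the paper's cancellation is the dynamic version of your infinitesimal-invariance step. Your argument is marginally more elementary in that it avoids stochastic calculus, but the technical point you correctly flag --- justifying $\pi(\SC{L}g)=0$ for the unbounded $g=u_iu_j$ --- is exactly equivalent to the paper's implicit requirement that the stochastic integral $\int_0^r(\nabla u_i)^T\sigma\,dB$ be a true (not merely local) martingale so that its expectation vanishes. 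Both hinge on the same polynomial-growth-of-$u,Du$ versus exponential-moments-of-$\pi$ trade-off, so neither approach buys a genuine simplification over the other on the rigor side.
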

	
\begin{proof}
Fix $t>0$. By It\^o's lemma, we have
\begin{align*}
u_i(t, X(r))  = &\ u_i(t, X(r) ) +\int_{0}^r\SC{L}u_i(t,\cdot)(X(s)) ds +\int_{0}^r \nabla^T u_i(t,  X(s))\s(X(s))dB(s)\\
= &\ u_i(t, X(r) ) - \int_{0}^r f_i(t_0, X(s)) ds +\int_{0}^r \nabla^T u_i(t,  X(s))\s(X(s))dB(s).
\end{align*}	
Then by integration by parts and observing that the last term on the right side is a martingale, we have, for any $t>0$, after taking expectation with $X(0)$ distributed as $\pi$
\begin{align*}
	\EE_\pi\lf(u_i(t, X(r))u_j(t, X(r))\ri) &= \ \EE_\pi\lf(u_i(t, X(0))u_j(t, X(0))\ri) - \int_0^r  \EE_\pi\lf(u_i(t, X(s))f_j(t, X(s))\ri) ds  \\ 
	& \ -  \int_0^r  \EE_\pi\lf(u_j(t, X(s))f_i(t, X(s))\ri) ds \\
	& \ + \int_{0}^r \EE_\pi\lf(\nabla^T u_i(t,  X(s))\s(X(s))\s^T(X(s))\nabla^T u_j(t,  X(s))\ri)ds.
\end{align*}	
The result now  easily follows from \eqref{u-form} and from the observation that the left side is equal to the first term on the right side as for all $r>0$, $X(r)$ is distributed as $\pi$  ($\pi$ is the invariant measure).
\end{proof}

\begin{theorem}\label{rate-equivalence}
	$\bar I_f = \hat I_f = I_f,$ where these quantities are defined in \eqref{rate0}, \eqref{rate1} and \eqref{rate2}, respectively.
\end{theorem}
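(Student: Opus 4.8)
The plan is to establish the chain of equalities $\bar I_f = \hat I_f = I_f$ by a three-step argument: first showing $\hat I_f = I_f$ by a finite-dimensional optimization (Lagrange multiplier / linear algebra) computation, then showing $\bar I_f = \hat I_f$ by reducing a measure $R \in \SC{R}_\xi$ to a function $\phi \in \SC{A}_\xi$ via conditioning and a disintegration argument.

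\textbf{Step 1: $\hat I_f = I_f$.} For a fixed absolutely continuous $\xi$, the constraint defining $\SC{A}_\xi$ is linear in $\phi$: for a.e.\ $t$, $\dot\xi(t) = \int_{\R^d} Du(t,x)\s(x)\phi(x,t)\,\pi(dx)$. I would minimize $\frac12 \int \|\phi(x,t)\|^2 \pi(dx)\,dt$ subject to this constraint, noting the problem decouples over $t$. For each fixed $t$ this is a standard quadratic minimization over the Hilbert space $L^2(\pi;\R^m)$ subject to $n$ linear constraints; the minimizer has the form $\phi(x,t) = \s^T(x)(Du(t,x))^T \lambda(t)$ for some $\lambda(t) \in \R^n$, and plugging back in, the constraint becomes $\dot\xi(t) = \big(\int Du(t,x)a(x)(Du(t,x))^T\pi(dx)\big)\lambda(t) = M_f(t)\lambda(t)$ by Lemma \ref{M-mat-alt}. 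Hence $\lambda(t) = M_f(t)^{-1}\dot\xi(t)$ (using nondegeneracy of $M_f$, which follows from uniform ellipticity of $a$), and the optimal cost is $\frac12\int \lambda(t)^T M_f(t) \lambda(t)\,dt = \frac12 \int \dot\xi(t)^T M_f(t)^{-1}\dot\xi(t)\,dt = I_f(\xi)$. One also has to check that if $\xi$ is not absolutely continuous then $\SC{A}_\xi$ is empty (so $\hat I_f = \infty = I_f$), which is immediate since any element of $\SC{A}_\xi$ represents $\xi$ as an integral of an $L^2$ (hence $L^1$ in $t$) function.

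\textbf{Step 2: $\bar I_f \le \hat I_f$ and $\bar I_f \ge \hat I_f$.} For $\bar I_f \le \hat I_f$: given $\phi \in \SC{A}_\xi$, define $R$ on $\B_T$ by $R(ds\,dx\,dz) = \l_T(ds)\,\pi(dx)\,\delta_{\phi(x,s)}(dz)$. Then $R_{(1)} = \l_T$, the conditional $R_{(2,3|1)}$ is a probability measure, constraint \eqref{Rset-2} holds by construction, constraint \eqref{Rset-1} is $\int\|\phi\|^2 \pi\,d\l_T < \infty$, and constraint \eqref{Rset-3} holds because $\pi$ is invariant so $\int \SC{L}g\,d\pi = 0$ for $g \in C^2_b$. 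Thus $R \in \SC{R}_\xi$ with cost $\frac12\int\|\phi\|^2\pi\,d\l_T$, giving $\bar I_f \le \hat I_f$. For the reverse $\bar I_f \ge \hat I_f$: given $R \in \SC{R}_\xi$, disintegrate $R(ds\,dx\,dz) = \l_T(ds)\,R_s(dx\,dz)$ where each $R_s$ is a probability measure on $\R^d\times\R^m$; then further disintegrate $R_s(dx\,dz) = \mu_s(dx)\,R_{s,x}(dz)$. The key point is that constraint \eqref{Rset-3}, $\int_{\B_t}\SC{L}g(x)\,R(d\y) = 0$ for all $t$, forces (by differentiating in $t$) $\int_{\R^d}\SC{L}g(x)\,\mu_s(dx) = 0$ for a.e.\ $s$ and all $g \in C^2_b$, i.e.\ $\mu_s$ is $\SC{L}^*$-invariant; by uniqueness of the invariant distribution (Condition \ref{cond_SDE_inv}), $\mu_s = \pi$ for a.e.\ $s$. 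Then define $\phi(x,s) = \int_{\R^m} z\,R_{s,x}(dz)$, the conditional mean. By Jensen's inequality $\int\|\phi(x,s)\|^2\pi(dx) \le \int\|z\|^2 R_s(dx\,dz)$, so the cost does not increase, and since \eqref{Rset-2} is linear in $z$, replacing $z$ by its conditional mean preserves the representation of $\xi$. Hence $\phi \in \SC{A}_\xi$ with no larger cost, giving $\bar I_f \ge \hat I_f$.

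\textbf{Main obstacle.} I expect the delicate step to be the rigorous disintegration and the deduction that $\mu_s = \pi$ for a.e.\ $s$ from the integrated constraint \eqref{Rset-3} — this requires care about measurability of the disintegration $s \mapsto \mu_s$, about passing from "$\int_0^t(\cdots)ds = 0$ for all $t$" to "the integrand vanishes a.e.", and about the fact that $\{\SC{L}g : g \in C^2_b(\R^d)\}$ is a rich enough class to characterize $\pi$ as the unique solution of $\SC{L}^*\mu = 0$ in the relevant sense (which is exactly where Condition \ref{cond_SDE_inv} and the cited ergodicity results are invoked). A secondary technical point is justifying that $M_f(t)$ is invertible and that $t \mapsto M_f(t)^{-1}\dot\xi(t)$ is measurable with the right integrability, but this follows routinely from uniform ellipticity and the polynomial bounds on $Du$ from Condition \ref{u-reg-assum}. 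The rest of the argument is standard convex-duality bookkeeping.
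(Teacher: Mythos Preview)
Your proposal is correct and follows essentially the same approach as the paper: the paper proves $\bar I_f = \hat I_f$ by the same disintegration argument (using \eqref{Rset-3} and uniqueness of $\pi$ to identify the $x$-marginal, then taking the conditional mean $\phi(x,s)=\int z\,R_{(3|1,2)}(dz|x,s)$ with Jensen/Cauchy--Schwarz for one direction, and $R = ds\,\pi(dx)\,\delta_{\phi(x,s)}(dz)$ for the other), and proves $\hat I_f = I_f$ via the quadratic minimization in Lemma~\ref{quad-max}, which is exactly your Lagrange-multiplier computation. The only cosmetic difference is the order of the two steps.
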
	

\begin{proof} We first show that $\bar I_f(\xi) = \hat I_f(\xi)$. Fix $\kappa>0$. Let $R \in \SC{R}_\xi$ be such that 
\begin{align}
\f{1}{2} \int_{\B_T} \|z\|^2 R(d\y) \leq \bar I_f(\xi) + \kappa.
\end{align}
Writing $R(d\y) = R_{(2,3|1)}(dx\times dz|s)ds$ and using \eqref{Rset-3}, for any $g\in C^2_b(\R^d, \R)$,  we have for a.a $s \in[0,T]$
\begin{align*}
0=& \int_{\R^d\times\R^m} \SC{L}g(x) R_{(2,3|1)}(dx\times dz|s) = \int_{\R^d} \SC{L}g(x) R_{(2|1)}(dx |s).
\end{align*}
By the uniqueness of $\pi$, we have $ R_{(2|1)}(dx |s) = \pi(dx)$ for a.a $s \in[0,T]$ and thus we have
$$R(d\y) =  R_{(3|1,2)}(dz|x,s)R_{(2|1)}(dx|s)ds = R_{(3|1,2)}(dz|x,s)\pi(dx)ds.$$
Define $\phi(x,s) = \int_{\B_t}z R_{(3|2,1)}(dz|x,s).$ Clearly, by Cauchy-Schwarz inequality,
\begin{align*}
\int_{\R^d\times[0,T]}\|\phi(x,s)\|^2 \pi(dx) ds \leq & \int_{\R^m\times \R^d\times[0,T]}\|z\|^2 R_{(3|2,1)}(dz|x,s) \pi(dx) ds = \int_{\B_T} \|z\|^2 R(d\y).
\end{align*}
Also,
\begin{align*}
\xi(t) = \int_{\B_t} D u(s, x) \s(x) z R(d\y) =& \int_{\B_t} Du(s, x) \s(x) z R_{(3|2,1)}(dz|x,s) \pi(dx) ds\\
     = &  \int_{\R^d\times [0,t]} Du(s, x) \s(x) \phi(s, x)\pi(dx) ds.
\end{align*}
Hence $\phi \in \SC{A}_{\xi}$.
\begin{align*}
\hat I_f(\xi) & \leq \f{1}{2}\int_{\R^d\times[0,T]}\|\phi(x, s)\|^2 \pi(dx) ds \leq \f{1}{2}\int_{\B_T} \|z\|^2 R(d\y) \leq \bar I_f(\xi) +\kappa.
\end{align*}
Since this is true for all $\kappa$, $\hat I_f(\xi) \leq \bar I_f(\xi)$. 

Conversely, for a fixed $\kappa>0$, let $\phi \in \SC{A}_{\xi}$ be such that
\begin{align} \label{rate2-lbd}
 \f{1}{2}\int_{\R^d\times[0,T]}\|\phi(x,s)\|^2 \pi(dx) ds \leq & \hat I_f(\xi) +\kappa.
\end{align}
Define the measure $R$ on $\B_t$ by 
\begin{align*}
R([0,t]\times A\times B) = \int_{A \times [0,t]} 1_{\{\phi(x,s) \in B\}}\pi(dx)ds.
\end{align*}
Clearly, by the definition of $R$,
\begin{align*}
\int_{\B_T}\|z\|^2 R(d\y) = \int_{\R^d\times [0,T]} \|\phi(x,s)\|^2 \pi(dx) ds,
\end{align*}
and 
\begin{align*}
\xi(t)=\int_{\R^d\times [0,t]} Du(s, x) \s(x) \phi(x,s)\pi(dx) ds =  \int_{\B_t} D u(s, x) \s(x) z R (d\y).
\end{align*}
Thus 
\begin{align*}
\bar I_f(\xi) & \leq  \f{1}{2}\int_{\B_T} \|z\|^2 R(d\y)= \f{1}{2}\int_{\R^d\times[0,T]}\|\phi(x,s)\|^2 \pi(dx) ds  \leq \hat I_f(\xi) +\kappa.
\end{align*}
Consequently, $\bar I_f(\xi) \leq \hat I_f(\xi).$

We next show that $\hat I_f(\xi) = I_f(\xi).$ Let $\kappa >0$ and  let $\phi \in \SC{A}_{\xi}$ be such that \eqref{rate2-lbd} holds.
Notice that
\begin{align*}
\dot \xi(s) = \int_{\R^d}  Du(s, x), \s(x) \phi(x,s) \pi(dx).
\end{align*}
By Lemma \ref{quad-max} (taking $(\Omega, \PP) = (\R^d, \pi)$,  $H(s, x) = Du(s, x)\s(x)$, $b = \dot\xi(s)$) for a.a $s$
\begin{align*}
(\dot \xi(s))^T M_f(s)^{-1} \dot \xi(s) \leq \int_{\R^d}\|\phi(x,s)\|^2 \pi(dx),
\end{align*}	 
where we used the fact that by Lemma \ref{M-mat-alt},
$$M_f(s) = \int_{\R^d} H(s, x)H(s, x)^T\pi(dx) = \int_{\R^d}Du(s,x)a(x)(Du(s,x))^T \pi(dx).$$
It now readily follows that $I_f(\xi) \leq \hat I_f(\xi)+\kappa$, and since this is true for all $\kappa>0$, we have  $I_f(\xi) \leq \hat I_f(\xi).$
Conversely, for an absolutely continuous $\xi$, define $\phi(x,s)= H^T(s,x)M_f(s)^{-1}\dot \xi(s)$. Clearly, $\phi \in \SC{A}_\xi$, and
\begin{align*}
\f{1}{2}\int_{\R^d\times [0,T]} \|\phi(x,s)\|^2\pi(dx) ds = &  \f{1}{2} \int_{[0,T]} (\dot \xi(s))^T M_f(s)^{-1} \dot \xi(s) ds.
   \end{align*}
It follows that $I_f(\xi) \geq \hat I_f(\xi)$.

\end{proof}

\setcounter{equation}{0}
\renewcommand {\theequation}{\arabic{section}.\arabic{equation}}

\section{Some estimates} \label{sec:est}
We begin by making the following simple observation.
Let $\{\tilde t_k\}$ be a partition of $[0,t]$ such that $\tilde t_k  - \tilde t_{k-1} = \tilde \Delta$. Let $\eta(s) = \tilde t_{k},$ if $\tilde t_k \leq s <\tilde t_{k+1}$.
Then for any locally integrable function $h$, by changing the order of integration, we get
	\begin{align}\non
		\int_0^t \int_{\eta(s)}^s|h(r)|dr\ ds = &\ \sum_k \int_{\tilde t_k}^{\tilde t_{k+1}}\int_{\tilde t_k}^s|h(r)|dr\ ds\\ \non
		= &\ \sum_k \int_{\tilde t_k}^{\tilde t_{k+1}}\int_{r}^{\tilde t_{k+1}}|h(r)|ds\ dr\\ 
		\leq &\ \tilde \Delta \sum_k \int_{\tilde t_k}^{\tilde t_{k+1}}|h(r)|dr =  \tilde \Delta \int_{0}^{ t}|h(r)| dr. 
		\label{int-diff-est}
	\end{align}

For the following result, we just need to assume Condition \ref{cond_SDE_coef-ex} and boundedness of $\s$. Actually, the latter boundedness assumption can easily be relaxed.
\begin{lemma}\label{lem-diff-est}
Let $\bar Z^\psi_\vep$ as in \eqref{ctrl-EuSDE1}, and assume that Condition \ref{cond_SDE_coef-ex} holds, and that $\s$ is bounded. Let $\Delta(\vep)$ be such that $\Delta(\vep)/\vep \rt 0$ as $\vep \rt 0$. Then for any $M\geq 0 $ and $m\geq 0$, there exist  $\vep_0> 0$, and constants $\tilde C^1$,  $\tilde C^2$ such that  for any $\psi \in \SC{P}^M_2$ and $\vep\leq \vep_0$
\begin{enumerate}[(i)]
	\item $\dst
	\EE\lf[\|\bar Z^\psi_\vep(s) - \bar Z^\psi_\vep (\vr_\vep(s))\|^m \Big | \SC{F}_{\vr_\vep(s)}\ri]  \leq   \tilde C^1\lf(\mcp^m(\vep)+ \|\bar Z^\psi_\vep (\vr_\vep(s))\|^{m \bar \alpha}(\Delta(\vep)/\vep)^m\ri),$
	\item $\dst
\EE\lf[\|\bar Z^\psi_\vep(s) - \bar Z^\psi_\vep (\vr_\vep(s))\|^m\ri] \leq   \tilde C^2\lf(\mcp^m(\vep)+\EE\|\bar Z^\psi_\vep (s)\|^{m\bar \alpha}(\Delta(\vep)/\vep)^m\ri),$
\end{enumerate}	
where $\mcp(\vep) = 	\mdpsc\Delta^{1/2}(\vep)/\vep $. Furthermore, if $m \leq 2$, then
\begin{enumerate}[(i)]
	\setcounter{enumi}{2}
\item
$\dst \int_0^T\EE\lf[\|\bar Z^\psi_\vep(s) - \bar Z^\psi_\vep (\vr_\vep(s))\|^m\ri]ds \leq   \tilde C^3(T)\lf(\lf(\f{\Delta(\vep)}{\vep}\ri)^{m/2}+\lf(\f{\Delta(\vep)}{\vep}\ri)^m\int_0^T\EE\|\bar Z^\psi_\vep (\vr_\vep(s))\|^{m\bar \alpha}ds\ri),$
	\item
$\dst \int_0^T\EE\lf[\|\bar Z^\psi_\vep(s) - \bar Z^\psi_\vep (\vr_\vep(s))\|^m\ri]ds \leq   \tilde C^4(T)\lf(\lf(\f{\Delta(\vep)}{\vep}\ri)^{m/2}+\lf(\f{\Delta(\vep)}{\vep}\ri)^m\int_0^T\EE\|\bar Z^\psi_\vep (s)\|^{m\bar \alpha}ds\ri).
$
\end{enumerate}	
Here $\tilde C^1, \tilde C^2, \tilde C^3(T), \tilde C^4(T)$ and $\vep_0$ depend only on $\cnst, L_b, \|\s\|_\infty, \nu, \bar \alpha, M, m.$
\end{lemma}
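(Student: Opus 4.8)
The plan is to estimate the increment $\bar Z^\psi_\vep(s) - \bar Z^\psi_\vep(\vr_\vep(s))$ directly from the stochastic equation \eqref{ctrl-EuSDE1}. Writing out the difference, one has
\begin{align*}
\bar Z^\psi_\vep(s) - \bar Z^\psi_\vep(\vr_\vep(s)) = \f{1}{\vep}\int_{\vr_\vep(s)}^s b(\bar Z^\psi_\vep(\vr_\vep(r)))dr + \f{1}{\sqrt\vep}\int_{\vr_\vep(s)}^s \s(\bar Z^\psi_\vep(\vr_\vep(r)))dW(r) + \f{\mdpsc}{\vep}\int_{\vr_\vep(s)}^s \s(\bar Z^\psi_\vep(\vr_\vep(r)))\psi(r)dr.
\end{align*}
Since on the interval $[\vr_\vep(s),s)$ the integrand $\vr_\vep(r)$ is constant and equal to $\vr_\vep(s)$, all three integrands are $\SC{F}_{\vr_\vep(s)}$-measurable. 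I would then take the $m$-th power and conditional expectation given $\SC{F}_{\vr_\vep(s)}$, using the elementary inequality $\|x_1+x_2+x_3\|^m \le 3^{m-1}(\|x_1\|^m+\|x_2\|^m+\|x_3\|^m)$ to split into three terms. For the drift term, Condition \ref{cond_SDE_coef-ex}-(ii) gives $\|b(\bar Z^\psi_\vep(\vr_\vep(s)))\| \le \cnst(1+\|\bar Z^\psi_\vep(\vr_\vep(s))\|^{\bar\alpha})$, and the interval has length at most $\Delta(\vep)$, producing a bound of order $(\Delta(\vep)/\vep)^m(1+\|\bar Z^\psi_\vep(\vr_\vep(s))\|^{m\bar\alpha})$. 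For the martingale term, the Burkholder–Davis–Gundy inequality together with boundedness of $\s$ gives a bound of order $(\Delta(\vep)/\vep^{1/2})^m = (\Delta(\vep)/\vep)^{m/2}\,\Delta(\vep)^{m/2}$, but it is cleaner to keep it as $(\|\s\|_\infty^2 \Delta(\vep)/\vep)^{m/2}$; since $\Delta(\vep)/\vep \to 0$ this is $\le C(\Delta(\vep)/\vep)^{m/2}$, which one checks is dominated by $\mcp^m(\vep) = (\mdpsc\Delta^{1/2}(\vep)/\vep)^m$ precisely when $\mdpsc^m \ge (\Delta(\vep)/\vep)^{m/2}$ — and one should double-check that the intended statement absorbs the BDG term into $\mcp^m(\vep)$; if not, $\mcp(\vep)$ should be read as incorporating a $\sqrt{\Delta(\vep)/\vep}$ contribution as well, which is consistent with the claimed dependence of constants only on $\|\s\|_\infty,\nu,\ldots$. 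For the control term, $\|\int_{\vr_\vep(s)}^s\s(\bar Z^\psi_\vep(\vr_\vep(r)))\psi(r)dr\| \le \|\s\|_\infty \int_{\vr_\vep(s)}^s\|\psi(r)\|dr \le \|\s\|_\infty \Delta(\vep)^{1/2}(\int_0^T\|\psi(r)\|^2dr)^{1/2} \le \|\s\|_\infty M^{1/2}\Delta(\vep)^{1/2}$ by Cauchy–Schwarz and $\psi \in \SC{P}_2^M$; multiplied by $\mdpsc/\vep$ this gives exactly a term of order $(\mdpsc\Delta^{1/2}(\vep)/\vep)^m = \mcp^m(\vep)$. Collecting the three contributions yields (i), for all $\vep \le \vep_0$ (the restriction $\vep \le \vep_0$ is only needed so that $\Delta(\vep)/\vep \le 1$, letting one combine $(\Delta/\vep)^{m/2}$ and $(\Delta/\vep)^m$ under one constant where convenient).

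For (ii), I would start from (i) and take unconditional expectations. The only subtlety is to convert $\EE\|\bar Z^\psi_\vep(\vr_\vep(s))\|^{m\bar\alpha}$ into $\EE\|\bar Z^\psi_\vep(s)\|^{m\bar\alpha}$. This follows by writing $\|\bar Z^\psi_\vep(\vr_\vep(s))\|^{m\bar\alpha} \le 2^{m\bar\alpha}(\|\bar Z^\psi_\vep(s)\|^{m\bar\alpha} + \|\bar Z^\psi_\vep(s) - \bar Z^\psi_\vep(\vr_\vep(s))\|^{m\bar\alpha})$ and applying (i) again (with $m$ replaced by $m\bar\alpha$) to the last term; since $\bar\alpha \le 1$ the extra factor $(\Delta(\vep)/\vep)^{m\bar\alpha}$ is at most $1$ for $\vep$ small, so after a short bootstrap one absorbs it into the constant. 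This gives (ii) with $\tilde C^2$ depending on the same parameters.

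For (iii) and (iv) with $m \le 2$, I would integrate (i) (resp. (ii)) in $s$ over $[0,T]$. The gain of the extra half-power — $(\Delta(\vep)/\vep)^{m/2}$ rather than $(\Delta(\vep)/\vep)^{m}$ on the leading term — comes precisely from the time-averaging estimate \eqref{int-diff-est}: applying it with $\tilde\Delta = \Delta(\vep)$, $\eta = \vr_\vep$, and $h$ an appropriate dominating function, the $\Delta(\vep)$-scale increments, which are individually $O((\Delta(\vep)/\vep)^{m/2}\Delta(\vep)^{m/2-1}\cdot\text{length})$ after using BDG inside, integrate up to $O((\Delta(\vep)/\vep)^{m/2})$ over $[0,T]$; more concretely, one bounds $\int_0^T \EE\|\bar Z^\psi_\vep(s)-\bar Z^\psi_\vep(\vr_\vep(s))\|^m ds$ by splitting the three terms of the increment, using BDG/Jensen for the stochastic piece and \eqref{int-diff-est} or Cauchy–Schwarz (exploiting $m\le 2$ so that $s\mapsto\|\cdot\|^{m/2}$ concavity and $\|h\|_{L^{2/m}}$-type bounds apply) for the drift and control pieces. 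The $\bar\alpha$-moment terms are carried along unintegrated as in the statement. The main obstacle I anticipate is purely bookkeeping: matching the exponents of $\Delta(\vep)/\vep$ exactly as claimed (in particular getting $m/2$ and not $m$ on the leading terms in (iii)–(iv)) requires being careful to apply BDG \emph{before} integrating in $s$ and to use \eqref{int-diff-est} rather than a crude bound for the drift term — a crude bound would only give $(\Delta(\vep)/\vep)^m$. A secondary point to handle with care is that a priori moment bounds $\sup_{\vep\le\vep_0}\sup_{s\le T}\EE\|\bar Z^\psi_\vep(s)\|^k < \infty$ are \emph{not} being assumed here (indeed they are what one proves using this lemma), so the $\bar\alpha$-moments must genuinely be left on the right-hand side and not estimated away.
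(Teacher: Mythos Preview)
Your plan for (i), (iii), (iv) is essentially the paper's: write out the three-term increment, bound each piece (growth of $b$, BDG for the Brownian term, Cauchy--Schwarz for the control term), and for (iii)--(iv) integrate the \emph{pre-final} bound (the paper's display \eqref{diff-prelim}) over $[0,T]$, using H\"older with exponent $2/m$ and then \eqref{int-diff-est} on the control integral $\int_0^T(\int_{\vr_\vep(s)}^s\|\psi\|^2)^{m/2}ds$ to get the improved $(\Delta/\vep)^{m/2}$. One small slip in your (i): the BDG contribution $(\Delta/\vep)^{m/2}$ is dominated by $\mcp^m(\vep)$ exactly when $\sqrt\vep\le\mdpsc$ (equivalently $\mdpsc^m\ge\vep^{m/2}$), not when $\mdpsc^m\ge(\Delta/\vep)^{m/2}$; this is precisely the moderate-deviation scaling $\sqrt\vep/\mdpsc\to 0$, which the paper invokes explicitly at that step.

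For (ii) you take a genuinely different route. The paper does \emph{not} bootstrap via $\|\bar Z(\vr_\vep(s))\|^{m\bar\alpha}\le C(\|\bar Z(s)\|^{m\bar\alpha}+\|\bar Z(s)-\bar Z(\vr_\vep(s))\|^{m\bar\alpha})$; instead it rewrites the drift as $b(\bar Z(\vr_\vep(s)))=b(\bar Z(s))+(b(\bar Z(\vr_\vep(s)))-b(\bar Z(s)))$, uses H\"older continuity of $b$ (and $\nu\le 1$, so $\|x\|^{m\nu}\le 1+\|x\|^m$) to produce a linear self-referential inequality $A\le C(\mcp^m+(\Delta/\vep)^m(\EE\|\bar Z(s)\|^{m\bar\alpha}+A))$ with $A=\EE\|\bar Z(s)-\bar Z(\vr_\vep(s))\|^m$, and solves it once $(\Delta/\vep)^m$ is small. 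Your bootstrap can be made to work (e.g.\ via $x^{\bar\alpha}\le 1+x$ to close the loop), but as written the step ``apply (i) with $m$ replaced by $m\bar\alpha$'' still leaves a $\|\bar Z(\vr_\vep(s))\|^{m\bar\alpha^2}$ term on the right, so the ``short bootstrap'' needs one more ingredient to terminate cleanly. The paper's rewriting-the-drift argument is more direct and avoids this.
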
	

\begin{proof}
Notice that
  \begin{align}
  	\non
  	\bar Z^\psi_\vep(s) - \bar Z^\psi_\vep (\vr_\vep(s)) = &\  \f{1}{\vep} b( \bar Z^\psi_\vep (\vr_\vep(s))(s - \vr_\vep(s) ) + \f{1}{\sqrt \vep} \s(\bar Z^\psi_\vep(\vr_\vep(s))) (W(s) - W(\vr_\vep(s)) \\ \label{eq-diff-0}
  	& \  + \f{\mdpsc}{\vep}  \s(	\bar Z^\psi_\vep(\vr_\vep(s))) \int_{\vr_\vep(s)}^s\psi(r) dr.
  \end{align}	
   Using the facts that (a) for any $m>0$, there exists a constant $\tilde C_m$ such that $\|x+y\|^m \leq \tilde C_m(\|x\|^m + \|y\|^m)$, (b) $\EE(\|W(h)\|^m)= O (h^{m/2})$, (c) $\|b^m(x)\| \leq  \cnst^m(1+\|x\|^{m\bar\alpha})$  and (d)  $\|s - \vr_\vep(s)| \leq \Delta$, we have for some constant $\tilde C^0$.
  \begin{align}
  	\non
  \EE\lf[\|\bar Z^\psi_\vep(s) - \bar Z^\psi_\vep (\vr_\vep(s))\|^m \Big | \SC{F}_{\vr_\vep(s)}\ri] 
         \leq &\  \tilde C^0\lf[\cnst^m(1+\|\bar Z^\psi_\vep (\vr_\vep(s))\|^{m\bar \alpha})\lf(\f{\Delta}{\vep}\ri)^m + \|\s\|_\infty^m \lf(\f{\Delta}{\vep}\ri)^{m/2}\ri. \\ 
    \label{diff-prelim}     
         & \ \lf. + \|\s\|_\infty^m \lf(\f{\mdpsc}{\vep}\ri)^m\EE\lf( \lf(\Delta \int_{\vr_\vep(s)}^s\|\psi(r)\|^2 dr\ri)^{m/2}\Big| \SC{F}_{\vr_\vep(s)}\ri)\ri],\\ \non
         \leq & \tilde C^0\Big(\cnst^m \lf(1+\|\bar Z^\psi_\vep (\vr_\vep(s))\|^{m\bar \alpha}\ri) \lf(\f{\Delta}{\vep}\ri)^m + \|\s\|_\infty^m \lf(\f{\Delta}{\vep}\ri)^{m/2} \\ \non
         & \ + \|\s\|_\infty^m M^{m/2}\lf(\f{\mdpsc \Delta^{1/2}}{\vep}\ri)^m \Big)\\ 
         \label{diff-est-1}
          \leq &\ \tilde C^1\lf(\mcp^m(\vep)+\|\bar Z^\psi_\vep (\vr_\vep(s))\|^{m\bar \alpha}(\Delta/\vep)^m\ri),
         \end{align}      
   where $\mcp(\vep) = \mdpsc\Delta^{1/2}/\vep.$ The last inequality follows because
   \begin{align*}
   	\lf(\Delta/\vep\ri)^{1/2} =\f{\sqrt \vep}{\mdpsc} \mcp(\vep) \leq \mcp(\vep),\quad \Delta/\vep \leq (\Delta/\vep)^{1/2}\mcp(\vep) \leq \mcp(\vep).
   	\end{align*} 
   	Recall that $\sqrt \vep/\mdpsc \rt 0$ and $\Delta/\vep \rt 0$ as $\vep \rt 0$, and we can assume $\mdpsc$ and $\Delta(\vep)$ are such that $\max\{\sqrt \vep/\mdpsc, \Delta/\vep\} \leq 1$. This proves $(i)$.

To prove (ii), notice that writing $b(\bar Z^\psi_\vep (\vr_\vep(s)) = b(\bar Z^\psi_\vep(s) +(b(\bar Z^\psi_\vep (\vr_\vep(s))- b(\bar Z^\psi_\vep(s))$ in \eqref{eq-diff-0}, it follows from the H\"older continuity of $b$, and the fact that H\"older exponent $\nu \leq 1$, that
\begin{align*}
\EE\lf[\|\bar Z^\psi_\vep(s) - \bar Z^\psi_\vep (\vr_\vep(s))\|^m\ri] \leq &\   \tilde C^2 \lf(\mcp^m(\vep)+(\Delta/\vep)^{m}\lf(\EE \|\bar Z^\psi_\vep (s)\|^{m\bar \alpha}+\EE \|\bar Z^\psi_\vep (s) -\bar Z^\psi_\vep (\vr_\vep(s)) \|^{m}\ri)  \ri).
\end{align*}
for some constant $ \tilde C^2$.
Choosing  $\vep_0$ sufficiently small so that for all $\vep \leq \vep_0$, $\tilde C^2 (\Delta/\vep)^m \leq 1/2$, we have $(ii)$.

For $(iii)$ and $(iv)$ first notice that by Holder's inequality (with $p = 2/m$) and \eqref{int-diff-est}
\begin{align*}
\int_0^T \lf( \int_{\vr_\vep(s)}^s\|\psi(r)\|^2 dr\ri)^{m/2}ds \leq &\ T^{1-m/2} \lf(\int_0^T  \int_{\vr_\vep(s)}^s\|\psi(r)\|^2 dr ds\ri)^{m/2}\\
\leq &\ T^{1-m/2} \lf(\Delta \int_0^T  \|\psi(r)\|^2  ds\ri)^{m/2}\\
\leq &\ M^{m/2}T^{1-m/2} \Delta^{m/2}.
\end{align*}	
The assertion $(iii)$ now follows after integrating both sides in \eqref{diff-prelim} and from the simple observation
$\max\{\mdpsc\Delta/\vep, (\Delta/\vep)^{1/2}\} \leq (\Delta/\vep)^{1/2}$. (iv) now follows using the same splitting used above to obtain (ii).

\end{proof}	

\np
The following is the corresponding result for the original process $Z^\vep$.
\begin{lemma}\label{lem-diff-est-Z}
	Let $ Z^\vep$ be as in \eqref{SDE-Eu-1}, and assume that Condition \ref{cond_SDE_coef-ex} holds, and that $\s$ is bounded. Let $\Delta(\vep)$ be such that $\Delta(\vep)/\vep \rt 0$ as $\vep \rt 0$. Then there exist constants $\hat C^1$,  $\hat C^2$ and $\vep_0$ such that for all $\vep \leq \vep_0$, 
	\begin{enumerate}[(i)]
		\item $\dst
		\EE\lf[\| Z^\vep(s) -  Z^\vep (\vr_\vep(s))\|^m \Big | \SC{F}_{\vr_\vep(s)}\ri]  \leq    \hat C^1(\Delta/\vep)^{m/2}\lf(1+ \| Z^\vep (\vr_\vep(s))\|^{m \bar \alpha}\ri),$
		\item $\dst
		\EE\lf[\| Z^\vep(s) -  Z^\vep (\vr_\vep(s))\|^m  \ri]  \leq    \hat C^2(\Delta/\vep)^{m/2}\lf(1+\EE \| Z^\vep (s))\|^{m \bar \alpha}\ri),$
	\end{enumerate}	
	Here $\hat C^1, \hat C^2$ and $\vep_0$ depend only on $\cnst, L_b, \|\s\|_\infty, \nu, \bar \alpha, M, m.$
\end{lemma}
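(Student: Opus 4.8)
The plan is to run the argument of Lemma \ref{lem-diff-est} in the special case $\mdpsc = 0$ (i.e.\ with no control term), since $Z^\vep$ solves \eqref{SDE-Eu-1}, the same equation as $\bar Z^\psi_\vep$ in \eqref{ctrl-EuSDE1} but without the $\frac{\mdpsc}{\vep}\int \s(\cdot)\psi$ drift. Concretely, from \eqref{SDE-Eu-1} one has, for $s\in[0,T]$, the analogue of \eqref{eq-diff-0},
\[
Z^\vep(s) - Z^\vep(\vr_\vep(s)) = \f{1}{\vep}\, b(Z^\vep(\vr_\vep(s)))\,(s-\vr_\vep(s)) + \f{1}{\sqrt\vep}\,\s(Z^\vep(\vr_\vep(s)))\,(W(s) - W(\vr_\vep(s))),
\]
with only two terms to control instead of three.

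For part (i) I would take $\|\cdot\|^m$, apply $\|x+y\|^m \le \tilde C_m(\|x\|^m + \|y\|^m)$, use $\|b(x)\|^m \le \cnst^m(1+\|x\|^{m\bar\alpha})$ from Condition \ref{cond_SDE_coef-ex}-(ii), $|s-\vr_\vep(s)| \le \Delta$, boundedness of $\s$, and then condition on $\SC{F}_{\vr_\vep(s)}$, using $\EE\|W(h)\|^m = O(h^{m/2})$ and the fact that $Z^\vep(\vr_\vep(s))$ is $\SC{F}_{\vr_\vep(s)}$-measurable. This yields
\[
\EE\!\lf[\|Z^\vep(s) - Z^\vep(\vr_\vep(s))\|^m \,\big|\, \SC{F}_{\vr_\vep(s)}\ri] \le \tilde C^0\lf[\cnst^m(1+\|Z^\vep(\vr_\vep(s))\|^{m\bar\alpha})(\Delta/\vep)^m + \|\s\|_\infty^m (\Delta/\vep)^{m/2}\ri].
\]
Since $\Delta(\vep)/\vep \rt 0$, choosing $\vep_0$ so that $\Delta/\vep \le 1$ for $\vep \le \vep_0$ gives $(\Delta/\vep)^m \le (\Delta/\vep)^{m/2}$, and collecting constants (which depend only on $\cnst, L_b, \|\s\|_\infty, \nu, \bar\alpha, M, m$) produces (i). For part (ii) I would, exactly as in Lemma \ref{lem-diff-est}, split $b(Z^\vep(\vr_\vep(s))) = b(Z^\vep(s)) + \big(b(Z^\vep(\vr_\vep(s))) - b(Z^\vep(s))\big)$, bound the increment by H\"older continuity, $\|b(Z^\vep(\vr_\vep(s))) - b(Z^\vep(s))\| \le L_b\|Z^\vep(\vr_\vep(s)) - Z^\vep(s)\|^\nu$, and use $m\nu \le m$ so that $\EE\|Z^\vep(\vr_\vep(s)) - Z^\vep(s)\|^{m\nu} \le 1 + \EE\|Z^\vep(\vr_\vep(s)) - Z^\vep(s)\|^m$. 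Taking expectations then gives an inequality of the form $\EE\|Z^\vep(s)-Z^\vep(\vr_\vep(s))\|^m \le C(\Delta/\vep)^{m/2}(1+\EE\|Z^\vep(s)\|^{m\bar\alpha}) + C(\Delta/\vep)^m\,\EE\|Z^\vep(s)-Z^\vep(\vr_\vep(s))\|^m$, and shrinking $\vep_0$ so that $C(\Delta/\vep)^m \le 1/2$ lets one absorb the last term into the left side.

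I expect no genuine obstacle here, since the statement is strictly simpler than Lemma \ref{lem-diff-est} (which has already been proved) — this is presumably why the authors state it without proof. The one point I would be careful about is that the absorption step in (ii) is only legitimate once $\EE\|Z^\vep(s)-Z^\vep(\vr_\vep(s))\|^m < \infty$ is known, equivalently once the moments $\EE\|Z^\vep(s)\|^{m\bar\alpha}$ are known finite; this should be invoked from the a priori moment bounds on $Z^\vep$ established elsewhere in the paper (or, failing that, obtained first by a stopping-time truncation and a Fatou/monotone-convergence passage). Everything else is a routine specialization of the earlier computation.
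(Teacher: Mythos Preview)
Your proposal is correct and matches the paper's approach: the paper states Lemma~\ref{lem-diff-est-Z} without proof precisely because it is the uncontrolled specialization of Lemma~\ref{lem-diff-est}, and your argument reproduces exactly the computations \eqref{eq-diff-0}--\eqref{diff-est-1} with the $\psi$-term dropped, followed by the same splitting-and-absorption trick for part~(ii). Your caveat about needing finiteness of $\EE\|Z^\vep(s)-Z^\vep(\vr_\vep(s))\|^m$ before absorbing is a fair point of rigor that the paper also leaves implicit.
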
	
Note that, in particular, for come constants $\hat C^3$ and $\hat C^4$,
\begin{align}\label{est-mmt-comp}
	\EE(\| Z^\vep(s))\|^{m \bar \alpha}) \leq&\ \hat C^3(1+E\| Z^\vep (\vr_\vep(s)))\|^{m \bar \alpha}), \quad
\EE(\| Z^\vep (\vr_\vep(s))\|^{m \bar \alpha}) \leq\ \hat C^4(1+E\| Z^\vep (s))\|^{m \bar \alpha}).
\end{align}	

For the proof of central limit theorem, we also need a similar result corresponding to a coarser partition than $\{t_k\}$. A similar estimate in the controlled setting is also needed for the MDP result, and this is discussed in Remark \ref{int-disc-est-1}.
\begin{lemma} \label{lem-diff-est-Z-2}
Assume the hypotheses of Lemma \ref{lem-diff-est-Z}. Let $\{\tilde t_k\}$ be a partition of $[0,t]$ such that $\tilde t_k  - \tilde t_{k-1} = \tilde \Delta \equiv  \tilde \Delta(\vep)$, and $\eta_\vep$ the corresponding step function, that is, $\eta_\vep(s) = \tilde t_{k},$ if $\tilde t_k \leq s <\tilde t_{k+1}$. Then there exist costants $\hat C^3,$  $\hat C^4$ and  $\vep_0$ such that for all $\vep \leq \vep_0$, 
\begin{enumerate}[(i)]
	\item $\dst
	\int_0^T\EE\lf[\| Z^\vep(s) -  Z^\vep (\eta_\vep(s))\|^m \ri]ds  \leq    \hat C^5(T)(\tilde\Delta/\vep)^{m/2}\lf(1+ \int_0^T\EE\| Z^\vep (\vr_\vep(s))\|^{m \bar \alpha}ds\ri),$
	\item $\dst
	\int_0^T\EE\lf[\| Z^\vep(s) -  Z^\vep (\eta_\vep(s))\|^m  \ri]ds  \leq    \hat C^6(T)(\tilde\Delta/\vep)^{m/2}\lf(1+\int_0^T\EE \| Z^\vep (s))\|^{m \bar \alpha}ds\ri),$
\end{enumerate}	
The constants $\hat C^5(T), \hat C^6(T)$ and $\vep_0$ depend only on $\cnst, L_b, \|\s\|_\infty, \nu, \bar \alpha, m.$
\end{lemma}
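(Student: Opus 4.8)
The plan is to reduce the estimate over the coarser partition $\{\tilde t_k\}$ to the already-established one-step estimates from Lemma \ref{lem-diff-est-Z}, using the telescoping observation \eqref{int-diff-est}. First I would write, for $s$ with $\tilde t_k \le s < \tilde t_{k+1}$,
\begin{align*}
Z^\vep(s) - Z^\vep(\eta_\vep(s)) = &\ \f{1}{\vep}\int_{\eta_\vep(s)}^s b(Z^\vep(\vr_\vep(r)))dr + \f{1}{\sqrt\vep}\int_{\eta_\vep(s)}^s \s(Z^\vep(\vr_\vep(r)))dW(r).
\end{align*}
Using the inequality $\|x+y\|^m \le \tilde C_m(\|x\|^m+\|y\|^m)$, the Burkholder--Davis--Gundy inequality for the stochastic integral term, Jensen's inequality (or Hölder with exponent $2/m$ when $m\le 2$) applied to the drift term together with $|s-\eta_\vep(s)|\le\tilde\Delta$, and the linear growth bound $\|b(x)\|\le\cnst(1+\|x\|^{\bar\alpha})$ from Condition \ref{cond_SDE_coef-ex}, I would obtain
\begin{align*}
\EE\|Z^\vep(s)-Z^\vep(\eta_\vep(s))\|^m \le &\ C\lf(\lf(\f{\tilde\Delta}{\vep}\ri)^m\int_{\eta_\vep(s)}^s\EE\|b(Z^\vep(\vr_\vep(r)))\|^m dr \cdot \tilde\Delta^{-1} + \lf(\f{\tilde\Delta}{\vep}\ri)^{m/2}\|\s\|_\infty^m\ri),
\end{align*}
where the drift contribution, once integrated in $s$ via \eqref{int-diff-est}, picks up an extra factor $\tilde\Delta$ that combines with $(\tilde\Delta/\vep)^m$; since $\tilde\Delta/\vep\to 0$ we have $(\tilde\Delta/\vep)\le(\tilde\Delta/\vep)^{1/2}$ for small $\vep$, so the drift term is dominated by $(\tilde\Delta/\vep)^{m/2}$ times the relevant moment integral. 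This yields (i) after collecting constants, with the moments $\EE\|Z^\vep(\vr_\vep(r))\|^{m\bar\alpha}$ appearing on the right.

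For part (ii) I would simply convert the $\vr_\vep$-moments into $s$-moments using \eqref{est-mmt-comp}: $\EE\|Z^\vep(\vr_\vep(r))\|^{m\bar\alpha}\le\hat C^4(1+\EE\|Z^\vep(r)\|^{m\bar\alpha})$, integrate over $[0,T]$, and absorb the resulting constant. Alternatively, and perhaps more cleanly, one can split $b(Z^\vep(\vr_\vep(r))) = b(Z^\vep(s)) + (b(Z^\vep(\vr_\vep(r))) - b(Z^\vep(s)))$ exactly as in the proof of Lemma \ref{lem-diff-est}(ii)--(iv), bound the Hölder increment using $\nu\le 1$, and choose $\vep_0$ small enough that the self-referential term is absorbed into the left side with coefficient $1/2$.

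The only mildly delicate point — not really an obstacle — is bookkeeping the powers of $\tilde\Delta/\vep$: the drift term naturally produces $(\tilde\Delta/\vep)^m$ (times an extra $\tilde\Delta$ after the order-of-integration trick), which must be shown to be dominated by the diffusion term's $(\tilde\Delta/\vep)^{m/2}$; this is immediate from $\tilde\Delta/\vep\to 0$, mirroring the last display in the proof of Lemma \ref{lem-diff-est}. Everything else is a routine repetition of the arguments already given for Lemma \ref{lem-diff-est-Z} with the finer partition $\{t_k\}$ replaced by the coarser $\{\tilde t_k\}$, so I would state the proof tersely and refer back to those computations, only flagging the use of \eqref{int-diff-est} to handle the integration in $s$.
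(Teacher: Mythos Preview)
Your proposal is correct and follows essentially the same route as the paper: write $Z^\vep(s)-Z^\vep(\eta_\vep(s))$ via the SDE, bound the drift using the growth condition on $b$ and H\"older's inequality, bound the diffusion via boundedness of $\s$ and the BDG/moment inequality, integrate in $s$ using \eqref{int-diff-est}, and derive (ii) from (i) through \eqref{est-mmt-comp}. The only cosmetic difference is that the paper restricts to $m\ge 1$ at the outset (needed for the H\"older step) and presents the computation slightly more compactly, whereas you also mention the alternative self-referential splitting from Lemma~\ref{lem-diff-est}(ii) for part (ii); both are fine.
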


\begin{proof}
Let $m \geq 1$. Then for some constant $\hat K_0$ 
\begin{align*}
	\| Z^\vep(t) -  Z^\vep (\eta_\vep(t))\|^m = &\ \hat K_0\lf[ \f{1}{\vep^m} \lf(\int_{\eta_\vep(t)}^t \cnst(1+ \| Z^\vep (\vr_\vep(s))\|^{\bar\alpha} )ds\ri)^m + \f{1}{ \vep^{m/2}} \lf\|\int_{\eta_\vep(t)}^t \s( Z^\vep(\vr_\vep(s))) dW(s)\ri\|^m\ri]. 
\end{align*}
Taking expectation and applying H\"older's inequality
\begin{align*}
	\EE\| Z^\vep(t) -  Z^\vep (\eta_\vep(t))\|^m \leq &  \hat K_0\lf[ \f{ \cnst^m\tilde\Delta^{m-1}}{\vep^m} \int_{\eta_\vep(t)}^t \EE(1+ \| Z^\vep (\vr_\vep(s))\|^{\bar\alpha} )^m ds + \|\s\|^m_\infty\lf(\f{ \tilde \Delta}{ \vep}\ri)^{m/2}\ri].
\end{align*}	
The first assertion now follows by integrating both sides over $[0,T]$ and using  \eqref{int-diff-est}, and the second by \eqref{est-mmt-comp}.

\end{proof}

%

\setcounter{equation}{0}
\renewcommand {\theequation}{\arabic{section}.\arabic{equation}}
\section{Tightness results} \label{sec:tight}


%
 	  	\begin{lemma}\label{ctrlproc-bd-2}
 	  		Suppose that  $\bar Z^\psi_\vep$ satisfies \eqref{ctrl-EuSDE1} and that $\Delta(\vep)/\vep \rt 0$. Assume that Condition \ref{cond_SDE_inv} (with $\alpha>0$) and Condition \ref{cond_SDE_coef-ex} hold. Then for all $M>0$, there exists an $\vep_0>0$, such that
 	  		$$\sup_{\vep \in (0,\vep_0]} \sup_{\psi \in \SC{P}_2^M} \EE\lf[\int_0^T \|\bar Z^\psi_\vep(t)\|^{2\alpha} dt \ri] <\infty. $$
 	  	\end{lemma}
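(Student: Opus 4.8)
The plan is to apply It\^o's formula to $\|\bar Z^\psi_\vep(t)\|^{2\alpha}$ (or, when $\alpha$ is small and $x\mapsto\|x\|^{2\alpha}$ is not smooth at the origin, to $(1+\|x\|^2)^\alpha$ which is globally $C^2$ and comparable to $\|x\|^{2\alpha}$ for large $\|x\|$), and then use the recurrence Condition~\ref{cond_SDE_inv}(i) to extract a negative drift term that controls the positive contributions. The key structural point is that the leading drift term in \eqref{ctrl-EuSDE1}, namely $\vep^{-1}b(\bar Z^\psi_\vep(\vr_\vep(s)))$, produces, after applying It\^o, a term of the form $\vep^{-1}\langle \nabla(\|x\|^{2\alpha})(\bar Z^\psi_\vep(s)),\, b(\bar Z^\psi_\vep(\vr_\vep(s)))\rangle$. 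Replacing $\bar Z^\psi_\vep(\vr_\vep(s))$ by $\bar Z^\psi_\vep(s)$ inside $b$ (at the cost of an error controlled by Lemma~\ref{lem-diff-est}) and using $\langle x, b(x)\rangle \leq -\g\|x\|^{1+\alpha}$ for $\|x\|>B$, this term is bounded above, up to constants, by $-\const\,\vep^{-1}\|\bar Z^\psi_\vep(s)\|^{3\alpha-1} + (\text{lower order})$. Meanwhile the Brownian and control terms, each carrying the prefactor $\vep^{-1/2}$ or $\mdpsc/\vep$, after It\^o contribute terms of order $\vep^{-1}\|\bar Z^\psi_\vep(s)\|^{2\alpha-2}$ (from the $D^2$-term applied to $\sigma\sigma^T$), plus martingale terms with zero expectation, plus control terms handled via Cauchy--Schwarz using $\int_0^T\|\psi(s)\|^2ds\le M$.

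The execution I have in mind: write the It\^o expansion, take expectations to kill the martingale part, and organize the right-hand side as $\EE\|\bar Z^\psi_\vep(T)\|^{2\alpha} \leq C + \vep^{-1}\int_0^T \EE\big[ g(\bar Z^\psi_\vep(s))\big]\,ds$ where $g(x) \leq -c_1\|x\|^{3\alpha-1} + c_2\|x\|^{2\alpha} + c_3\|x\|^{2\alpha-2} + c_4$ for suitable constants (the $\|x\|^{2\alpha}$ coming from the discretization error of $b$, since $|b(x)-b(\vr_\vep\text{-value})| $ times $\|x\|^{2\alpha-1}$ times $(\Delta/\vep)$ is, after Young's inequality and using $\Delta/\vep\to0$, absorbable). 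Since $\alpha>0$ implies $3\alpha-1 > 2\alpha-1$, i.e.\ the negative term dominates all positive powers of $\|x\|$ for large $\|x\|$ once $3\alpha-1>\max\{2\alpha,2\alpha-2\}$; this requires $\alpha>1$. For $0<\alpha\le 1$ one instead uses that the genuinely useful negative contribution, before worrying about the exponent arithmetic, is that $\langle x,b(x)\rangle\le -\g\|x\|^{1+\alpha}$ makes $\vep^{-1}\cdot\|x\|^{2\alpha-2}\cdot\langle x, b(x)\rangle \le -\g\vep^{-1}\|x\|^{3\alpha-1}$, and one should test with $(1+\|x\|^2)^{\alpha}$ whose gradient is $2\alpha(1+\|x\|^2)^{\alpha-1}x$, so the drift term becomes $2\alpha\vep^{-1}(1+\|x\|^2)^{\alpha-1}\langle x,b(x)\rangle \le -2\alpha\g\vep^{-1}(1+\|x\|^2)^{\alpha-1}\|x\|^{1+\alpha}$, which for large $\|x\|$ behaves like $-\const\,\vep^{-1}\|x\|^{3\alpha-1}$, and one checks $3\alpha-1 \ge 2\alpha$ iff $\alpha\ge 1$ — so in fact the clean comparison needs care. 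The robust route, independent of these exponent inequalities, is a Gronwall/absorption argument: bound all the positive-power terms of degree $<3\alpha-1$ (or more simply, of degree $\le 2\alpha$) by $\tfrac{1}{2}\g(1+\|x\|^2)^{\alpha-1}\|x\|^{1+\alpha} + C_\delta$ via Young's inequality when $2\alpha < 3\alpha-1$, and otherwise — when $\alpha\le1$ — keep a term $+c\,\vep^{-1}\int_0^T\EE(1+\|\bar Z^\psi_\vep(s)\|^2)^\alpha\,ds$ on the right and close the estimate using the \emph{negative} $\vep^{-1}$-term, which beats it for $\|x\|$ large, combined with the trivial bound on a compact set; this yields $\vep^{-1}\int_0^T\EE[\cdots]\,ds \le C(T)\vep^{-1}\cdot\vep = C(T)$ after noticing the negative drift forces $\int_0^T\EE\|\bar Z^\psi_\vep(s)\|^{2\alpha}\,ds = O(\vep)$ times harmless factors — wait, more precisely one rearranges to get $\g\vep^{-1}\int_0^T\EE[(1+\|\bar Z^\psi_\vep(s)\|^2)^{\alpha-1}\|\bar Z^\psi_\vep(s)\|^{1+\alpha}]\,ds \le C + C(T)$, and since $(1+\|x\|^2)^{\alpha-1}\|x\|^{1+\alpha} \ge \const(1+\|x\|^2)^\alpha - \const$ (checking large and bounded $\|x\|$ separately), this gives the desired uniform bound on $\int_0^T\EE\|\bar Z^\psi_\vep\|^{2\alpha}\,ds$ with the $\vep^{-1}$ cancelling against the constant absorbed into $C(T)$.

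Concretely the steps are: (1) fix $M$, $\psi\in\SC{P}_2^M$; (2) apply It\^o to $V(x)=(1+\|x\|^2)^\alpha$ for $\bar Z^\psi_\vep$, writing $b$ at $\bar Z^\psi_\vep(s)$ plus a remainder; (3) take $\EE$, drop the martingale; (4) use Condition~\ref{cond_SDE_inv}(i),(ii), the growth bound Condition~\ref{cond_SDE_coef-ex}(ii), Lemma~\ref{lem-diff-est}(iii)--(iv) for the discretization remainder, and Young's inequality plus $\int_0^T\|\psi\|^2\le M$ to bound the control term; (5) collect into $\EE V(\bar Z^\psi_\vep(T)) + \tfrac{c}{\vep}\int_0^T\EE[(1+\|\bar Z^\psi_\vep(s)\|^2)^{\alpha-1}\|\bar Z^\psi_\vep(s)\|^{1+\alpha}]\,ds \le V(x_0) + C(T) + \tfrac{c'}{\vep}\int_0^T\EE[\text{remainder}(\bar Z^\psi_\vep(s))]\,ds$ where the remainder has strictly lower growth, absorb it into the left side for $\vep$ small, and conclude; (6) finally compare $(1+\|x\|^2)^{\alpha-1}\|x\|^{1+\alpha}$ with $\|x\|^{2\alpha}$ to get $\sup_{\vep\le\vep_0}\sup_\psi \EE\int_0^T\|\bar Z^\psi_\vep(t)\|^{2\alpha}\,dt < \infty$.

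\textbf{Main obstacle.} The delicate point is the interplay between the singular $\vep^{-1}$ and $\vep^{-1/2}$ prefactors and the discretization error: the drift is evaluated at $\vr_\vep(s)$, not $s$, so one cannot directly invoke $\langle x,b(x)\rangle\le-\g\|x\|^{1+\alpha}$; one must first transfer $b(\bar Z^\psi_\vep(\vr_\vep(s)))$ to $b(\bar Z^\psi_\vep(s))$, and the resulting error, multiplied by the $\vep^{-1}$ prefactor and by $\|x\|^{2\alpha-1}$ from $\nabla V$, is a priori of the \emph{same} singular order $\vep^{-1}$ as the good negative term, so it must be shown $o(1)$ relative to it — this is exactly where the hypothesis $\Delta(\vep)/\vep\to0$ (and Lemma~\ref{lem-diff-est}) is essential, and getting the exponents and the $\vep$-powers to line up cleanly, uniformly over $\psi\in\SC{P}_2^M$, is the technical heart of the argument. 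A secondary subtlety is that the bound must be uniform in $\psi$ over the ball $\SC{P}_2^M$, which is why the control term must be handled by the deterministic bound $\int_0^T\|\psi(s)\|^2\,ds\le M$ together with Cauchy--Schwarz rather than any pathwise estimate.
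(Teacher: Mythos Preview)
Your overall strategy --- It\^o's formula, use the recurrence condition to extract a dominant negative drift, control the discretization error via Lemma~\ref{lem-diff-est} and $\Delta(\vep)/\vep\to 0$, and handle the control term by Young's inequality together with $\int_0^T\|\psi\|^2\,ds\le M$ --- is exactly right and matches the paper's proof. The gap is in the choice of test function. You apply It\^o to $V(x)=(1+\|x\|^2)^\alpha$; the paper applies it to (a smoothed version of) $\|x\|^{1+\alpha}$, and this exponent is not arbitrary.

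With $V\sim\|x\|^{1+\alpha}$ one has $\|\nabla V\|\sim\|x\|^{\alpha}$, so the drift term produces $-\g\,\vep^{-1}\|x\|^{2\alpha}$, which is \emph{exactly} the quantity to be bounded, and the control term via Young gives $\mdpsc\,\vep^{-1}\bigl(\|x\|^{2\alpha}+\|\psi\|^2\bigr)$, whose first piece is absorbed since $\mdpsc\to 0$. The discretization error, after the weighted Young split the paper uses, likewise produces $\|x\|^{2\alpha}$ with coefficient $(\Delta/\vep)^\nu\to 0$. All exponents match and the estimate closes.

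With your $V\sim\|x\|^{2\alpha}$ one has $\|\nabla V\|\sim\|x\|^{2\alpha-1}$, so the negative drift is $-c\,\vep^{-1}\|x\|^{3\alpha-1}$. Two failures follow. First, for $0<\alpha<1$ one has $3\alpha-1<2\alpha$, so your step~(6) breaks: the inequality $(1+\|x\|^2)^{\alpha-1}\|x\|^{1+\alpha}\ge c\|x\|^{2\alpha}-C$ is false for large $\|x\|$, and a bound on $\int_0^T\EE\|\bar Z^\psi_\vep\|^{3\alpha-1}\,ds$ does not yield one on $\int_0^T\EE\|\bar Z^\psi_\vep\|^{2\alpha}\,ds$. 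Second, for $\alpha>1$ the control term (and the discretization-error term) after any Young split compatible with $\int_0^T\|\psi\|^2\,ds\le M$ produces a positive $\|x\|^{4\alpha-2}$ contribution, and $4\alpha-2>3\alpha-1$, so step~(5) cannot close. Your own hesitation (``so in fact the clean comparison needs care'') is well-founded, but the proposed fix for $\alpha\le 1$ --- that the negative $\|x\|^{3\alpha-1}$ term ``beats'' $(1+\|x\|^2)^\alpha\sim\|x\|^{2\alpha}$ for large $\|x\|$ --- is simply false when $\alpha<1$.

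The repair is a one-line change: use $V(x)=\vart(\|x\|^2)$ with $\vart(r)=r^{(1+\alpha)/2}$ for $r>1$ and $\vart$ smooth near $0$. Then your steps (1)--(5) go through with the exponent bookkeeping now consistent for every $\alpha>0$, and step~(6) becomes trivial.
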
	
 	  	
 	  	 	\begin{proof} The main idea is to use It\^o's lemma to the function $x \rt \|x\|^{(1+\alpha)/2}$ and then obtain estimates on different expectations. However, if $\alpha < 2$, some technical issues arise (because of singularity of the map $x \rt \|x\|^{\alpha-2}$ at origin) for obtaining bounds on certain terms. One way to avoid them is to use a  $C^\infty\lf([0,\infty), [0,\infty)\ri)$ - function $\vart$ defined by 	  	 		\begin{align*}
 	  	 			\vart(x)=
 	  	 			\begin{cases}
 	  	 				x^{(1+\alpha)/2},& \quad {x>1}\\
 	  	 				0,& \quad{ 0<x<0.9}.
 	  	 			\end{cases}      
 	  	 		\end{align*}
 	  	 	  For notational convenience, we will drop the superscript $\psi$ and use $\bar Z_\vep$ instead of $\bar Z^\psi_\vep.$
 	  	 	
 	  	 	By It\^o's lemma,
 	  	 	\begin{align}\non
 	  	 		\vart(\|\bar Z_\vep(t)\|^2)  = & \vart(\|x_0\|^2) + \mar_\vep(t)+ \int_0^t \vart'(\|\bar Z_\vep(s)\|^2) d\|\bar Z_\vep(s)\|^2 +\f{1}{2}\int_0^t  \vart''(\|\bar Z_\vep(s)\|^2)d[\|\bar Z_\vep\|^2]_s\\ \non
 	  	 		=& \vart(\|x_0\|^2)+ \mar_\vep(t) + \f{2}{\vep} \int_0^t   \vart'(\|\bar Z_\vep(s)\|^2) \<\bar Z_\vep(s), b(\bar Z_\vep(\vr_\vep(s)))\>ds \\ \non
 	  	 		&+ \f{1}{\vep} \int_0^t  \vart'(\|\bar Z_\vep(s)\|^2) \|\s(\bar Z_\vep(\vr_\vep(s)))\|^2\ ds\\ \non
 	  	 		&+  \f{2\mdpsc}{\vep}  \int_0^t \vart'(\|\bar Z_\vep(s)\|^2) \<\bar Z_\vep(s), \s(\bar Z_\vep(\vr_\vep(s))) \psi(s)\> ds\\ \label{Ito-l-MC}
 	  	 		&+\f{1}{\vep}\int_0^t \vart''(\|\bar Z_\vep(s)\|^2)\|\bar Z_\vep(s)\s(\bar Z_\vep(\vr_\vep(s)))\|^2 ds,	\\ \non
 	  	 		\leq &\ \vart(\|x_0\|^2)+ \mar_\vep(t) + \f{2}{\vep} \int_0^t   \vart'(\|\bar Z_\vep(s)\|^2) \<\bar Z_\vep(s), b(\bar Z_\vep(s))\>ds \\ \non
 	  	 		& + \f{2}{\vep} \int_0^t   \vart'(\|\bar Z_\vep(s)\|^2) \<\bar Z_\vep(s), b(\bar Z_\vep(\vr_\vep(s)) - b(\bar Z_\vep(s))\>ds\\ \non
 	  	 		&+ \f{\|\s\|_\infty^2}{\vep} \int_0^t  \vart'(\|\bar Z_\vep(s)\|^2) ds
 	  	 		+  \f{2\mdpsc \|\s\|_\infty}{\vep}  \int_0^t \vart'(\|\bar Z_\vep(s)\|^2)\|\bar Z_\vep(s)\| \|\psi(s)\| ds\\ \label{Ito-l-MC-2}
 	  	 		&+\f{\|\s\|_\infty^2}{\vep}\int_0^t \vart''(\|\bar Z_\vep(s)\|^2)\|\bar Z_\vep(s)\|^2 ds,		 		
 	  	 	\end{align}	
 where
 $$\mar_\vep(t) = \f{2}{\sqrt \vep} \int_0^t \vart'(\|\bar Z_\vep(s)\|^2)  \bar Z_\vep(s)^T\s(\bar Z_\vep(\vr_\vep(s)))dW(s)$$
 is a martingale. We now estimate some of these terms individually.
 Let $\bar B = B\vee 1$ ($B$ was introduced in Condition \ref{cond_SDE_inv}-(i)) and
 \begin{align*}
 A_1(t) = &\  \int_0^t \vart'(\|\bar Z_\vep(s)\|^2) | \<\bar Z_\vep(s), b(\bar Z_\vep(\vr_\vep(s)) - b(\bar Z_\vep(s))\> |1_{\{\|\bar Z_\vep(s)\| >\bar B\}}ds;\\
 A_2(t) = &\  \int_0^t \vart'(\|\bar Z_\vep(s)\|^2) | \<\bar Z_\vep(s), b(\bar Z_\vep(\vr_\vep(s)) - b(\bar Z_\vep(s))\> |1_{\{\|\bar Z_\vep(s)\| \leq \bar B\}}ds;
 \end{align*}
 First, observe that using H\"older  continuity of $b$ and  Lemma \ref{lem-diff-est}-(iv) we have for some constant $\hat C^0_B(T)$,
 \begin{align*}
 	\EE(A_2(t)) \leq &\ \|\vart\|_{\infty, \bar B^2} \|\bar B\|L_b \int_0^t\EE\lf(\|\bar Z_\vep(\vr_\vep(s)) -\bar Z_\vep(s)\|^\nu1_{\{\|\bar Z_\vep(s)\| \leq \bar B\}}\ri)ds\\
 	\leq & \|\vart\|_{\infty, \bar B^2} \|\bar B\|L_b\tilde C^4 (\Delta/\vep)^{\nu/2} \lf(1+ (\Delta/\vep)^{\nu/2}\int_0^t\EE\|\bar Z^\psi_\vep (s)\|^{\nu\bar \alpha}ds\ri).
 \end{align*}
 Next, by (a)  H\"older continuity of $b$, (b) the fact that $|xy| \leq \f{1}{2}\lf(\theta|x|^2 + |y|^2/\theta\ri)$ for any $\theta>0$, and (c) Lemma \ref{lem-diff-est} - (iv),
 \begin{align*}
  \EE(A_1(t)) \leq & L_b(1+\alpha)\EE\int_0^t\|\bar Z_\vep(s)\|^{\alpha}\|\bar Z_\vep(\vr_\vep(s)) -\bar Z_\vep(s)\|^\nu1_{\{\|\bar Z_\vep(s)\| >\bar B\}}ds \\
   \leq & \  \f{L_b(1+\alpha)}{2}\int_0^t\lf(\EE\lf(\|\bar Z_\vep(s)\|^{2\alpha}\ri)(\Delta/\vep)^{\nu}+ \EE(\|\bar Z_\vep(\vr_\vep(s)) -\bar Z_\vep(s)\|^{2\nu})(\vep/\Delta)^{\nu}\ri)ds\\
    \leq &\ \f{L_b(1+\alpha)}{2}  \lf( (\Delta/\vep)^{\nu}\int_0^t\EE\|\bar Z_\vep(s)\|^{2\alpha}ds \ri.\\
    &\hs{1cm} \lf.+\tilde C^4(T) (\Delta/\vep)^{\nu}\lf(1+(\Delta/\vep)^{\nu} \int_0^t\EE\|\bar Z_\vep (s)\|^{2\nu\bar \alpha}ds\ri) (\vep/\Delta)^{\nu}\ri) \\
    \leq& \ \hat C^{1,B}(T)  \lf(1+ (\Delta/\vep)^\nu\int_0^t  \EE\|\bar Z_\vep (s)\|^{2\alpha}\ ds \ri).
 \end{align*}
 for some constant $\hat C^{1,B}(T) $ (for the last inequality, we used $\nu\bar \alpha \leq \alpha$). Also,
 \begin{align*}
  \int_0^t \vart'(\|\bar Z_\vep(s)\|^2)\|\bar Z_\vep(s)\| \|\psi(s)\|1_{\{\|\bar Z_\vep(s)\| >\bar B\}} ds            \leq & \f{(1+\alpha)}{2} \int_0^t \|\bar Z_\vep(s)\|^{\alpha} \|\psi(s)\|1_{\{\|\bar Z_\vep(s)\| > \bar B\}} ds\\
                  \leq & \ \f{(1+\alpha)}{4} \int_0^t \lf(\|\bar Z_\vep(s)\|^{2\alpha} + \|\psi(s)\|^2\ri)1_{\{\|\bar Z_\vep(s)\| > \bar B\}} ds\\
                  \leq & \ \f{(1+\alpha)}{4}\int_0^t \|\bar Z_\vep(s)\|^{2\alpha}1_{\{\|\bar Z_\vep(s)\| > \bar B\}}ds + \f{M(1+\alpha)}{4}.
 \end{align*}

 	  	 	Now splitting each term according to $\{\|\bar Z_\vep(s)\| \leq \bar B\}$ and $\{\|\bar Z_\vep(s)\| > \bar B\}$ (and noting that $\nu\bar \alpha \leq \alpha$) it follows that there exists a  constant $\hat C^{2,B}(T)$ such that
	 	\begin{align}
	 		\non
	 	\EE \lf(\vart(\|\bar Z_\vep(t)\|^2)\ri) 	\leq   & \|x_0\|^{1+\alpha} + \hat C^{2,B}(T)\lf(1+ (\Delta/\vep)^\nu\int_0^t  \EE\|\bar Z_\vep (s)\|^{2\alpha}\ ds \ri)/\vep \\ \non
	 			 		&\ - \f{\g(1+\alpha)}{\vep}\int_0^t \EE\lf(\|\bar Z_\vep(s)\|^{2\alpha}1_{\{\|\bar Z_\vep(s)\| > \bar B\}}\ri) ds\\
	 		\non
	 		& + \f{\alpha(1+\alpha)\|\s\|_\infty^2}{2} \int_0^t \EE\lf(\|\bar Z_\vep(s)\|^{\alpha-1}1_{\{\|\bar Z_\vep(s)\| > \bar B\}}\ri) ds \\
	 		\label{ctrl-proc-mmt-bd-int}
	 		&+  \f{ 2\mdpsc \|\s\|_\infty}{\vep} \lf(\f{(1+\alpha)}{4}\int_0^t \EE\|\bar Z_\vep(s)\|^{2\alpha}1_{\{\|\bar Z_\vep(s)\| >\bar B\}}ds + \f{M(1+\alpha)}{4}\ri).
	 	\end{align}

 	Putting things together,   	it follows that
 	   		\begin{align*}
 	   			\int_0^t\EE\lf( \|\bar Z_\vep(s)\|^{2\alpha} 1_{\{\|\bar Z_\vep(s)\| > \bar B\}}\ri)  ds \leq &\ \f{\vep}{\g(1+\alpha)}\|x_0\|^{1+\alpha}+\hat C^{2,B}(T)/\g(1+\alpha) +\f{\mdpsc M(1+\alpha)}{2\g(1+\alpha)}\\
 	   			& \ +  \lf(\hat C^{2,B}(T) (\Delta/\vep)^\nu+ \f{\mdpsc \|\s\|_\infty}{2\g}\ri)\int_0^t \EE\|\bar Z_\vep(s)\|^{2\alpha}ds\\
 	   			&\ + \f{\alpha\|\s\|_\infty^2}{2\g\bar B} \int_0^t \EE\|\bar Z_\vep(s)\|^{\alpha} ds. 
 	   			%
 	   		\end{align*}		
 	Notice that for any $\theta >0$, $ \EE\|\bar Z_\vep(s)\|^{\alpha} \leq \theta  \EE\|\bar Z_\vep(s)\|^{2\alpha}+\theta ^{-1}.$   		
Now  choose $\vep_0>0$ such that $\hat C^{2,B}(T) (\Delta/\vep)^\nu+ \f{\mdpsc \|\s\|_\infty}{2\g}\leq 1/4$ for $\vep\leq \vep_0$, and  choose $\theta >0$ such that $\theta  \alpha\|\s\|_\infty^2/2\g\bar B \leq 1/4$. Then for all $\vep \leq \vep_0$,
 \begin{align*}
 	\f{1}{2}\int_0^t\EE\lf( \|\bar Z_\vep(s)\|^{2\alpha} \ri)  ds \leq & \ \bar B^{2\alpha}T+\f{\vep\|x_0\|^{1+\alpha}}{\g(1+\alpha)}+\f{\hat C^{2,B}(T)}{\g(1+\alpha)} +\f{\mdpsc M(1+\alpha)}{2\g(1+\alpha)} + \f{\alpha\|\s\|_\infty^2 T}{2\theta \g\bar B}
 \end{align*}	
 which proves the assertion.	  	 	
\end{proof}

We now state the similar result for the original process $Z^\vep$.
\begin{lemma}\label{Z-proc-mmt-bd}
	Suppose that  $ Z^\vep$ satisfies \eqref{SDE-Eu-1} and that $\Delta(\vep)/\vep \rt 0$. Assume that Condition \ref{cond_SDE_inv} and Condition \ref{cond_SDE_coef-ex} hold. Then for all $q>0$, there exists a constant $\vep_0$ such that
	$$\sup_{0<\vep \leq \vep_0} \EE\lf[\int_0^T \| Z^\vep(t)\|^{q} dt \ri] <\infty. $$
\end{lemma}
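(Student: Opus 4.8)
The argument follows the pattern of the proof of Lemma~\ref{ctrlproc-bd-2}, but the absence of a control term makes it simpler and lets us reach every moment at once. Since $\|x\|^q \le 1+\|x\|^{q'}$ whenever $q'\ge q$, it suffices to prove the bound for all sufficiently large $q$; accordingly I fix $q$ large enough that $p\doteq q+1-\alpha\ge 2$, that $q-\alpha\ge 1$, and that $(q-\alpha+1)\bar\alpha\le q$ (the last inequality is automatic for large $q$ because $\bar\alpha\le\alpha\wedge 1$; see Condition~\ref{cond_SDE_coef-ex}). For each fixed $\vep$, an induction over the grid points of the Euler scheme \eqref{SDE-Eu-1}, using only the polynomial growth of $b$ and the boundedness of $\s$, yields $\sup_{t\le T}\EE\|Z^\vep(t)\|^{q'}<\infty$ for every $q'$; this a priori finiteness turns the stochastic-integral terms below into genuine martingales and legitimizes the rearrangement at the end without any localization bookkeeping.

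Apply It\^o's formula to $g(Z^\vep(t))$ with $g(x)=(1+\|x\|^2)^{p/2}$ and take expectations, so that the martingale term drops. One has $\|\nabla g(x)\|\le C(1+\|x\|)^{p-1}$, $\|D^2 g(x)\|\le C(1+\|x\|)^{p-2}$, $\|a\|_\infty<\infty$, and, by Condition~\ref{cond_SDE_inv}(i), $\langle\nabla g(x),b(x)\rangle=p(1+\|x\|^2)^{p/2-1}\langle x,b(x)\rangle\le -c\|x\|^{p-1+\alpha}+C=-c\|x\|^q+C$ for all $x$ (the negative power dominates only for $\|x\|>B$; on the ball one uses continuity of $\nabla g$ and $b$). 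Writing $b(Z^\vep(\vr_\vep(s)))=b(Z^\vep(s))+\bigl(b(Z^\vep(\vr_\vep(s)))-b(Z^\vep(s))\bigr)$ and bounding the It\^o correction by $\frac{\|a\|_\infty}{2\vep}\int_0^t\EE\|D^2 g(Z^\vep(s))\|\,ds$, we obtain
\begin{align*}
\EE g(Z^\vep(t))\le\ & g(x_0)-\frac{c}{\vep}\int_0^t\EE\|Z^\vep(s)\|^q\,ds+\frac{C(T)}{\vep}+\frac{C}{\vep}\int_0^t\EE(1+\|Z^\vep(s)\|)^{p-2}\,ds\\
&+\frac{1}{\vep}\int_0^t\EE\bigl|\langle\nabla g(Z^\vep(s)),\,b(Z^\vep(\vr_\vep(s)))-b(Z^\vep(s))\rangle\bigr|\,ds.
\end{align*}
Since $p-2=q-1-\alpha<q$, Young's inequality bounds the third term on the right by $\frac{\epsilon}{\vep}\int_0^t\EE\|Z^\vep(s)\|^q\,ds+\frac{C_\epsilon T}{\vep}$ for any $\epsilon>0$.

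For the discretization term, use the H\"older continuity of $b$ (Condition~\ref{cond_SDE_coef-ex}(i)) and $\|\nabla g(Z^\vep(s))\|\le C(1+\|Z^\vep(s)\|)^{q-\alpha}$, and split the $\mathcal{F}_s$-measurable prefactor (legitimate since $q-\alpha\ge 1$) as $(1+\|Z^\vep(s)\|)^{q-\alpha}\le C\bigl[(1+\|Z^\vep(\vr_\vep(s))\|)^{q-\alpha}+\|Z^\vep(s)-Z^\vep(\vr_\vep(s))\|^{q-\alpha}\bigr]$. Conditioning on $\mathcal{F}_{\vr_\vep(s)}$ and applying Lemma~\ref{lem-diff-est-Z}(i) (with exponents $\nu$ and $q-\alpha+\nu$, respectively) together with the moment comparison \eqref{est-mmt-comp}, and noting that the choice of $q$ keeps every $\bar\alpha$-weighted exponent that arises at most $q$, one bounds this integrand by $C(\Delta(\vep)/\vep)^{\nu/2}\bigl(1+\EE\|Z^\vep(s)\|^q\bigr)$. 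Collecting everything,
\begin{align*}
\EE g(Z^\vep(t))+\frac{1}{\vep}\Bigl(c-\epsilon-C(\Delta(\vep)/\vep)^{\nu/2}\Bigr)\int_0^t\EE\|Z^\vep(s)\|^q\,ds\le g(x_0)+\frac{C'(T)}{\vep}.
\end{align*}
Fixing $\epsilon<c/2$ and then $\vep_0$ so small that $C(\Delta(\vep)/\vep)^{\nu/2}<c/4$ for $\vep\le\vep_0$ (possible because $\Delta(\vep)/\vep\to0$), the bracket exceeds $c/4>0$; dropping the nonnegative term $\EE g(Z^\vep(t))$ and using finiteness of the integral for each fixed $\vep$ yields $\int_0^T\EE\|Z^\vep(s)\|^q\,ds\le\frac{4}{c}\bigl(\vep_0 g(x_0)+C'(T)\bigr)$ for all $\vep\le\vep_0$, which is the assertion.

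The one genuinely delicate step is this last absorption: the drift's discretization error, being divided by $\vep$, a priori produces a term of the same $1/\vep$-order as the stabilizing term $-\frac{c}{\vep}\int_0^t\EE\|Z^\vep(s)\|^q\,ds$, so it can be absorbed only because its coefficient carries the vanishing factor $(\Delta(\vep)/\vep)^{\nu/2}$. This is exactly where, and why, the hypothesis $\Delta(\vep)/\vep\to0$ is indispensable; the remaining estimates (the polynomial growth bookkeeping and the choice of $g$) are routine and parallel the proof of Lemma~\ref{ctrlproc-bd-2}.
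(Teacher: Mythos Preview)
Your proof is correct and follows essentially the same route as the paper's own argument: apply It\^o's formula to a $p$-th power, use Condition~\ref{cond_SDE_inv}(i) to extract the negative term $-\frac{c}{\vep}\int_0^t\EE\|Z^\vep(s)\|^{p+\alpha-1}ds$, control the discretization error $\langle\nabla g(Z^\vep(s)),b(Z^\vep(\vr_\vep(s)))-b(Z^\vep(s))\rangle$ via Lemma~\ref{lem-diff-est-Z} and the splitting through $\|Z^\vep(\vr_\vep(s))\|$, and absorb it because of the factor $(\Delta/\vep)^{\nu/2}$. The only cosmetic differences are your use of the smoothed Lyapunov function $g(x)=(1+\|x\|^2)^{p/2}$ in place of $\|x\|^p$ and your explicit remark on a~priori finiteness; neither changes the substance.
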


\begin{proof}
	Let $p\geq2$. Then
	\begin{align}\non
		\| Z^\vep(t)\|^p = & \|x_0\|^p + \mart^\vep(t)+\f{p}{2} \int_0^t \| Z^\vep(s)\|^{p-2} d\| Z^\vep(s)\|^2 +\f{p(p-2)}{8}\int_0^t  \|Z^\vep(s)\|^{p-4}d[\| Z^\vep\|^2]_s\\ \non
		=& \|x_0\|^p+ \mar_\vep(t) + \f{p}{\vep} \int_0^t   \| Z^\vep(s)\|^{p-2} \< Z^\vep(s), b(\bar Z^\vep(\vr_\vep(s)))\>ds \\ \non
		&+ \f{p}{2\vep} \int_0^t  \|Z^\vep(s)\|^{p-2} \|\s(Z^\vep(\vr_\vep(s)))\|^2\ ds\\ \non
		&+\f{p(p-2)}{2\vep}\int_0^t \| Z^\vep(s)\|^{p-4}\|Z^\vep(s)\s( Z^\vep(\vr_\vep(s)))\|^2 ds,	\\ \non
	\end{align}	
where
$$\mart^\vep(t) = \f{p}{\sqrt \vep} \int_0^t \|\bar Z_\vep(s)\|^{p-2} \bar Z_\vep(s)^T\s(\bar Z_\vep(\vr_\vep(s)))dW(s)$$
is a martingale.
	Splitting the third term according as $\|Z^\vep(s)\| >B$ or not, we have for some constant $\tilde \const^{1,B}(T)$
	\begin{align}\non
		\| Z^\vep(t)\|^p \leq &\ \|x_0\|^{p} + \mart^\vep(t)+\f{p}{\vep}A^\vep(t)+\tilde \const^{1,B}(T)/\vep-\f{p}{\vep} \int_0^t   \| Z^\vep(s)\|^{p+\alpha-1} 1_{\{\|Z^\vep(s)\| >B\}}ds \\ \label{mmt-est-0}
		& \ + \f{\|\s\|_\infty^2(2p+p(p-2))}{4\vep} \int_0^t\| Z^\vep(s)\|^{p-2}ds,
	\end{align}	
	where 
	\begin{align*}
		A^\vep(t) = &\  \int_0^t \|\bar Z_\vep(s)\|^{p-2}  \<\bar Z_\vep(s), b(\bar Z_\vep(\vr_\vep(s)) - b(\bar Z_\vep(s))\> ds.
	\end{align*}		
	This term has to be handled a bit differently than the estimate of the corresponding term (c.f $ A_1(t)$) in the previous proof.
	Notice that by Lemma \ref{lem-diff-est}
	\begin{align*}
		\EE|A^\vep(t)| \leq &\ 2^{p-2}L_b\lf(\EE\int_0^t \|Z^\vep(\vr_\vep(s))\|^{p-1}  \| Z^\vep(\vr_\vep(s)) - Z^\vep(s)\|^\nu ds + \EE\int_0^t  \| Z^\vep(\vr_\vep(s)) - Z^\vep(s)\|^{p+\nu-1} ds  \ri)\\
		& \leq \tilde\const^{2}(T) \Bigg(\int_0^t \EE\lf(\|Z^\vep(\vr_\vep(s))\|^{p-1} \EE\lf( \| Z^\vep(\vr_\vep(s)) - Z^\vep(s)\|^\nu \Big |\SC{F}_{\vr_\vep(s)}\ri)\ri) ds\\
		& \hs*{1cm} + \lf(\f{\Delta}{\vep}\ri)^{(p+\nu-1)/2} \lf(1+\int_0^t \EE  \|Z^\vep(s)\|^{(p+\nu-1)\bar\alpha} ds\ri)\Bigg)\\
		& \leq \tilde\const^{3}(T) \Bigg(\lf(\f{\Delta}{\vep}\ri)^{\nu/2}\int_0^t\lf(1+\EE\|Z^\vep(\vr_\vep(s))\|^{p-1+\nu\bar\alpha}\ri)ds\\
		& \hs*{1cm}   + \lf(\f{\Delta}{\vep}\ri)^{(p+\nu-1)/2} \lf(1+\int_0^t \EE  \|Z^\vep(s)\|^{(p+\nu-1)\bar\alpha} ds\ri)\Bigg)\\	  
		& \leq \tilde\const^{4}(T) \lf(\f{\Delta}{\vep}\ri)^{\nu/2}\lf(1+ \int_0^t \EE\|Z^\vep(s)\|^{p+\alpha-1} ds\ri),
	\end{align*}
	where the $\tilde\const^i$ are appropriate constants.
	Here the last inequality uses \eqref{est-mmt-comp} and the fact that $\max\{p-1+\nu\bar \alpha, (p-1+\nu)\bar\alpha\} \leq p+\alpha-1.$
	Now following similar steps as in the proof of the previous theorem, it follows after choosing  $\Delta(\vep)/\vep$ sufficiently small and rearranging terms in \eqref{mmt-est-0}, that there exists an $\vep_0>0$ such that
	$$\sup_{0<\vep \leq \vep_0}\EE\lf[\int_0^T \| Z^\vep(t)\|^{p+\alpha-1} dt \ri] <\infty. $$

\end{proof}	

\begin{corollary}\label{mmt-bd-ctrl}
Under the assumptions of Lemma \ref{ctrlproc-bd-2},
\begin{align*}
\sup_{0<\vep<\vep_0} \sup_{\psi \in \SC{P}_2^M} \vep \EE\lf[\sup_{r \leq t} \|\bar Z_\vep ^\psi (r)\|^{1+\alpha}\ri] < \infty.
\end{align*}
\end{corollary}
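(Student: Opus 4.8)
The plan is to revisit the It\^o expansion \eqref{Ito-l-MC}--\eqref{Ito-l-MC-2} of $\vart(\|\bar Z^\psi_\vep(t)\|^2)$ obtained in the proof of Lemma \ref{ctrlproc-bd-2}, take the supremum over $r\le t$ \emph{before} taking expectations, and multiply through by $\vep$ at the end. Writing $\bar Z_\vep$ for $\bar Z^\psi_\vep$, that expansion bounds $\sup_{r\le t}\vart(\|\bar Z_\vep(r)\|^2)$ pathwise by $\vart(\|x_0\|^2)$, plus $\sup_{r\le t}|\mar_\vep(r)|$, plus $\vep^{-1}$ times the integral of the (nonnegative majorants of the) drift and covariation integrands on the right of \eqref{Ito-l-MC-2}. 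Since $\vart(\|x\|^2)=\|x\|^{1+\alpha}$ for $\|x\|\ge 1$ and $\vart\ge 0$ everywhere, one has $\|x\|^{1+\alpha}\le\vart(\|x\|^2)+1$, so it suffices to bound $\vep\,\EE\big[\sup_{r\le t}\vart(\|\bar Z_\vep(r)\|^2)\big]$ uniformly over $\vep\le\vep_0$ and $\psi\in\SC{P}_2^M$; the deterministic term $\vep\,\vart(\|x_0\|^2)$ is trivially bounded.

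For the $\vep^{-1}$ integral term I would reuse almost verbatim the estimates that produce \eqref{ctrl-proc-mmt-bd-int} in the proof of Lemma \ref{ctrlproc-bd-2}, with one simplification: rather than isolating the coercive term $-\g(1+\alpha)\vep^{-1}\int_0^t\EE\big(\|\bar Z_\vep(s)\|^{2\alpha}1_{\{\|\bar Z_\vep(s)\|>\bar B\}}\big)ds$, I simply drop it (it is $\le 0$) and retain everything else. The surviving contributions are the bounded piece $\{\|\bar Z_\vep(s)\|\le \bar B\}$ of the $b$-drift, which is $O(T)/\vep$; the discretization-error terms $A_1,A_2$, controlled by Lemma \ref{lem-diff-est}-(iv) together with Lemma \ref{ctrlproc-bd-2} and $\nu\bar\alpha\le\alpha$; the $\|\s\|_\infty^2$ integrals, controlled by $\vart'(\|x\|^2)+|\vart''(\|x\|^2)|\,\|x\|^2\le\const\,(1+\|x\|^{2\alpha})$ and Lemma \ref{ctrlproc-bd-2}; and the control term, which by $\vart'(\|x\|^2)\|x\|\le\const\,(1+\|x\|^\alpha)$ and $ab\le\f12(a^2+b^2)$ is at most $\const\,\mdpsc\,\vep^{-1}\big(M+T+\int_0^t\|\bar Z_\vep(s)\|^{2\alpha}ds\big)$. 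Multiplying each of these by $\vep$ and taking expectations yields quantities bounded uniformly in $\vep\le\vep_0$ and $\psi\in\SC{P}_2^M$: one uses $(\Delta(\vep)/\vep)^\nu\le 1$ and $\mdpsc\le 1$ for small $\vep$, the constraint $\int_0^T\|\psi(s)\|^2ds\le M$ a.s., and, decisively, the already-established uniform bound $\sup_{\vep\le\vep_0}\sup_{\psi\in\SC{P}_2^M}\EE\int_0^T\|\bar Z_\vep(s)\|^{2\alpha}ds<\infty$ from Lemma \ref{ctrlproc-bd-2}.

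It then remains to control the martingale supremum, and this is the only genuinely new ingredient. By the Burkholder--Davis--Gundy inequality, $\|\s\|_\infty<\infty$, and the bound $|\vart'(\|x\|^2)|^2\|x\|^2\le\const\,(1+\|x\|^{2\alpha})$,
\[
\EE\Big[\sup_{r\le t}|\mar_\vep(r)|\Big]\ \le\ \const\,\EE\big[[\mar_\vep]_t^{1/2}\big],
\qquad
[\mar_\vep]_t\ \le\ \f{\const}{\vep}\int_0^t\big(1+\|\bar Z_\vep(s)\|^{2\alpha}\big)\,ds,
\]
so Cauchy--Schwarz and Lemma \ref{ctrlproc-bd-2} give $\vep\,\EE\big[\sup_{r\le t}|\mar_\vep(r)|\big]\le\const\,\sqrt{\vep}\,\big(\EE\int_0^t(1+\|\bar Z_\vep(s)\|^{2\alpha})ds\big)^{1/2}=O(\sqrt\vep)$, uniformly in $\psi$. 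To make the martingale property and the use of BDG rigorous one should run the entire argument with the localizing times $\tau_N=\inf\{s\ge 0:\|\bar Z_\vep(s)\|\ge N\}$ substituted for $t$, obtain the estimates with constants free of $N$, and let $N\to\infty$ via Fatou's lemma. Combining the three pieces gives $\sup_{\vep\le\vep_0}\sup_{\psi\in\SC{P}_2^M}\vep\,\EE\big[\sup_{r\le t}\vart(\|\bar Z_\vep(r)\|^2)\big]<\infty$, hence the corollary. The main obstacle here is not conceptual but one of bookkeeping: one must verify that every constant generated in the chain of estimates depends only on the fixed data $\g,\alpha,\cnst,L_b,\|\s\|_\infty,\nu,\bar\alpha,M,T,x_0$ and on nothing that varies with $\vep$ or $\psi$ --- precisely the point where Lemma \ref{ctrlproc-bd-2} and Lemma \ref{lem-diff-est} do the heavy lifting, and where the extra factor $\sqrt\vep$ renders the BDG contribution harmless.
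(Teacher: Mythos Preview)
Your proof is correct and follows essentially the same route as the paper: revisit the It\^o expansion \eqref{Ito-l-MC-2}, take the pathwise supremum, bound the drift/discretization/control integrals via Lemma~\ref{ctrlproc-bd-2} and Lemma~\ref{lem-diff-est}, and handle $\sup_r|\mar_\vep(r)|$ by BDG plus the $2\alpha$-integral bound to pick up the extra $\sqrt\vep$. The only cosmetic difference is that on $\{\|\bar Z_\vep(s)\|>\bar B\}$ you drop the $b$-drift contribution by sign, whereas the paper instead bounds $|\langle x,b(x)\rangle|\le \cnst\|x\|(1+\|x\|^{\bar\alpha})$ and then invokes Lemma~\ref{ctrlproc-bd-2}; both are fine.
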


\begin{proof}

It follows from \eqref{Ito-l-MC-2}  that (again after denoting $\bar Z_\vep ^\psi$ by $\bar Z_\vep$) for some constant $\hat C^{3,B}(T)$

\begin{align*}
\vep\sup_{r \leq t} \|\bar Z_\vep ^\psi (r)\|^{1+\alpha} 1_{\{\|\bar Z_\vep(r)\| > \bar B\}} \leq & \ \vep\|\vart(\|x_0\|^2)\|+  \vep\sup_{r\leq t}\|\mar_\vep(r)\| + \hat C^{3,B}(T)+ 2(A_1(t) + A_2(t))  \\ 
 	  	 		&\ + (1+\alpha)\cnst \int_0^t  \|\bar Z_\vep(s)\|^{\alpha} (1+ \|\bar Z_\vep(s)\|^{\bar \alpha})1_{\{\|\bar Z_\vep(r)\| > \bar B\}} ds\\
				& \ + \f{\alpha(1+\alpha)\|\s\|_\infty^2}{2}  \int_0^t \|\bar Z_\vep(s)\|^{\alpha-1}1_{\{\|\bar Z_\vep(s)\| > \bar B\}} ds \\
&\ +  \mdpsc (1+\alpha) \|\s\|_\infty \int_0^t \|\bar Z_\vep(s)\|^{\alpha} \psi(s)1_{\{\|\bar Z_\vep(s)\| > \bar B\}} ds.
\end{align*}

 Lemma \ref{ctrlproc-bd-2} implies  that the  expectation of $A_1(t), A_2(t)$ and the last three terms of the above display are bounded above by a constant $\hat C^{4}_{T}$.
Finally, to bound $ \vep\EE\lf[\sup_{r\leq t}|\bar M_\vep(t)|\ri]$ notice that by Burkholder-Davis-Gundy inequality
\begin{align*}
 \vep \EE\lf[\sup_{r\leq t} |\bar M_\vep(t)|\ri] \leq & 2 \sqrt \vep \EE \lf(\int_0^t  \vart'(\|\bar Z_\vep(s)\|^2)^2 \|\bar Z_\vep(s)^T\s(\bar Z_\vep(\vr_\vep(s)))\|^2 ds \ri)^{1/2}\\
 \leq &\  \sqrt \vep \|\s\|_\infty^2 \EE \Bigg(\int_0^t  \vart'(\|\bar Z_\vep(s)\|^2)^2 \|\bar Z_\vep(s)^T\|^2  1_{\{ \|\bar Z_\vep(s)\| \leq  \bar B\}}ds \\
 &  + \f{1+\alpha}{2}\int_0^t \|\bar Z_\vep(s)\|^{2(\alpha-1)} \|\bar Z_\vep(s)^T\|^2 1_{\{\|\bar Z_\vep(s)\| > \bar B\}} ds \Bigg)^{1/2}\\
  \leq &  \sqrt \vep \EE \Bigg(\|\vart'\|^2_{\bar B^2} \bar B^2 T  +  \int_0^t \|\bar Z_\vep(s)\|^{2\alpha} 1_{\{\|\bar X_\vep(s)\| > \bar B\}} ds \Bigg)^{1/2}\\
  \leq & \sqrt \vep \hat C^{5,B}(T),
\end{align*}
for some constant $\hat C^{5}(T)>0$. Here the last inequality made use of Lemma \ref{ctrlproc-bd-2}.

\end{proof}

We now state the similar result for the original process, which is a corollary to Lemma \ref{Z-proc-mmt-bd}, and whose proof uses similar (in fact much simpler) steps used in the proof of Corollary \ref{mmt-bd-ctrl}.
\begin{corollary}\label{mmt-bd-Z}
	Under the assumptions of Lemma \ref{Z-proc-mmt-bd}, for any $q>0$, there exists a constant $\vep_0$ such that
	\begin{align*}
		\sup_{0<\vep<\vep_0}  \vep \EE\lf[\sup_{r \leq t} \| Z^\vep  (r)\|^{q}\ri] < \infty.
	\end{align*}
\end{corollary}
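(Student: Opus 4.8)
The plan is to obtain the bound for $q \ge 2$ first (the case $q<2$ follows trivially since $\int_0^T \|Z^\vep(t)\|^q\,dt \le T + \int_0^T \|Z^\vep(t)\|^{q'}\,dt$ for $q' \ge 2\vee q$, and Corollary~\ref{mmt-bd-Z} similarly reduces to $q \ge 2$ after using $\|x\|^q \le 1 + \|x\|^{q'}$). So fix $q \ge 2$. The main device is the same as in Corollary~\ref{mmt-bd-ctrl}: rather than re-running It\^o's lemma from scratch, I would recycle the decomposition \eqref{mmt-est-0} that was established inside the proof of Lemma~\ref{Z-proc-mmt-bd} with $p = q$ (the exponent on the left is $\|Z^\vep(t)\|^p$, and the second-to-last line there already isolates a drift term with the favorable sign $-\frac{p}{\vep}\int_0^t \|Z^\vep(s)\|^{p+\alpha-1}1_{\{\|Z^\vep(s)\|>B\}}\,ds$). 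Since $\alpha \ge 0$, this negative term dominates the positive $\|Z^\vep(s)\|^{p-2}$ and $\|Z^\vep(s)\|^\alpha(1+\|Z^\vep(s)\|^{\bar\alpha})$ contributions for large $\|Z^\vep(s)\|$; the small-$\|Z^\vep(s)\|$ contributions are absorbed into additive constants via the indicator split as in the proofs above.

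The key steps, in order, would be: (1) take expectations in \eqref{mmt-est-0}, killing the martingale $\mart^\vep(t)$; (2) estimate $\EE|A^\vep(t)|$ exactly as displayed in the proof of Lemma~\ref{Z-proc-mmt-bd}, using Lemma~\ref{lem-diff-est-Z}, \eqref{est-mmt-comp}, and the arithmetic inequality $\max\{p-1+\nu\bar\alpha,\ (p-1+\nu)\bar\alpha\} \le p+\alpha-1$, to get $\EE|A^\vep(t)| \le \tilde\const(T)(\Delta/\vep)^{\nu/2}\bigl(1 + \int_0^t \EE\|Z^\vep(s)\|^{p+\alpha-1}\,ds\bigr)$; (3) control $\EE\|Z^\vep(s)\|^{p-2}$ against $\EE\|Z^\vep(s)\|^{p+\alpha-1}$ using $\|x\|^{p-2} \le \theta \|x\|^{p+\alpha-1} + C_\theta$ (valid since $p-2 < p+\alpha-1$ when $\alpha > -1$, in particular for $\alpha \ge 0$) for a small parameter $\theta$; (4) collect terms: the coefficient multiplying $\frac1\vep\int_0^t \EE\|Z^\vep(s)\|^{p+\alpha-1}\,ds$ on the right is $\frac p\vep(\tilde\const(T)(\Delta/\vep)^{\nu/2} + \theta) - \frac p\vep$, which for $\theta$ small and $\Delta(\vep)/\vep$ small enough (so that $\tilde\const(T)(\Delta/\vep)^{\nu/2} + \theta \le 1/2$) is bounded above by $-\frac{p}{2\vep}$; (5) move the resulting term to the left and multiply through by $\vep$ to conclude, for $\vep \le \vep_0$,
\begin{align*}
\frac p2 \int_0^T \EE\|Z^\vep(t)\|^{p+\alpha-1}\,dt \le \|x_0\|^p + \tilde\const(T),
\end{align*}
which is the claim with $q$ replaced by $p+\alpha-1$; since $p$ was an arbitrary real $\ge 2$ and $\alpha \ge 0$, this covers every $q \ge 1 + \alpha$, hence every $q > 0$.

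For the supremum bound, Corollary~\ref{mmt-bd-Z}, I would mirror Corollary~\ref{mmt-bd-ctrl}: start again from the (pre-expectation, pre-sign-discarding) It\^o expansion of $\|Z^\vep(t)\|^p$ used in Lemma~\ref{Z-proc-mmt-bd}, take $\sup_{r\le t}$ and then expectation, bound $\vep\EE[\sup_{r\le t}|\mart^\vep(r)|]$ by Burkholder--Davis--Gundy — the quadratic variation is $\frac{p^2}{\vep}\int_0^t \|Z^\vep(s)\|^{2(p-1)}\|\s(Z^\vep(\vr_\vep(s)))\|^2\,ds$, so after multiplying by $\vep$ one gets $\le 2p\|\s\|_\infty\sqrt\vep\,\EE(\int_0^t \|Z^\vep(s)\|^{2(p-1)}\,ds)^{1/2}$, and the integral inside is finite uniformly in $\vep$ by the first part applied with exponent $2(p-1)$ — and bound the drift integrals directly using the first part (all their integrands are of the form $\|Z^\vep(s)\|^{p+\alpha-1}$ or lower powers, already known to be integrable on $[0,T]$ uniformly in $\vep$ after multiplication by $\vep$). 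Collecting, $\vep\EE[\sup_{r\le t}\|Z^\vep(r)\|^p] < \infty$ uniformly for $\vep \le \vep_0$; reducing a general $q$ to $p=q\vee 2$ via $\|x\|^q \le 1 + \|x\|^p$ finishes it.

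The main obstacle is step (4): one must verify carefully that after collecting the positive error terms coming from $A^\vep$, from $\|Z^\vep\|^{p-2}$, and from the small-$\|Z^\vep\|$ pieces of the negative term, the net coefficient of $\frac1\vep\int_0^t\EE\|Z^\vep(s)\|^{p+\alpha-1}\,ds$ is genuinely negative — this is where the hypothesis $\Delta(\vep)/\vep\to 0$ and the freedom in the auxiliary constant $\theta$ are both essential, and where the inequality $\max\{p-1+\nu\bar\alpha,(p-1+\nu)\bar\alpha\}\le p+\alpha-1$ (using $\bar\alpha \le \alpha\wedge 1$) is used to make sure no error term exceeds the sacrificial exponent $p+\alpha-1$. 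Everything else is bookkeeping already rehearsed in Lemmas~\ref{ctrlproc-bd-2} and~\ref{Z-proc-mmt-bd}.
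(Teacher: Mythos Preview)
Your proposal is correct and follows essentially the same approach the paper intends: the paper gives no explicit proof, saying only that the argument ``uses similar (in fact much simpler) steps used in the proof of Corollary~\ref{mmt-bd-ctrl}.'' Note, however, that steps (1)--(5) of your proposal re-derive Lemma~\ref{Z-proc-mmt-bd}, which is already assumed as a hypothesis of the corollary and need not be reproved; only the final paragraph on the supremum bound (It\^o expansion, take $\sup$, BDG for the martingale, drift integrals bounded via Lemma~\ref{Z-proc-mmt-bd}) is actually required.
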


\begin{remark} \label{int-disc-est-0}
{\rm
If $\alpha \leq 1$, then it follows from Lemma \ref{lem-diff-est}-(iv) that 
\begin{align*}
\sup_{\vep \in (0,\vep_0]} \sup_{\psi \in \SC{P}_2^M} \EE\lf[\int_0^T \|\bar Z^\psi_\vep(\vr_\vep(t))\|^{2\alpha} dt \ri] <\infty.
\end{align*}
Recall that $\Delta(\vep)/\vep \rt 0$.
But if $\alpha > 1$, then it is not enough to have $\Delta(\vep)/\vep \rt 0$. However, if $\Delta(\vep)$ is such that $\beta(\vep)= \mdpsc\Delta^{1/2}(\vep)/\vep \rt 0$, then the above display still holds.
}
\end{remark}	

Lastly, as mentioned before, for technical reasons, we also need to need to consider a partition $\{\tilde t_k\}$ which is coarser than $\{t_k\}$ and have bounds for integral moments of  $Z^\vep\circ\eta_\vep$ (for the original process) and $\bar Z^\psi_\vep\circ\eta_\vep$ (for the controlled process), where  $\eta_\vep(s)$  is the step function corresponding to a partition $\{\tilde t_k\}$. Specifically, let $\{\tilde t_k\}$ be a partition of $[0,T]$ such that $\tilde \Delta \equiv \tilde \Delta(\vep)= \tilde t_k -\tilde t_{k-1} \leq \vep$, and define $ \eta_\vep(t) = \tilde t_{k}$ for $\tilde t_k\leq t < \tilde t_{k+1}$.

\begin{remark}  \label{int-disc-est-Z}{\rm
From Lemma \ref{lem-diff-est-Z-2} and Lemma \ref{Z-proc-mmt-bd}, it is immediately clear that for any $q>0$, there exist a constant $\bar K^1(T)$ and $\vep_0$ such that
\begin{align*}
\sup_{0<\vep \leq \vep_0} \sup_{\psi \in \SC{P}_2^M} \EE\lf[\int_0^T \| Z^\vep(\eta_\vep(t))\|^{q} dt \ri] \leq \bar K^1(T).
\end{align*}	
}
\end{remark}		

\begin{remark} \label{int-disc-est-1}
{\rm 
If $0 \leq \alpha \leq 1$, then it is not difficult to see that  similar techniques give the following result for the controlled process $\bar Z^\psi_\vep$. Specifically, when the partition $\{\tilde t_k\}$ is chosen with $\tilde \Delta \leq \vep$, then
	$$\sup_{0<\vep \leq \vep_0} \sup_{\psi \in \SC{P}_2^M} \EE\lf[\int_0^T \|\bar Z^\psi_\vep(\eta_\vep(t))\|^{2\alpha} dt \ri] < \bar K^2(T),$$
	for some constant $\vep_0$ and $\bar K^1(T)$.
%
However for  $\alpha >1$, it is not difficult to see that this technique does not work. 
 The problem lies with the fact that we only have $L^2$-boundedness of the controls $\psi$, and the fact that requiring $\tilde \Delta(\vep) \leq \vep$ is not enough to guarantee boundedness of $\mdpsc\tilde \Delta^{1/2}(\vep)/\vep$. However, if $1<\alpha\leq 2$, we still have a similar result on the $\alpha$-th order integral moment of $\bar Z^\psi_\vep \circ \eta_\vep$. That is, for $1<\alpha\leq 2$
 	$$\sup_{0<\vep \leq \vep_0} \sup_{\psi \in \SC{P}_2^M} \EE\lf[\int_0^T \|\bar Z^\psi_\vep(\eta_\vep(t))\|^{\alpha} dt \ri] < \bar K^2(T).$$
For other $\alpha$, this technique doesn't work and the following lemma, which makes fresh use of  It\^o's lemma, gives the same result. 

}
\end{remark}

\begin{corollary}\label{dis-mmt-bd-ctrl}
	Let $\{\tilde t_k\}$ be a partition of $[0,T]$ such that $\tilde \Delta = \tilde t_k -\tilde t_{k-1} \leq \vep $. Then, under the assumptions in Lemma \ref{ctrlproc-bd-2}, for all $M>0$, there exists an $\vep_0>0$ such that
	$$\sup_{\vep \in (0,1)} \sup_{\psi \in \SC{P}_2^M} \EE\lf[\int_0^T \|\bar Z^\psi_\vep(\eta_\vep(t))\|^{\alpha} dt \ri] <\infty, $$
where $ \eta_\vep$ was defined before Remark \ref{int-disc-est-1}.	
\end{corollary}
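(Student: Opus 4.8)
The plan is to mimic the It\^o-based argument of Lemma \ref{ctrlproc-bd-2}, but now tracking increments against the coarse step function $\eta_\vep$ instead of $\vr_\vep$, and to settle for the $\alpha$-th order integral moment rather than the $2\alpha$-th. First I would apply It\^o's lemma to $t\mapsto\vart(\|\bar Z^\psi_\vep(t)\|^2)$ exactly as in \eqref{Ito-l-MC}--\eqref{Ito-l-MC-2} (dropping the superscript $\psi$), which already gives an expression for $\EE\vart(\|\bar Z_\vep(t)\|^2)$ in which the dominant negative contribution is $-\frac{\g(1+\alpha)}{\vep}\int_0^t\EE\big(\|\bar Z_\vep(s)\|^{2\alpha}1_{\{\|\bar Z_\vep(s)\|>\bar B\}}\big)\,ds$, coming from the recurrence Condition \ref{cond_SDE_inv}-(i). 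The point is that this quantity controls not only $\int_0^t\EE\|\bar Z_\vep(s)\|^{2\alpha}ds$ (which Lemma \ref{ctrlproc-bd-2} already bounds) but, a fortiori via $\|x\|^\alpha\le 1+\|x\|^{2\alpha}$, also $\int_0^t\EE\|\bar Z_\vep(s)\|^{\alpha}ds$.

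Next I would relate the $\eta_\vep$-evaluated integral moment to the $s$-evaluated one. Write, for each $s$,
\begin{align*}
\EE\|\bar Z_\vep(\eta_\vep(s))\|^{\alpha}\le 2^{(\alpha-1)^+}\Big(\EE\|\bar Z_\vep(s)\|^{\alpha}+\EE\|\bar Z_\vep(s)-\bar Z_\vep(\eta_\vep(s))\|^{\alpha}\Big),
\end{align*}
so it suffices to bound $\int_0^T\EE\|\bar Z_\vep(s)-\bar Z_\vep(\eta_\vep(s))\|^{\alpha}ds$. For this I would invoke Lemma \ref{lem-diff-est-Z-2}-type reasoning adapted to the controlled process: the decomposition \eqref{eq-diff-0} holds with $\eta_\vep$ in place of $\vr_\vep$, and using H\"older's inequality on the drift term (with exponent governed by $\tilde\Delta\le\vep$, hence $\tilde\Delta/\vep\le 1$), the Brownian term, and the control term $\frac{\mdpsc}{\vep}\s(\cdot)\int_{\eta_\vep(s)}^s\psi(r)dr$ bounded via Cauchy--Schwarz and $\|\psi\|_{P_2^M}$, one gets, after integrating over $[0,T]$ and using \eqref{int-diff-est},
\begin{align*}
\int_0^T\EE\|\bar Z_\vep(s)-\bar Z_\vep(\eta_\vep(s))\|^{\alpha}ds\le C(T)\Big((\tilde\Delta/\vep)^{\alpha/2}+(\mdpsc\tilde\Delta^{1/2}/\vep)^{\alpha}+(\tilde\Delta/\vep)^{\alpha}\int_0^T\EE\|\bar Z_\vep(s)\|^{\alpha\bar\alpha}ds\Big).
\end{align*}
Since $\bar\alpha\le\alpha\wedge 1\le\alpha$, the last integral is dominated by $\int_0^T\EE(1+\|\bar Z_\vep(s)\|^{2\alpha})ds$, which is finite and uniformly bounded by Lemma \ref{ctrlproc-bd-2}; and since $\tilde\Delta\le\vep$, the prefactor $(\tilde\Delta/\vep)^{\alpha}\le 1$ keeps things under control, so the whole quantity is bounded by a constant depending only on $T,M$ and the structural constants.

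The main obstacle is the control term $\mdpsc\tilde\Delta^{1/2}/\vep$: unlike $\tilde\Delta/\vep$ it need not be small (indeed $\mdpsc\gg\sqrt\vep$ in the MDP regime), and Remark \ref{int-disc-est-1} already flags this as the reason the naive approach fails for $\alpha>1$ in the $2\alpha$-moment version. Here the saving grace is that we only ask for the $\alpha$-th moment: in the bound above this term enters as $(\mdpsc\tilde\Delta^{1/2}/\vep)^{\alpha}=\mcp(\vep)^\alpha$ with $\mcp(\vep)=\mdpsc\tilde\Delta^{1/2}/\vep$, and crucially $\tilde\Delta\le\vep$ forces $\mcp(\vep)\le\mdpsc/\sqrt\vep\cdot(\tilde\Delta/\vep)^{1/2}\cdot\sqrt\vep\cdot\vep^{-1/2}=\mdpsc\le 1$ for $\vep$ small — wait, more carefully: $\mcp(\vep)=\mdpsc\tilde\Delta^{1/2}/\vep\le\mdpsc\vep^{1/2}/\vep=\mdpsc/\sqrt\vep$, which is NOT bounded. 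So the honest resolution is to keep $\mcp(\vep)^\alpha$ on the right-hand side but absorb the companion term $(\tilde\Delta/\vep)^\alpha\int_0^T\EE\|\bar Z_\vep\|^{2\alpha}$ into the already-established bound of Lemma \ref{ctrlproc-bd-2}, and to observe that in the It\^o expansion of $\vart(\|\bar Z_\vep\|^2)$ the control term's contribution $\frac{2\mdpsc\|\s\|_\infty}{\vep}\int_0^t\vart'(\|\bar Z_\vep(s)\|^2)\|\bar Z_\vep(s)\|\|\psi(s)\|ds$ is controlled, exactly as in Lemma \ref{ctrlproc-bd-2}, by $\frac14\int_0^t\|\bar Z_\vep(s)\|^{2\alpha}1_{\{\cdot\}}ds+\frac{M(1+\alpha)}{4}$ after multiplying through by $\mdpsc/\vep\le$ (no — by $\vep$, after dividing the whole display by $\vep$), so that the final rearrangement and absorption step goes through for $\vep\le\vep_0$ and gives the uniform bound. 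The delicate bookkeeping is ensuring every "error" term carrying $\mdpsc/\vep$ or $\mdpsc\tilde\Delta^{1/2}/\vep$ is either multiplied by a genuinely small factor or is of the form that Lemma \ref{ctrlproc-bd-2} already absorbs; I expect this absorption step to be where the $\tilde\Delta\le\vep$ hypothesis is used essentially and where most of the care is needed.
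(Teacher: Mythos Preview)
Your triangle-inequality decomposition
\[
\EE\|\bar Z_\vep(\eta_\vep(s))\|^{\alpha}\le C\big(\EE\|\bar Z_\vep(s)\|^{\alpha}+\EE\|\bar Z_\vep(s)-\bar Z_\vep(\eta_\vep(s))\|^{\alpha}\big)
\]
runs into exactly the obstruction you yourself diagnose and do not resolve: the control contribution to the increment bound is of order $(\mdpsc\tilde\Delta^{1/2}/\vep)^{\alpha}$, and with $\tilde\Delta=\vep$ this is $(\mdpsc/\sqrt\vep)^{\alpha}\to\infty$ in the MDP regime. Your ``honest resolution'' paragraph does not fix this: keeping $\mcp(\vep)^\alpha$ on the right-hand side simply leaves an unbounded term, and reverting to the global It\^o expansion of Lemma~\ref{ctrlproc-bd-2} gives no information about the $\eta_\vep$-evaluated integral. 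As Remark~\ref{int-disc-est-1} already warns, this direct-increment route cannot work for general $\alpha$.

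The paper's proof avoids bounding $\|\bar Z_\vep(s)-\bar Z_\vep(\eta_\vep(s))\|$ altogether. Instead it introduces a smooth $\bar\vartheta$ with $\bar\vartheta(x)=x^{\alpha/2}$ for $x>1$, writes
\[
\|\bar Z_\vep(\eta_\vep(s))\|^{\alpha}1_{\{\cdot>1\}}\le\bar\vartheta(\|\bar Z_\vep(\eta_\vep(s))\|^2)-\bar\vartheta(\|\bar Z_\vep(s)\|^2)+\bar\vartheta(\|\bar Z_\vep(s)\|^2),
\]
and applies It\^o's formula to $r\mapsto\bar\vartheta(\|\bar Z_\vep(r)\|^2)$ on the interval $[\eta_\vep(s),s]$. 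Every term in this expansion carries a factor $1/\vep$ times an integral $\int_{\eta_\vep(s)}^{s}(\cdots)\,dr$; after integrating in $s$ over $[0,T]$ and invoking \eqref{int-diff-est}, each such double integral picks up a factor $\tilde\Delta\le\vep$ that \emph{exactly} cancels the $1/\vep$. In particular the control term becomes $\mdpsc\cdot(\tilde\Delta/\vep)\int_0^T\|\bar Z_\vep(r)\|^{\alpha-1}\|\psi(r)\|\,dr\le\mdpsc\int_0^T\|\bar Z_\vep(r)\|^{\alpha-1}\|\psi(r)\|\,dr$, which is uniformly bounded by Cauchy--Schwarz and Lemma~\ref{ctrlproc-bd-2}. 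The crucial difference from your approach is that \eqref{int-diff-est} produces a full factor $\tilde\Delta$, not $\tilde\Delta^{1/2}$, and this is what neutralises the $\mdpsc/\vep$ in front of the control.
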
	

\begin{proof}

%
%

As before,  we denote $\bar Z^\psi_\vep$ by $\bar Z_\vep$ for notational simplicity. Next we write
	\begin{align}\non
	 \int_0^t  \|\bar Z_\vep(\eta_\vep(s))\|^{\alpha} ds 
			& \ = \int_0^t  \lf( \|\bar Z_\vep(\eta_\vep(s))\|^{\alpha} 1_{\{\|\bar Z_\vep(\eta_\vep(s))\| \leq 1\}} + \|\bar Z_\vep(\eta_\vep(s))\|^{\alpha} 1_{\{\|\bar Z_\vep(\eta_\vep(s))\| \geq 1\}}\ri) ds\\ \non
				& \ \leq  \int_0^t  \lf(1+  \bar\vartheta\lf(\|\bar Z_\vep(\eta_\vep(s))\|^2\ri)\ri) ds\\ \non
		& = \  \int_0^t  \lf(1+  \bar\vartheta\lf(\|\bar Z_\vep(\eta_\vep(s))\|^2\ri) -  \bar\vartheta\lf(\|\bar Z_\vep(s)\|^2\ri) +  \bar \vartheta\lf(\|\bar Z_\vep(s)\|^2\ri) \ri) ds,\\ \non 
		& \leq  \  \int_0^t  \Big(1+ \|\bar \vartheta\|_{\infty,1}+ \bar \vartheta\lf(\|\bar Z_\vep(\eta_\vep(s))\|^2\ri) - \bar \vartheta\lf(\|\bar Z_\vep(s)\|^2\ri)\\ \label{split-1}
		&  \hs*{.5in} + \|\bar Z_\vep(s)\|^{\alpha} 1_{\{\|\bar Z_\vep(\eta_\vep(s))\| > 1\}} \Big) ds,
	\end{align}	
	where  $\bar\vartheta$ is a $C^\infty\lf([0,\infty), [0,\infty)\ri)$ function such
	\begin{align*}
		\bar \vart(x)=
		\begin{cases}
			x^{\alpha/2},& \quad {x>1}\\
			0,& \quad{ 0<x<0.9}, 
		\end{cases}      
	\end{align*}
	 and $\|\bar \vartheta\|_{\infty,r}$ denotes the maximum of $\vartheta$ on $[0,r]$.


From It\^o's lemma \eqref{Ito-l-MC} (with $\bar\vart$ in place of $\vart$), after splitting each term according to $\{\|\bar Z_\vep(s)\| \leq \bar B\}$ and $\{\|\bar Z_\vep(s)\| > \bar B\}$ (where $\bar B = B\vee 1$ with $B$ as in Condition \ref{cond_SDE_inv}-(iii)), we deduce that for some constant $\hat  C^{6,B}(T)$, 
\begin{align*}	
\bar \vart(\|\bar Z_\vep(\eta_{\vep}(s))\|^2)- \bar \vart(\|\bar Z_\vep(s)\|^2)	\leq & \ \hat  C^{6,B}(T)/\vep +   \mar_\vep(\eta_\vep(s)) - \mar_\vep(s)\\
&\ + \f{\alpha L_b}{\vep}\int^{s}_{\eta_\vep(s)}\|\bar Z_\vep(s)\|^{\alpha-1}\|\bar Z_\vep(\vr_\vep(s)) -\bar Z_\vep(s)\|^{\nu}1_{\{\|\bar Z_\vep(r)\| > \bar B\}}ds  \\
& \ +\f{\|\s\|_\infty^2}{2\vep} \int^{s}_{\eta_\vep(s)}  \|\bar Z_\vep(r)\|^{\alpha-2}1_{\{\|\bar Z_\vep(r)\| > \bar B\}}\ dr \\
	& +  \f{\alpha\|\s\|_\infty \mdpsc}{\vep}  \int^{s}_{\eta_\vep(s)} \|\bar Z_\vep(r)\|^{\alpha-1}\| \psi(r)\| 1_{\{\|\bar Z_\vep(r)\| > \bar B\}} dr\\
	&+\f{\alpha(\alpha-2)\|\s\|^2_\infty}{4\vep}\int^{s}_{\eta_\vep(s)} \|\bar Z_\vep(r)\|^{\alpha -2}1_{\{\|\bar Z_\vep(r)\| > \bar B\}} dr,\\ 	
\end{align*}
where $\mar_\vep$ is as in the proof of Lemma \ref{ctrlproc-bd-2} with $\vart$ replaced by $\bar \vart$.	
Since $\tilde\Delta \leq \vep$, by \eqref{int-diff-est} we have
\begin{align*}	
\int_0^T \EE\lf(\bar \vart(\|\bar Z_\vep(\eta_{\vep}(s))\|^2)- \bar \vart(\|\bar Z_\vep(s)\|^2)\ri)ds	\leq &\    \alpha L_b \EE\int_{0}^{T}\|\bar Z_\vep(s)\|^{\alpha-1}\|\bar Z_\vep(\vr_\vep(s)) -\bar Z_\vep(s)\|^{\nu}1_{\{\|\bar Z_\vep(r)\| > \bar B\}}ds  \\
& \ +\f{\|\s\|_\infty^2}{2} \EE\int_0^{T}   \|\bar Z_\vep(r)\|^{\alpha-2}1_{\{\|\bar Z_\vep(r)\| > \bar B\}}\ dr \\
	& \ +  \alpha\|\s\|_\infty \mdpsc \EE \int_0^{T} \|\bar Z_\vep(r)\|^{\alpha-1}\| \psi(r)\| 1_{\{\|\bar Z_\vep(r)\| > \bar B\}} dr\\
	&\ +\f{\alpha(\alpha-2)\|\s\|^2_\infty}{4}\EE\int_0^{T} \|\bar Z_\vep(r)\|^{\alpha -2}1_{\{\|\bar Z_\vep(r)\| > \bar B\}} dr\\
	& \ +\hat C^{6,B}(T). 	
\end{align*}
By the assertion of Lemma \ref{ctrlproc-bd-2} and by the steps used in the same lemma it easily follows that each of the expectations in the above display is bounded by a constant $\hat C^7(T)$ depending on parameters, $B, \cnst, L_b,  \|\s\|_\infty, \alpha, \nu, M.$ 
The assertion now follows from \eqref{split-1}.

\end{proof}

\begin{proposition}\label{prop-tight}
	Let $\{\psi_\vep\}$ be such that $\int_0^T \|\psi_\vep(s)\|^2ds \leq M$ for some constant $M>0$. Let $\bar Z_\vep \equiv \bar Z_\vep^{\psi_\vep}$ satisfy \eqref{ctrl-EuSDE1} with $\psi$ replaced by $\psi_\vep$, and define the occupation measure
	 $\bar R_\vep$ on $\B_T$ by
	 \begin{align}\label{def-occmeas}
	 	\bar R_\vep([0,t] \times A \times B) = \int_0^t 1_{\{\bar Z_\vep (s) \in A\}}1_{\{\psi_\vep (s) \in B\}}ds.
	 \end{align} 
	 Assume that
	 \begin{enumerate}[(i)]
	 \item
	  the step size $\Delta(\vep)$ is such that $(\Delta(\vep)/\vep)^{\nu/2}/\sqrt \vep \rt 0$, as $\vep \rt 0$;
	  \item
	   $f :[0,\infty)\times\R^d \rt \R^n$ satisfies Assumption \ref{assum-f}, with $\modu(\Delta) = \sqrt\Delta$;
	   \item Condition \ref{cond_SDE_inv} (with $\alpha>0$), Condition \ref{cond_SDE_coef-ex}, Condition \ref{u-reg-assum}, and Assumption \ref{assum-exp-u} hold.
	  \end{enumerate} 
  Then $(\bar R_\vep, \bar \Up_\vep(f))$ is tight in $\SC{M}_1(\B_T)\times C([0,T]:\R^d)$, and any limit point $(R,\xi)$ satisfies \eqref{Rset-1} - \eqref{Rset-3}, where $\bar\Up_\vep(f)$ was defined before \eqref{ctrl-EuSDE1}.

  Moreover, the same assertion is true for $(\bar R_\vep,  \Xi^R_\vep(f)/\mdpsc)$ if
 \begin{itemize}
 	\item Assumption \ref{assum-f-2}-(A) holds and $\Delta(\vep)$ is chosen such that $$\min\lf\{(\Delta(\vep)/\vep)^{\nu/2}/\sqrt\vep, (\Delta(\vep)/\vep)^{\nu_f/2}/\mdpsc\ri\} \rt 0,$$ 
 	as $\vep \rt 0$ (thus, in particular, if $(\Delta(\vep)/\vep)^{\tilde \nu/2}/\sqrt \vep \rt 0$, where $\tilde \nu = \nu \wedge \nu_f$); OR,
 	\item Assumption \ref{assum-f-2}-(B) holds with $p_0' \leq \alpha$, and $\Delta(\vep)$ is chosen such that 
 	$(\Delta(\vep)/\vep)^{\nu/2}/\sqrt\vep \rt 0$ as $\vep \rt 0$.
 \end{itemize}

\end{proposition}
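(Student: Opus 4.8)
The plan is to exploit the Poisson equation to convert the time-rescaled integral functional into a sum of terms that are either martingales (which vanish in the limit by the scaling and the $L^2$-bound on controls) or lower-order correction terms arising from the Euler discretization. First I would apply It\^o's lemma to $u(s,\bar Z_\vep(s))$ where $u$ solves $\SC{L}u = -f$, exactly as in \eqref{u-Ito-Z} of the CLT section but in the controlled setting. Since $\SC{L}u(s,\cdot)(x) = -f(s,x)$, the $\frac{1}{\vep}$-drift term produces precisely $-\frac{1}{\vep}\int_0^t f(s,\bar Z_\vep(s))ds$; rearranging gives
\begin{align*}
\bar\Up_\vep(f)(t) = \frac{1}{\mdpsc}\int_0^t f(s,\bar Z_\vep(s))ds = \frac{\vep}{\mdpsc}\Big[ u(0,x_0) - u(t,\bar Z_\vep(t)) + \text{(time-derivative term)} + \bar M_\vep^u(t) + \text{(discretization corrections)} + \frac{\mdpsc}{\vep}\int_0^t Du(s,\bar Z_\vep(\vr_\vep(s)))\s(\bar Z_\vep(\vr_\vep(s)))\psi(s)\,ds \Big],
\end{align*}
where $\bar M_\vep^u$ is the stochastic integral of $Du\,\s$ against $W$. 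The surviving term, after the $\vep/\mdpsc$ multiplication, is $\int_0^t Du(s,\bar Z_\vep(\vr_\vep(s)))\s(\bar Z_\vep(\vr_\vep(s)))\psi(s)\,ds$, which when written through the occupation measure $\bar R_\vep$ becomes (up to a $\vr_\vep$-discretization error) $\int_{\B_t} Du(s,x)\s(x)z\,\bar R_\vep(d\y)$ — matching \eqref{Rset-2}.

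The key steps, in order: (1) Establish tightness of $\{\bar R_\vep\}$ in $\SC{M}_1(\B_T)$. Since $\bar R_{\vep,(1)} = \l_T$ is fixed and the second marginal mass is controlled by $\EE\int_0^T\|\psi_\vep\|^2 \leq M$ together with the moment bound $\EE\int_0^T\|\bar Z_\vep(s)\|^{2\alpha}ds < \infty$ from Lemma \ref{ctrlproc-bd-2}, a standard tightness criterion for measures with fixed first marginal on a product of Polish spaces applies (the $\R^d$-marginal is tight by the uniform moment bound, the $\R^m$-marginal by the uniform integrability coming from the $L^2$ control bound). (2) Show the terms $\frac{\vep}{\mdpsc}u(0,x_0)$, $\frac{\vep}{\mdpsc}u(t,\bar Z_\vep(t))$, $\frac{\vep}{\mdpsc}\bar M_\vep^u(t)$ and the time-derivative term all converge to $0$ uniformly on $[0,T]$ in probability — here $\frac{\vep}{\mdpsc} = \sqrt{\vep}\cdot\frac{\sqrt\vep}{\mdpsc}$ and $\sqrt\vep/\mdpsc \to 0$ in the MDP regime, while Corollary \ref{mmt-bd-ctrl} gives $\vep\EE[\sup_{r\leq t}\|\bar Z_\vep(r)\|^{1+\alpha}] < \infty$ and Condition \ref{u-reg-assum}-(i) with $p_1 \leq (1+\alpha)/2$ (Assumption \ref{assum-exp-u}-(i)) controls $u(t,\bar Z_\vep(t))$; the martingale is handled by Burkholder–Davis–Gundy together with the growth bound on $Du$ (Condition \ref{u-reg-assum}-(ii), Assumption \ref{assum-exp-u}-(ii)) and the moment bounds; the time-derivative term uses Condition \ref{u-reg-assum}-(iii)–(iv) with $\modu(\Delta)=\sqrt\Delta$ and the fact that $\Delta/\vep\to 0$. (3) Show the discretization-correction terms (those involving differences $Du(s,\bar Z_\vep(s))-Du(s,\bar Z_\vep(\vr_\vep(s)))$, $\s$-differences, and $D^2u$ terms evaluated at $\bar Z_\vep(\vr_\vep(s))$ versus $\bar Z_\vep(s)$) vanish: Lemma \ref{lem-diff-est} gives the needed increment estimates, and the condition $(\Delta(\vep)/\vep)^{\nu/2}/\sqrt\vep \to 0$ is exactly what makes $\sqrt\vep \cdot (\Delta/\vep)^{\nu/2}/\sqrt\vep \cdot (\text{moment bounds}) \to 0$ after accounting for the H\"older exponent $\nu$ and the growth exponents $q_1, q_2, p_3$ bounded via Assumption \ref{assum-exp-u}-(iii)–(iv). (4) Identify the limit: from the representation, any joint limit point $(R,\xi)$ of $(\bar R_\vep,\bar\Up_\vep(f))$ satisfies $\xi(t) = \int_{\B_t}Du(s,x)\s(x)z\,R(d\y)$, i.e. \eqref{Rset-2}; the constraint \eqref{Rset-1} follows from lower-semicontinuity of $R\mapsto\int\|z\|^2R(d\y)$ under weak convergence and Fatou applied to the uniform bound $\EE\int_0^T\|\psi_\vep\|^2 \leq M$; and \eqref{Rset-3} follows by applying It\^o to $g(\bar Z_\vep(t))$ for $g\in C_b^2$, dividing by the relevant scale, and noting $\frac{1}{\vep}\int_0^t \SC{L}g(\bar Z_\vep(\vr_\vep(s)))ds$ is the dominant term while $g(\bar Z_\vep(t))-g(x_0)$, the martingale, the control term (scaled by $\mdpsc/\vep$), and the $\vr_\vep$-discretization error are all $o(1/\vep)$ after multiplying through by $\vep$ — giving $\int_{\B_t}\SC{L}g(x)R(d\y)=0$.

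(5) For the Riemann-sum version $\Xi_\vep^R(f)/\mdpsc = \frac{1}{\mdpsc}\int_0^\cdot f(\vr_\vep(s),\bar Z_\vep(\vr_\vep(s)))ds$, I would bound the difference $\frac{1}{\mdpsc}\int_0^t\big(f(\vr_\vep(s),\bar Z_\vep(\vr_\vep(s))) - f(s,\bar Z_\vep(s))\big)ds$ using Assumption \ref{assum-f}-(ii) for the time-increment and either H\"older continuity of $f$ (Assumption \ref{assum-f-2}-(A), combined with Lemma \ref{lem-diff-est} and moment bounds, needing $(\Delta/\vep)^{\nu_f/2}/\mdpsc\to 0$) or the derivative bound (Assumption \ref{assum-f-2}-(B) with $p_0'\leq\alpha$, using a Taylor expansion and $Df$-growth controlled by Lemma \ref{ctrlproc-bd-2}); this error goes to $0$ in probability uniformly on $[0,T]$ under the stated $\Delta(\vep)$-conditions, so $(\bar R_\vep, \Xi_\vep^R(f)/\mdpsc)$ has the same limit points as $(\bar R_\vep, \bar\Up_\vep(f))$.

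The main obstacle I expect is step (3) together with the bookkeeping in step (5): controlling the discretization-correction terms requires simultaneously balancing (a) the H\"older modulus of $b$, $\s$ (exponent $\nu$), (b) the modulus of continuity of $f$, $u$, $Du$ in the time variable ($\modu(\Delta)=\sqrt\Delta$), (c) the polynomial growth exponents $p_1,p_2,p_3,q_0,q_1,q_2$ and their constraints in Assumption \ref{assum-exp-u}, and (d) the integrability ceiling $2\alpha$ available from Lemma \ref{ctrlproc-bd-2} (and only $\alpha$ from Corollary \ref{dis-mmt-bd-ctrl} for quantities evaluated along the coarser partition). Each correction term must be shown to be $O\big(\sqrt\vep\cdot(\text{negative power of }\vep)\big)$ in $L^1$ after the $\vep/\mdpsc$ scaling, and the exponent arithmetic is delicate — this is precisely where the hypotheses $(\Delta(\vep)/\vep)^{\nu/2}/\sqrt\vep\to 0$ and the bounds in Assumption \ref{assum-exp-u} are used to their full strength. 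The remark preceding the proposition indicates that when $\s$ is constant the $D^2u$-terms disappear and the scaling can be relaxed, which is a useful sanity check on the exponent computations.
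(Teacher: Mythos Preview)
Your overall architecture matches the paper's proof: tightness of $\bar R_\vep$ from the moment bound of Lemma \ref{ctrlproc-bd-2} and the $L^2$ control bound; the Poisson-equation decomposition of $\bar\Up_\vep(f)$; vanishing of boundary, martingale, and discretization-error terms using Condition \ref{u-reg-assum}, Assumption \ref{assum-exp-u}, Lemma \ref{lem-diff-est} and Corollary \ref{mmt-bd-ctrl}; identification of \eqref{Rset-1}--\eqref{Rset-3}; and the extra $f$-increment estimate for the Riemann-sum variant. Two points deserve attention.

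First, a genuine gap: you never establish tightness of $\bar\Up_\vep(f)$ in the \emph{path space} $C([0,T]:\R^n)$. After your steps (2)--(3) you have $\bar\Up_\vep(f) = \bar\Lambda_\vep + o(1)$ with $\bar\Lambda_\vep(t)=\int_0^t Du(s,\bar Z_\vep(s))\s(\bar Z_\vep(s))\psi_\vep(s)\,ds$, but $\bar\Lambda_\vep$ is \emph{not} a continuous functional of $\bar R_\vep$ under weak convergence (the integrand $Du\,\s\,z$ is unbounded), so tightness of $\bar R_\vep$ alone does not give tightness of $\bar\Lambda_\vep$. The paper supplies a separate equicontinuity estimate: for $0\le t\le t+h\le T$ one splits $\int_t^{t+h}\|\bar Z_\vep(s)\|^{p_2}\|\psi_\vep(s)\|\,ds$ according to $\|\bar Z_\vep(s)\|\le K$ or $>K$, bounding the first piece by $K^{p_2}M^{1/2}h^{1/2}$ and the second by $K^{-(\alpha-p_2)}\big(\int_0^T\|\bar Z_\vep\|^{2\alpha}+M\big)$, then chooses $K=h^{-1/4p_2}$. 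This is precisely where the \emph{strict} inequality $p_2<\alpha$ in Assumption \ref{assum-exp-u}-(ii) is used, and without it the proof does not close.

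Second, a point of phrasing: you write ``apply It\^o's lemma to $u(s,\bar Z_\vep(s))$'' and speak of a ``time-derivative term'', but $u$ is not assumed $C^1$ in $t$ --- Condition \ref{u-reg-assum} only gives a modulus of continuity. The equation \eqref{u-Ito-Z} you cite (and the paper's \eqref{u-Ito}) avoids this by introducing a \emph{coarser} partition $\{\tilde t_k\}$ of mesh $\tilde\Delta=\vep$, applying It\^o--Krylov to $u(\tilde t_k,\cdot)$ with the first argument frozen, and then summing; the resulting telescoping term $\SC{E}^\vep_1=\frac{\vep}{\mdpsc}\sum_k\big(u(\tilde t_{k+1},\bar Z_\vep(\tilde t_{k+1}))-u(\tilde t_k,\bar Z_\vep(\tilde t_{k+1}))\big)$ is what your ``time-derivative term'' should be, controlled via Condition \ref{u-reg-assum}-(iii) and Corollary \ref{dis-mmt-bd-ctrl} (or Remark \ref{int-disc-est-1}). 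Since you already invoke those conditions, this is likely just loose wording, but the coarse-partition device is essential and should be made explicit.
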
	

\begin{proof}
	We start by establishing the tightness of $\bar R_\vep$ and toward this end,  we need to show that for every $\eta>0$, there exists a constant $C_\eta$ such that
	\begin{align} \label{R-tight}
		\sup_\vep \EE \bar R_\vep \lf\{\y: \|x\| + \|z\| >C_\eta\ri\} \leq \eta
	\end{align}
	where recall that $\y$ denotes a typical tuple $(s,x,z)$ in $\B_T$.
	Note that  for all $0<\vep<1$,
	\begin{align} \label{R-mmtbd-1}
		\int_0^T \|\psi^\vep(s)\|^2 ds  = \int_{\B_T} \|z\|^2 \bar R_\vep(d\y) \leq M,
	\end{align}
	and by Lemma \ref{ctrlproc-bd-2},
	\begin{align} \label{R-mmtbd-2}
		\sup_\vep \EE \int_{\B_T} \|x\|^{2\alpha} \bar R_\vep(d\y) = \sup_{\vep} \EE\int_{0}^T \|\bar Z_\vep(s)\|^{2\alpha} ds <\infty.
	\end{align}
	\eqref{R-tight} now follows after an application of Markov inequality. 
	
	Let $\{\tilde t_k\}_{k=0}^N$ be a partition of $[0,T]$ such that  $\tilde \Delta = \tilde t_k - \tilde t_{k-1} =\vep$.
	Applying It\^o-Krylov lemma \cite{Kry80} to each component $u_l$, we have for $r \in [t_k, t_{k+1}]$,
	\begin{align} \non
		u_l(\tilde t_k, \bar Z_\vep(r))  =& \ u_l(\tilde t_k, \bar Z_\vep(\tilde t_k) ) + \f{1}{\vep}\lf( \int_{\tilde t_k}^r \nabla^Tu_l(\tilde t_k, \bar Z_\vep(s))b(\bar Z_\vep(\vr_\vep(s))) ds \ri.\\ \non
&	\	 + \lf.\f{1}{2} \int_{\tilde t_k}^r tr\lf(D^2u_l(\tilde t_k, \bar Z_\vep(s))a(\bar Z_\vep(\vr_\vep(s)))\ri)  ds \ri)\\
 \non
 & +  \f{\mdpsc }{\vep} \int_{\tilde t_k}^r \nabla^Tu_l(\tilde t_k, \bar Z_\vep(s)) \s(\bar Z_\vep(\vr_\vep(s))) \psi_\vep(s) ds\\
\non
 & +\f{1}{\vep^{1/2}}\int_{\tilde t_k}^r \nabla^Tu_l(\tilde t_k, \bar Z_\vep(s))\s(\bar Z_\vep(\vr_\vep(s)))dW(s).
 \end{align}
 Let $k_0 = \max\{k: \tilde t_k <t\}$ and without loss of generality assume that $\tilde t_{k_0+1}=t$.  Summing over $k$, we then have
 \begin{align}\non
\sum_{k=0}^{k_0}\lf(u_l(\tilde t_k, \bar Z_\vep(\tilde t_{k+1}))  -  u_l(\tilde t_k, \bar Z_\vep(\tilde t_k) )\ri)= &\  \f{1}{\vep} \int_0^t \SC{L}u(\eta_\vep(s), \cdot)(\bar Z_\vep(s))ds + \mdpsc \SC{E}^\vep_0(t)/\vep	\\ \non
	 &\ +  \f{\mdpsc }{\vep} \int_{0}^t \nabla^Tu_l(\eta_\vep(s), \bar Z_\vep(s)) \s(\bar Z_\vep(\vr_\vep(s))) \psi_\vep(s) ds\\
	   \label{u-Ito-0}
 &\ +\f{1}{\vep^{1/2}}\int_{0}^t \nabla^Tu_l(\eta_\vep(s), \bar Z_\vep(s))\s(\bar Z_\vep(\vr_\vep(s)))dW(s),
	\end{align}	
where, as before, $\eta_\vep(s) = \tilde t_{k}$ if $\tilde t_k< s \leq \tilde t_{k+1},$ and
\begin{align*}
 \SC{E}^\vep_{0,l}(t)   =&\  \f{1}{\mdpsc }\lf(\int_{0}^t \lf[\nabla^Tu_l(\eta_\vep(s), \bar Z_\vep(s))b(\bar Z_\vep(\vr_\vep(s))) ds 
	 + \f{1}{2}  tr\lf(D^2u_l(\eta_\vep(s), \bar Z_\vep(s))a(\bar Z_\vep(\vr_\vep(s)))\ri)\ri]  ds\ri.\\
	&\ -  \lf. \int_0^t \SC{L}u_l(\eta_\vep(s), \cdot)(\bar Z_\vep(s))ds\ri),
\end{align*}	
and therefore from \eqref{u-Ito-0},
	\begin{align}\non
		\f{\vep}{\mdpsc }	\lf(u(t, \bar Z_\vep(t)) - u(0, x_0)\ri)  = &  \f{\vep}{\mdpsc }\sum_{k=0}^{k_0} \lf(u(\tilde t_{k+1}, \bar Z_\vep(\tilde t_{k+1})) - u(\tilde t_k, \bar Z_\vep(\tilde t_{k+1}))\ri) \\ \non
		& + \f{\vep}{\mdpsc } \sum_{k=0}^{k_0}\lf(u(\tilde t_k, \bar Z_\vep(\tilde t_{k+1})) - u(\tilde t_k, \bar Z_\vep(\tilde t_{k}))\ri)\\ \non
		= &\ \SC{E}^\vep_0(t)+\SC{E}^\vep_1(t)  - \f{1}{\mdpsc } \int_0^t  f(\eta_\vep(s), \bar Z_\vep(s)) ds \\ \non
		& + \int_0^t Du(\eta_\vep(s), \bar Z_\vep(s)) \s(\bar Z_\vep(s)) \psi_\vep(s)ds\\ \non
		& \hs{0.5cm} + \f{\sqrt{\vep}}{\mdpsc }\int_0^t Du(\eta_\vep(s), \bar Z_\vep(s))\s(\bar Z_\vep(\vr_\vep(s)))dW(s)\\ \non
		 =& \ - \bar \Up_\vep(f)(s) +  \int_{\B_t}  Du(s, x) \s(x) z \bar R_\vep(d\y)\\ \non
		 &\ +\ \f{\sqrt{\vep}}{\mdpsc }\int_0^t  Du(\eta_\vep(s), \bar Z_\vep(s))\s(\bar Z_\vep(\vr_\vep(s)))dW(s)\\ \label{u-Ito}
		 &\ + \SC{E}^\vep_0(t)+\SC{E}^\vep_1(t)+\SC{E}^\vep_2(t) + \SC{E}^\vep_3(t), 
	\end{align}
where the quantities $\SC{E}^\vep_i(t)$ are defined below:
\begin{align*}
\SC{E}^\vep_1(t) \doteq &  \f{\vep}{\mdpsc }\sum_k\lf(u(\tilde t_{k+1}, \bar Z_\vep(\tilde t_{k+1})) - u(\tilde t_k, \bar Z_\vep(\tilde t_{k+1}))\ri);\\
\SC{E}^\vep_2(t) \doteq & \f{1}{\mdpsc }\lf( \int_0^t  f(s, \bar Z_\vep(s)) ds -  \int_0^t  f(\eta_\vep(s), \bar Z_\vep(s)) \ri) ds;\\
\SC{E}^\vep_3(t) \doteq & \int_0^t Du(\eta_\vep(s), \bar Z_\vep(s)) \s(\bar Z_\vep(\vr_\vep(s))) \psi_\vep(s) ds - \int_0^t Du(s, \bar Z_\vep(s)) \s(\bar Z_\vep(s)) \psi_\vep(s) ds,
\end{align*}	
and $\SC{E}^\vep_0$ is of course given by $\SC{E}^\vep_0 = (\SC{E}^\vep_{0,1}, \hdots,\SC{E}^\vep_{0,n})^T.$

Recalling that $\tilde \Delta = \vep$ , it follows from Assumption \ref{assum-f}-(ii), the fact that $q_0\leq 2\alpha$ (Assumption \ref{assum-exp-u}-(iii)), and Lemma \ref{ctrlproc-bd-2} that
\begin{align}\label{errf-diff}
\EE\lf(\sup_{s\leq t}\|\SC{E}^\vep_2(s)\|\ri) \leq &\  \const(T) \f{\sqrt \vep}{\mdpsc }\lf(\int_0^t (1+ \EE(\|\bar Z_\vep(s)\|^{q_0})ds\ri) \rt 0,
\end{align}	
as $\vep \rt 0$. Next, by Condition \ref{u-reg-assum}-(ii) and H\"older continuity of $\s$,
\begin{align*}
	\|\SC{E}^\vep_3(s)\| \leq & \  \const_1(T)\|\s\|_\infty \sqrt \vep \int_0^t (1+ \|\bar Z_\vep(s)\|)^{q_2} \|\psi_\vep(s)\|ds \\ 
	& \ + \const_1(T)L_\s \int_0^t (1+ \|\bar Z_\vep(s)\|)^{p_2} \|\bar Z_\vep(s) - \bar Z_\vep(\vr_\vep(s))\|^\nu \|\psi_\vep(s)\| ds \\
	& \ \equiv  \SC{E}^\vep_{3,1}(t) + \SC{E}^\vep_{3,2}(t). 
\end{align*}
By (a) the assumption that $q_2 \leq \alpha$ (Assumption \ref{assum-exp-u}-(iii)), (b) Lemma \ref{ctrlproc-bd-2}, and (c) Cauchy-Schwarz inequality,
\begin{align*}
\EE\lf(\sup_{s\leq t}|\SC{E}^\vep_{3,1}(s)|\ri) \leq &\  \const_1(T)\|\s\|_\infty \sqrt \vep \EE \lf(\int_0^t (1+ \|\bar Z_\vep(s)\|)^{q_2} \|\psi_\vep(s)\|ds\ri) \\ 
 \leq & \const_1(T)\|\s\|_\infty \sqrt \vep  \EE \lf(\int_0^t\EE (1+\|\bar Z_\vep(s)\|)^{2q_2}ds\ \EE\int_0^t \|\psi_\vep(s)\|^2 ds\ri)^{1/2}\\
 \leq & \const_1(T)\|\s\|_\infty  M^{1/2} \sup_{\vep} \lf(\int_0^t \EE\lf(1+\|\bar Z_\vep(s)\|\ri)^{2q_2} ds\ri)^{1/2}\sqrt \vep\  
 \rt \  0,
\end{align*}	
as $\vep \rt0$. Also, 
\begin{align*}
\EE\lf(\sup_{s\leq t}|\SC{E}^\vep_{3,2}(s)|\ri) \leq &\  \const_1(T)L_\s \EE\lf( \int_0^t (1+ \|\bar Z_\vep(s)\|)^{p_2} \|\bar Z_\vep(s) - \bar Z_\vep(\vr_\vep(s))\|^\nu \|\psi_\vep(s)\| ds\ri)\\
\leq &  \const_1(T)L_\s   \EE\lf(  \sup_{s\leq T}(1+ \|\bar Z_\vep(s)\|)^{p_2}\int_0^t \|\bar Z_\vep(s) - \bar Z_\vep(\vr_\vep(s))\|^\nu \|\psi_\vep(s)\| ds\ri)\\
\leq &  \const_1(T)L_\s   \EE\lf[  \sup_{s\leq T}(1+ \|\bar Z_\vep(s)\|)^{p_2}\lf(\int_0^t \|\bar Z_\vep(s) - \bar Z_\vep(\vr_\vep(s))\|^{2\nu}\ri)^{1/2} \lf(\int_0^t\|\psi_\vep(s)\|^2 ds\ri)^{1/2}\ri]\\
\leq &  \const_1(T)L_\s M^{1/2}  \EE\lf[  \sup_{s\leq T}(1+ \|\bar Z_\vep(s)\|)^{p_2}\lf(\int_0^t \|\bar Z_\vep(s) - \bar Z_\vep(\vr_\vep(s))\|^{2\nu}ds\ri)^{1/2}\ri]\\
\leq &  \const_1(T)L_\s M^{1/2}  \lf[\EE\lf( \sup_{s\leq T}(1+ \|\bar Z_\vep(s)\|)^{2p_2}\ri)\ri]^{1/2}\lf[\int_0^t\EE \|\bar Z_\vep(s) - \bar Z_\vep(\vr_\vep(s))\|^{2\nu}ds\ri]^{1/2}\\
\leq &  \const_1(T)L_\s M^{1/2} \tilde C^4(T)  \f{(\Delta(\vep)/\vep)^{\nu/2}}{\sqrt \vep}\lf[\vep\EE\lf( \sup_{s\leq T}(1+ \|\bar Z_\vep(s)\|)^{2p_2}\ri)\ri]^{1/2}\\
& \ \times \lf[\int_0^t (1+\EE \|\bar Z_\vep(s)\|^{2\nu\alpha})ds\ri]^{1/2}
\leq \ \bar C^0(T)   \f{(\Delta(\vep)/\vep)^{\nu/2}}{\sqrt \vep},
\end{align*}	
for some constant $\bar C^0(T)$, where we used  (a) Corollary \ref{mmt-bd-ctrl} (b) the assumption that $p_2 \leq (1+\alpha)/2$ (Assumption \ref{assum-exp-u}-(ii)), and (c) Lemma \ref{ctrlproc-bd-2}.
Thus by the choice of discretization step $\Delta(\vep)$ (see (i) in the hypotheses of the proposition), as $\vep \rt 0,$
$$\EE\lf(\sup_{s\leq t}|\SC{E}^\vep_{3,2}(s)|\ri) \rt 0.$$
We now consider $\SC{E}^\vep_1$. Note that because of  Condition \ref{u-reg-assum}-(iii)
\begin{align*}
|\SC{E}^\vep_1(t)| \leq &\ \f{\vep}{\mdpsc } \sum_{k=0}^{k_0} \const_1(T) \lf(1+ \|\bar Z_\vep(\tilde t_{k+1})\|^{q_1}\ri) \tilde\Delta^{1/2}\\
                          = &\ \f{\sqrt\vep}{\mdpsc } \sum_{k=1}^{k_0+1} \const_1(T) \lf(1+ \|\bar Z_\vep(\tilde t_{k})\|^{q_1}\ri) \vep\\
      \leq & \  \f{\sqrt\vep}{\mdpsc } \const_1(T) \int_0^T  \lf(1+ \|\bar Z_\vep(\eta_\vep(s))\|^{q_1}\ri) ds,
  \end{align*}
 and because of Assumption \ref{assum-exp-u}-(iii), it follows either by Remark \ref{int-disc-est-1} or by Corollary \ref{dis-mmt-bd-ctrl} (depending on whether $0<\alpha \leq 1$ or not)  that $\EE\lf(\sup_{s\leq T}\|\SC{E}^\vep_1(s)\|\ri) \rt 0$ as $\vep \rt 0$. 
 
 To estimate $\SC{E}^{\vep}_{0}$, note that for each $l$, by Condition \ref{u-reg-assum}-(ii) \& (v),
 \begin{align*}
 \sup_{t\leq T}\|\SC{E}^{\vep}_{0,l}(t)\| \leq & \ \f{1}{\mdpsc } \lf(\int_0^T \|\nabla^Tu_l(\eta_\vep(s), \bar Z_\vep(s))\| \|b(\bar Z_\vep(s)) - b(\bar Z_\vep(\vr_\vep(s)))\| ds \ri.\\
 & \ + \lf. \int_0^T \|D^2u_l(\eta_\vep(s), \bar Z_\vep(s))\|\|a(\bar Z_\vep(s)) - a(\bar Z_\vep(\vr_\vep(s)))\| ds\ri)\\
 \leq & \f{\bar C^1(T)}{\mdpsc } \int_0^T (1+\| \bar Z_\vep(s)\|^{p_2 \vee p_3})\|\bar Z_\vep(s) - \bar Z_\vep(\vr_\vep(s))\|^{\nu} ds,
 \end{align*}
 for some constant $\bar C^1(T)$.
 Hence by (a) Cauchy-Schwarz inequality, (b) Lemma \ref{lem-diff-est}-(iv), (c) Assumption \ref{assum-exp-u}-(ii) \& (iv), and  (d) Lemma \ref{ctrlproc-bd-2},
 \begin{align*}
 \EE\lf(\sup_{t\leq T}\|\SC{E}^{\vep}_{0,l}(t)\| \ri) \leq & \ \f{\bar C^3(T)}{\mdpsc } \lf(\int_0^T\EE(1+\| \bar Z_\vep(s)\|^{2(p_2 \vee p_3)})ds \int_0^T \EE\|\bar Z_\vep(s) - \bar Z_\vep(\vr_\vep(s))\|^{2\nu}ds\ri)^{1/2}\\
 \leq &\ \bar C^3(T) \tilde C^4(T) \f{(\Delta/\vep)^{\nu/2}}{\mdpsc}  \lf(\int_0^T\EE(1+\| \bar Z_\vep(s)\|^{2(p_2 \vee p_3)})ds \int_0^T \EE (1+\|\bar Z_\vep(s)\|^{2\nu \alpha}ds\ri)^{1/2}\\
 \leq & \  \bar C^4(T) \f{(\Delta/\vep)^{\nu/2}}{\mdpsc} = \bar C^4(T) \f{(\Delta/\vep)^{\nu/2}}{\sqrt \vep}\f{\sqrt \vep}{\mdpsc}     \rt 0,
 \end{align*}
 as $\vep \rt 0$ by the choice of grid size $\Delta(\vep)$. Here $\bar C^3(T), \bar C^4(T)$ are appropriate constants.

	We  next show that as $\vep \rt 0$
	\begin{align} \label{conv-u}
		\f{\vep}{\mdpsc } \EE\lf[\sup_{t\leq T}\|u(t, \bar Z_\vep(t)) - u(0, x_0)\|\ri] \rt 0.
	\end{align}
	Since $p_1 \leq (1+\alpha)/2$ (Assumption \ref{assum-exp-u}-(i)), there exists a constant $\bar C^5$ such that $\|x\|^{p_1} \leq \bar C^5(1+\|x\|^{(1+\alpha)/2})$. Consequently,
	\begin{align*}
		\f{\vep}{\mdpsc } \EE\lf(\sup_{s\leq t} \|\bar Z_\vep(s)\|^{p_1}\ri) \leq  & \f{\vep}{\mdpsc }\bar C^5+ \f{\vep}{\mdpsc } \EE\lf(\sup_{s\leq t}  \|\bar Z_\vep(s)\|^{(1+\alpha)/2}\ri)\\
		\leq & \f{\vep}{\mdpsc }\bar C^5+ \f{\sqrt \vep}{\mdpsc } \lf[\vep \EE\lf(\sup_{s\leq t}  \|\bar Z_\vep(s)\|^{1+\alpha}\ri)  \ri]^{1/2} \rt 0, 
	\end{align*}	
	as $\vep \rt 0$ by Corollary \ref{mmt-bd-ctrl}, and \eqref{conv-u} follows because of Condition \ref{u-reg-assum}-(i). 
	
	For the martingale term we use Burkholder-Davis-Gundy inequality, Lemma \ref{ctrlproc-bd-2} and the fact that $p_2< \alpha$, to get for each $l=1,2,\hdots,n,$
	\begin{align*}
		\EE\lf[\sup_{r\leq t}|\f{\sqrt{\vep}}{\mdpsc }\int_0^r \nabla^T u_l(\eta_\vep(s), \bar Z_\vep(s))\s( \bar Z_\vep(s))dW(s)|^2\ri] \leq &
		\f{\vep}{\mdpscii} \EE \int_0^t \|\nabla^T u_l(\eta_\vep(s), \bar Z_\vep(s))\s(\bar Z_\vep(s))\|^2 ds\\
		\leq & \|\s\|^2_\infty \const_1(T)  \f{ \vep}{\mdpscii} \EE \int_0^t (1+ \|\bar Z_\vep(s)\|^{p_2})^2 ds\\
		& \ \rt  0,
	\end{align*}
	as $\vep \rt 0$. 

		It now follows from \eqref{u-Ito} that to show tightness $\bar\Up_\vep(f)$ we only need to show tightness of $\bar\L_\vep$, where
		\begin{align*}
			\bar\L_\vep(t) = 	\int_{\B_t} Du(s, x) \s(x) z \bar R_\vep(d\y) = \int_0^t Du(s, \bar Z_\vep(s)) \s(\bar Z_\vep(s)) \psi_\vep(s)ds. 
		\end{align*}	
		%
		Toward this end, notice that by Condition \ref{u-reg-assum}-(ii) for any $K>0$,
		\begin{align*}
			\|\bar\L_\vep(t+h) - \bar\L_\vep(t) \| \leq & \	\bar C^{5,0}(T)\int_{t}^{t+h} (1+\|\bar Z_\vep(s)\|^{p_2})\|\psi_\vep(s)\| ds\\
			\leq &	C^{5,0}(T)\lf[  M^{1/2}h^{1/2} + \int_{t}^{t+h}\|\bar Z_\vep(s)\|^{p_2}\|\psi_\vep(s)\|ds\ri]\\
			\leq & C^{5,0}(T)\lf[M^{1/2}h^{1/2} + K^{p_2}\int_{t}^{t+h}\|\psi_\vep(s)\|ds + \int_{t}^{t+h}\|\bar Z_\vep(s)\|^{p_2}1_{\{\|\bar Z_\vep(s)\| >K\}}\|\psi_\vep(s)\|ds\ri]\\
			\leq & C^{5,0}(T)\lf[ M^{1/2}h^{1/2} + K^{p_2}M^{1/2}h^{1/2} + \f{1}{K^{\alpha-p_2}}\int_{t}^{t+h}\|\bar Z_\vep(s)\|^{\alpha}\|\psi_\vep(s)\|ds\ri]\\
			\leq&  C^{5,0}(T)\lf[M^{1/2}h^{1/2} + K^{p_2}M^{1/2}h^{1/2} + \f{1}{K^{\alpha-p_2}}\lf(\int_{0}^{T}\|\bar Z_\vep(s)\|^{2\alpha}ds +M\ri)\ri],
		\end{align*}
		where $C^{5,0}(T) = \const_1(T)\|\s\|_\infty$ is a constant independent of $K$. 
		Taking $K = h^{-1/4p_2}$, and using Lemma \ref{ctrlproc-bd-2} we have that for some constant $\bar C^{5,1}(T)$,
		\begin{align*}
			\EE\lf[\sup_{0\leq t\leq t+h\leq T}  \|\bar\L_\vep(t+h) - \bar\L_\vep(t) \|\ri] \leq \bar C^{5,1}(T)\lf(h^{1/2}+h^{1/4}+h^{(\alpha-p_2)/4p_2}\ri)
		\end{align*}	
		Recalling that $\alpha>p_2$, tightness of 	$\bar\L_\vep$ is now immediate. Here, of course, we assumed $p_2>0$. The argument for $p_2=0$ (that is, when $Du$ is bounded) is much simpler.
		
	Let $(R, \xi)$ be a limit point of $\{(\bar R_\vep, \bar \Up_\vep(f))\}$ and by Skorohod representation theorem assume without loss of generality that $(\bar R_\vep, \bar \Up_\vep(f)) \rt (R, \xi)$ a.s in $\SC{M}_1(\B_T)\times C([0,T]:\R^d)$ as $\vep \rt 0$, at least, along some subsequence.
	Note that \eqref{Rset-1} follows from \eqref{R-mmtbd-1}
	and Fatou's lemma.	
		
		Now Condition \ref{u-reg-assum}-(ii),  
		the fact that $p_2<\alpha$  (Assumption \ref{assum-exp-u}-(ii)),  \eqref{R-mmtbd-1}, \eqref{R-mmtbd-2}, and an application of Lemma \ref{lem-weakconv} imply that as $\vep \rt 0$,
		\begin{align*}
			\int_{\B_t} Du(s, x) \s(x) z \bar R_\vep(d\y) \rt  \int_{\B_t} Du(s, x) \s(x) z  R(d\y).
		\end{align*}
	Thus from \eqref{u-Ito} and the above calculations  it follows that \eqref{Rset-2} holds, that is,
	$$\xi(t) = \int_{\B_t} Du(s, x) \s(x) z  R(d\y).$$
	
%
	Finally, for \eqref{Rset-3}, let  $g \in C^2_b(\R^d, \R)$.  Then a simpler version of \eqref{u-Ito-0} with $u$ replaced by $g$ and much easier calculations  reveal  that 
		$$\int_{\B_t} \SC{L}g(x) R(d\y) = 0,\quad  0\leq t \leq T.$$

	For the result on $\bar \Xi^R_\vep(f),$ notice we only need to show that $\tilde{\SC{E}}^\vep_2$, defined by
	\begin{align*}
	\tilde{\SC{E}}^\vep_2(t) = & \f{1}{\mdpsc } \lf(\int_0^t  f(\eta_\vep(s), \bar Z_\vep(s)) ds -\int_0^t  f(\vr_\vep(s), \bar Z_\vep(\vr_\vep(s))) ds \ri) 
	\end{align*}	
goes to $0$. We work only under Assumption \ref{assum-f-2}-B. The steps under Assumption \ref{assum-f-2}-A are similar and simpler. 

Writing
\begin{align*}
\tilde{\SC{E}}^\vep_2(t) = &\ \f{1}{\mdpsc } \int_0^t \lf( f(\eta_\vep(s), \bar Z_\vep(s))  - f(\vr_\vep(s),  \bar Z_\vep(s)) \ri)ds \\
& \hs{.2cm}- \f{1}{\mdpsc } \int_0^t \lf( f(\vr_\vep(s),  \bar Z_\vep(s)) - f(\vr_\vep(s),  \bar Z_\vep(\vr_\vep(s))) \ri)ds \\
	  \equiv & \ \tilde{\SC{E}}^\vep_{2,1}(t) + \tilde{\SC{E}}^\vep_{2,2}(t), 
\end{align*}
it is immediate that (c.f. \eqref{errf-diff})  
\begin{align*}
\EE\lf[\sup_{t\leq T }|\tilde{\SC{E}}^\vep_{2,1}(t)|\ri] \ \rt 0,
\end{align*}
as $\vep \rt 0$.

Next, for each $l=1,\hdots,n$, by the mean value theorem,
\begin{align*}
	\tilde{\SC{E}}^\vep_{2,2,l}(t) = \f{1}{\mdpsc } \int_0^t  \nabla f_l\lf(\eta_\vep(\vr_\vep(s)), \theta_l(s) \bar Z_\vep(\vr_\vep(s))+(1-\theta_l(s))\bar Z_\vep(s)\ri) \lf( \bar Z_\vep(\vr_\vep(s)) -  \bar Z_\vep(s)\ri)ds 
	\end{align*}
for some $\theta_l(s) \in (0,1).$ Thus by Assumption \ref{assum-f-2}-B,
\begin{align*}
\sup_{t\leq }|\tilde{\SC{E}}^\vep_{2,2,l}(t)| \leq & \f{\bar C^6(T)}{\mdpsc}\int_0^t \lf(\|\bar Z_\vep(\vr_\vep(s))\|^{p_0'}+\|\bar Z_\vep(s)\|^{p_0'}\ri) \| \bar Z_\vep(\vr_\vep(s)) -  \bar Z_\vep(s)\| ds\\
 \leq & \f{\bar C^7(T)}{\mdpsc}\int_0^t \lf(\|\bar Z_\vep(s)\|^{p_0'} \| \bar Z_\vep(\vr_\vep(s)) -  \bar Z_\vep(s)\| + \| \bar Z_\vep(\vr_\vep(s)) -  \bar Z_\vep(s)\|^{p_0'+1}\ri) ds,
\end{align*}
where $\bar C^6(T)$ and $\bar C^7(T)$ are appropriate constants.
Now, by Lemma \ref{lem-diff-est}-(iv) and Cauchy-Schwarz inequality,
\begin{align*}
\EE\lf[\sup_{t\leq }|\tilde{\SC{E}}^\vep_{2,2,l}(t)|\ri] \leq & \f{\bar C^7(T)}{\mdpsc} \lf[ (\Delta/\vep)^{1/2}\lf(\int_0^t\EE\|\bar Z_\vep(s)\|^{2p_0'}\ri)^{1/2} \lf(\int_0^t\EE\|\bar Z_\vep(s)\|^{2\alpha}\ri)^{1/2}\ri.\\
&  \hs{2cm}\lf. + (\Delta/\vep)^{(p_0'+1)/2}\int_0^t\EE\|\bar Z_\vep(s)\|^{(p_0'+1)\bar\alpha}ds\ri],
\end{align*}
and since $2p_0' \leq 2\alpha$, $(p_0'+1)\bar\alpha\leq 2\alpha$, we have by Lemma \ref{ctrlproc-bd-2}
\begin{align*}
\EE\lf[\sup_{t\leq T}|\tilde{\SC{E}}^\vep_{2,2,l}(t)|\ri] \leq &\ \f{\bar C^7(T) (\Delta/\vep)^{1/2}}{\mdpsc}\leq \bar C^7(T)(\Delta/\vep)^{(1-\nu)/2} \f{ (\Delta/\vep)^{\nu/2}}{\sqrt \vep}\f{\sqrt \vep}{\mdpsc}  \rt 0,
\end{align*}
as $\vep \rt 0$.
\end{proof}	

\subsection{Proof of Theorem \ref{th:cltZ}} \label{sec:cltZ-pf}
Notice because of Lemma \ref{Z-proc-mmt-bd}, $\Xi^\vep$ is tight and as in proof of Proposition \ref{prop-tight}, any limit point $\Xi$ satisfies
\begin{align*}
	\int_{\R^d\times[0,t]} \SC{L}g(x) \Xi(dx\times ds) = 0,\quad  0\leq t \leq T.
\end{align*}	
Writing $\Xi(dx\times ds) = \Xi_{2|1}(dx|s)ds$ it follows from the uniqueness of the invariant measure $\pi$ that
\begin{align}\label{pi-limpt}
\Xi(dx\times ds) = \pi(dx)ds.
\end{align}	 
Next, under the hypotheses, by Proposition \ref{u-est}, the solution $u$ of the Poisson equation exists and satisfies Condition \ref{u-reg-assum} for some nonnegative $p_1, p_2, p_3, q_1$ and $q_2$.\\
\np 
Now, again using the coarser partition $\{\tilde t_k\}$ with $\tilde\Delta(\vep) = t_k -t_{k-1} =\vep$, similar to \eqref{u-Ito}, we have for the original process $Z^\vep$,
\begin{align}\non
		\sqrt{\vep} \lf(u(t,  Z^\vep(t)) - u(0, x_0)\ri)  = &  \sqrt{\vep}\sum_{k=0}^{k_0} \lf(u(\tilde t_{k+1},  Z^\vep(\tilde t_{k+1})) - u(\tilde t_k,  Z^\vep(\tilde t_{k+1}))\ri) \\ \non
	& + \sqrt\vep \sum_{k=0}^{k_0}\lf(u(\tilde t_k,  Z^\vep(\tilde t_{k+1})) - u(\tilde t_k,  Z^\vep(\tilde t_{k}))\ri)\\ \non
	= &\ \hat{\SC{E}}^\vep_0(t)+\hat{\SC{E}}^\vep_1(t)  - \f{1}{\sqrt \vep } \int_0^t  f(\eta_\vep(s),  Z^\vep(s)) ds \\ \non
	& \hs{0.5cm} + \int_0^t Du(\eta_\vep(s), Z^\vep(s))\s( Z^\vep(\vr_\vep(s)))dW(s)\\  \label{u-Ito-Z}
	=& \ - \vep^{-1/2}\Xi_\vep(f)(t) +  \hat\mart^\vep(t)
 +\hat{\SC{E}}^\vep_0(t)+\hat{\SC{E}}^\vep_1(t)+\hat{\SC{E}}^\vep_2(t) + \hat{\SC{E}}^\vep_3(t), 
\end{align}
where 
\begin{align*}
	\hat{\SC{E}}^\vep_{l,0}(t)   =&\  \f{1}{\sqrt\vep }\lf(\int_{0}^t \lf[\nabla^Tu_l(\eta_\vep(s),  Z^\vep(s))b(Z^\vep(\vr_\vep(s))) ds 
	+ \f{1}{2}  tr\lf(D^2u_l(\eta_\vep(s), Z^\vep(s))a( Z^\vep(\vr_\vep(s)))\ri)\ri]  ds\ri.\\
	&\ -  \lf. \int_0^t \SC{L}u_l(\eta_\vep(s), \cdot)(Z^\vep(s))ds\ri)\\
	\hat{\SC{E}}^\vep_1(t) \doteq &  \sqrt\vep\sum_k\lf(u(\tilde t_{k+1},  Z^\vep(\tilde t_{k+1})) - u(\tilde t_k,  Z^\vep(\tilde t_{k+1}))\ri);\\
	\hat{\SC{E}}^\vep_2(t) \doteq & \f{1}{\sqrt\vep }\lf( \int_0^t  f(s,  Z^\vep(s)) ds -  \int_0^t  f(\eta_\vep(s),  Z^\vep(s)) \ri) ds;\\
	\hat{\SC{E}}^\vep_3(t) \doteq & \int_0^t Du(\eta_\vep(s), Z^\vep(s) \s(Z^\vep(\vr_\vep(s)))  -  Du(s, Z^\vep(s)) \s( Z^\vep(s)) dW(s),\\
	\hat \mart^\vep(t) \doteq& \int_0^t Du(s, Z^\vep(s)) \s( Z^\vep(s)) dW(s).
\end{align*}
Notice that
\begin{align*}
\EE[\sup_{t\leq T}\|\hat{\SC{E}}^\vep_1(t)\|] \leq &\ \sqrt \vep \const_1(T)\sum_{k} \EE(1+\|Z^\vep(\tilde t_{k+1})\|^{q_1}) \modu(\vep)   \\
\leq &\ \f{\modu(\vep)}{\sqrt\vep} \const_1(T) \int_0^T \EE (1+\|Z^\vep(\eta_\vep(s))\|^{q_1}) ds \\
\rt 0,
\end{align*}	
as $\vep \rt 0.$ Also, since $\modu(\vep) \rt 0$, by Lemma \ref{lem-diff-est-Z} and Lemma \ref{Z-proc-mmt-bd}
\begin{align*}
	\EE[\sup_{t\leq T}\|\hat{\SC{E}}^\vep_3(t)\|] \leq &\ \sqrt \const_1(T)\|\s\|_\infty\modu(\vep)\int_0^T\EE(1+\|Z^\vep(s)\|^{q_2}) ds   \\
	&\  +  L_b\const_1(T) \int_0^T \EE (1+\|Z^\vep(s)\|^{p_2})\|Z^\vep(s) - Z^\vep(\vr_\vep(s)))\|^\nu ds \\
	\rt 0,
\end{align*}
as $\vep\rt 0$.

Similarly, it easily follows that $\EE[\sup_{s\leq T}\|\hat{\SC{E}}^\vep_2(s)\|] \rt 0$, and by similar techniques used in the proof of Proposition \ref{prop-tight}, $(\Delta(\vep)/\vep)^{\nu/2}/\sqrt\vep \rt 0$ implies that $\EE[\sup_{s\leq T}\|\hat{\SC{E}}^\vep_0(s)\|] \rt 0$  as $\vep \rt 0$.

Moreover, since Corollary \ref{mmt-bd-Z} holds for any $q$, using Condition \ref{u-reg-assum}, it could be seen that 
$\sqrt \vep \EE\lf[\sup_{t\leq T}|u(t, Z^\vep(t))|\ri]  \rt 0$, as $\vep \rt 0$ (c.f. the proof of \eqref{conv-u}). 

For the martingale term we look at its quadratic variation. By \eqref{pi-limpt} and Lemma \ref{lem-weakconv}, it follows that as $\vep\rt 0$,
\begin{align*}
[\hat\mart^\vep]_t=&\ 	\int_0^t  Du(s, Z^\vep(s)) a(Z^\vep(s)) (Du(s, Z^\vep(s)))^T ds\\
                          =& \ \int_{\R^d\times[0,t]}  Du(s, x) a(x) (Du(s, x))^T \Xi^\vep(dx\times ds)\\
                          \rt&\ \int_{\R^d\times[0,t]}  Du(s, x) a(x) (Du(s, x))^T \pi(dx) ds = \int_0^t M_f(s)ds.
\end{align*}	
The last step used the equivalent expression of $M_f$ (defined in \eqref{M-mat-def}) given in Lemma \ref{M-mat-alt}. The result now follows from the martingale central limit theorem \cite[Chapter 7]{EK86}.

Finally, just as in the last part of the proof of Proposition \ref{prop-tight}, and using the same techniques,
$$\vep^{-1/2} \EE\lf[\sup_{t\leq T}\lf| \int_0^t \lf(  f(\eta_\vep(s),  Z^\vep(s)) - f(\vr_\vep(s),  Z^\vep(\vr_\vep(s))) \ri) ds\ri|\ri] \rt 0,$$ as $\vep \rt 0$, and the assertion for $\Xi^{R}_\vep(f)$ follows.

\subsection{LDP / Laplace principle upper bound - Theorem \ref{th:mdpZ}}\label{sec:LP-ub-pf}
The objective of this section is to prove the Laplace principle upper bound, that is, to show that
\begin{align}\label{LP-ub}
	\limsup_{\vep \rt 0}  \mdpscb \ln \EE \lf[\exp\Big(-F(\Up_\vep(f))/\mdpscb \Big)\ri] \leq -\inf_{\xi \in C([0,T], \R^d)} [I_f(\xi) + F(\xi)].
\end{align}

Note that \eqref{vrep} implies that for every $\vep>0$, there exists a sequence of $\{\psi^\vep\}$ such that 

\begin{align}\label{vrep-lb}
	-\mdpscb \ln \EE \lf[\exp\Big(-F(\Up_\vep(f))/\mdpscb \Big)\ri] \geq \f{1}{2}\EE\lf[ \int_0^T \|\psi^\vep(s)\|^2 ds + F(\bar \Up_\vep(f))\ri] - \vep,
\end{align}
Let  $ \bar R_\vep$ be as in Proposition \ref{prop-tight}. 
Since $F$ is bounded, using a standard localization argument, one can assume without loss of generality that 
\begin{align}\label{rate-bd}
\sup_{0<\vep<1} \int_0^T \|\psi^\vep(s)\|^2 ds \leq M
\end{align}	
for some constant $M>0$. By Proposition \ref{prop-tight}, $(\bar R_\vep, \bar \Up_\vep(f))$ is tight and any limit point $(R,\xi)$ satisfies \eqref{Rset-1} - \eqref{Rset-3}, and hence $(R,\xi) \in \SC{R}_\xi$, where $\SC{R}_\xi$ was introduced in Section \ref{sec:eq-rate} before  \eqref{Rset-1} - \eqref{Rset-3}. Assume, without loss of generality, that $(\bar R_\vep, \bar \Up_\vep(f) ) \RT (R,\xi)$ along the full sequence. Then
it follows from \eqref{vrep-lb} and Fatou's lemma that
\begin{align*}
	\liminf_{\vep \rt 0}-\mdpscb \ln \EE \lf[\exp\Big(-F(\Up_\vep (f))/\mdpscb \Big)\ri] & \geq \f{1}{2} \liminf_{\vep\rt 0} \EE\lf[\int_0^T \|\psi^\vep(s)\|^2 ds + \liminf_{\vep \rt 0}F(\bar \Up_\vep(f))\ri]\\
	& \geq  \f{1}{2} \liminf_{\vep\rt 0} \EE\lf[\int_{\B_T} \|z\|^2 \bar R_\vep(d\y) + F(\xi) \ri]\\
	& =  \EE\lf[\f{1}{2}  \int_{\B_T} \|z\|^2  R (d\y) + F(\xi)\ri]\\
	& \geq I(\xi) +  F(\xi), 
\end{align*}	
which proves \eqref{LP-ub}. Here we used the equivalent form of the rate function given in Lemma \ref{rate-equivalence}.

The proof for the Laplace principle upper bound for $\Xi^{R}_\vep(f)/\mdpsc$ follows by the exact same steps.

\section{LDP / Laplace principle lower  bound - Theorem \ref{th:mdpZ}} \label{sec:LP-lb}
The goal of this section is to prove the Laplace principle lower bound, which is equivalent to proving the LDP lower bound. Specifically, we will show that
\begin{align}\label{LP-lb}
	\liminf_{\vep \rt 0} \mdpscb \ln \EE \lf[\exp\Big(-F(\Up_\vep(f))/\mdpscb \Big)\ri] \geq - \inf_{\xi \in C([0,T], \R^d)} [I_f(\xi) + F(\xi)]
\end{align}	
for a bounded Lipschitz continuous function $F: C([0,T], \R^n) \rt \R$,
Fix $\kappa>0$. Let $\xi$ be such that 
$$I_f(\xi)+F(\xi) \leq \inf_{\xi \in C([0,T], \R^d)} [I_f(\xi) + F(\xi)] +\kappa/2.$$ Recall that by Theorem \ref{rate-equivalence}, $I_f = \bar I_f$. Choose $\phi \in \SC{A}_\xi$ such that
\begin{align*}
\f{1}{2} \int_{\R^d \times [0,T]} \|\phi(x,s)\|^2 \pi(dx) ds + F(\xi) 
                         &\leq I_f(\xi)+F(\xi)+\kappa/2\\
                         & \leq  \inf_{\xi \in C([0,T], \R^d)} [I_f(\xi) + F(\xi)] +\kappa.
\end{align*}	

Using the denseness of $C_c^\infty([0,T],\R^d)$ in $L^2(\pi\times \l_T)$, find $\phi_\kappa \in C_c^\infty([0,T],\R^d)$ such that
$$\|\phi_\kappa - \phi\|_2 \leq \kappa.$$
Define $\xi^\kappa$ by
\begin{align}\label{def-xikappa}
	\xi^\kappa(t) = \int_{\R^d\times [0,t]}  Du(x,s) \s(x) \phi_\kappa(x,s) \pi(dx) ds.
\end{align} 		
Notice that by the boundedness of $\s$, and by  Condition \ref{u-reg-assum}-(ii), there exists a constant $\bar \const_1(T)$ such that
\begin{align}\non
|\xi(t) -  \xi^\kappa(t)| & \leq  \int_{\R^d \times [0,t]}  \|Du(x,s)\|_{op} \|\s(x)\|_{op} \|\phi(x,s) - \phi_\kappa(x,s)\| \pi(dx) ds\\ \non
                                  & \leq \bar \const_1(T) \|\s\|_\infty \int_{\R^d \times [0,t]} (1+\|x\|^{p_2})  \|\phi(x,s) - \phi_\kappa(x,s)\| \pi(dx) ds\\ \non
                                  & \leq \bar \const_1(T)\|\s\|_\infty \lf( \int_{\R^d \times [0,t]} (1+\|x\|^{p_2})^2\pi(dx) ds \int_{\R^d \times [0,t]}  \|\phi(x,s) - \phi_\kappa(x,s)\|^2 \pi(dx) ds\ri)^{1/2}\\ 
                                  \label{xi-diff}
                                   & \leq \Theta_2 \bar \const_1(T) \|\s\|_\infty T^{1/2}\kappa,
\end{align}	
where $\Theta_2 \equiv  \int_{\R^d} (1+\|x\|^{p_2})^2\pi(dx) <\infty.$

Let $\bar Z^\kappa_\vep$ be the solution to the following SDE
\begin{align} \non
	\bar Z^\kappa_\vep(t) =&\ x_0 + \f{1}{\vep}\int_0^t b(\bar Z^\kappa_\vep(\vr_\vep(s)))ds+  \f{1}{\sqrt \vep}\int_0^t\s(\bar Z^\kappa_\vep(\vr_\vep(s))) dW(s) \\ \label{ctrlSDE-lb}
	& \ + \f{\mdpsc }{\vep} \int_0^t \s(\bar Z^\kappa_\vep(\vr_\vep(s)))\phi_\kappa(\bar Z^\kappa_\vep(s), s) ds.
\end{align}	
Since $\phi_\kappa \in C_c^\infty([0,T],\R^d)$ and hence Lipschitz, it readily follows that there exists a unique solution to \eqref{ctrlSDE-lb}. Define $\psi_\vep(s) = \phi_\kappa(\bar Z^\kappa_\vep(s), s)$ and by the variational representation we have
\begin{align}\label{vrep-ub}
	-\mdpscb \ln \EE \lf[\exp\Big(-F(\Up_\vep(f))/\mdpscb \Big)\ri] \leq \EE\lf[\f{1}{2} \int_0^T \|\psi^\vep(s)\|^2 ds + F(\bar \Up^{\kappa}_\vep(f))\ri],
\end{align}		
where $\bar \Up^{\kappa}_\vep(f) = \f{1}{\mdpsc }\int_0^t f(s,\bar Z^\kappa_\vep(s))ds.$
Let $\bar \Xi^\kappa_\vep$, defined by \begin{align*}
\bar \Xi^\kappa_\vep(A\times [0,t]) = \int_0^t 1_{\{\bar Z^\kappa_\vep(s) \in A\}} ds,
\end{align*}	
denote the occupation measure of $\bar Z^\kappa_\vep(s)$ on $\R^d\times [0,T].$
We now study the limit of $\bar \Up^{\kappa}_\vep(f)$. Since
\begin{align*}
	\sup_\vep \EE\lf[\int_{\R^d\times[0,T]} \|x\|^{2\alpha}\bar \Xi^\kappa_\vep(dx\times ds)\ri] = 	\sup_\vep \EE\lf[\int_0^T \|\bar Z^\kappa_\vep(s)\|^{2\alpha}ds \ri] <\infty,
\end{align*}	
$\bar \Xi^\kappa_\vep$ is tight in $\SC{M}_1(\R^d\times [0,T]).$
 Let $\Xi^\kappa$ be a limit point of $\bar \Xi^\kappa_\vep$ and assume without loss of generality that $\bar \Xi^\kappa_\vep \rt \Xi^\kappa$ as $\vep \rt 0$.
Now observe that from \eqref{u-Ito} using $\phi_k$ in place of $\phi$
	\begin{align}\non
		\f{\vep}{\mdpsc }	\lf(u(t, \bar X^\kappa_\vep(t)) - u(0, x_0)\ri)  = & \ - \bar \Up^\kappa_\vep(f)(s) +  \int_{\R^d\times [0,t]} Du(x,s) \s(x) \phi_\kappa(x,s) \bar \Xi^\kappa_\vep(dx\times ds)\\ \non
		 & +\ \f{\sqrt{\vep}}{\mdpsc }\int_0^t  Du(\eta_\vep(s), \bar Z^\kappa_\vep(s))\s(\bar Z^\kappa_\vep(\vr_\vep(s)))dW(s)\\ \label{u-Ito-2}
		 & +\ \SC{E}^\vep_0(t) + \SC{E}^\vep_1(t)  + \SC{E}^\vep_2(t) + \SC{E}^\vep_3(t), 
	\end{align}
where $\SC{E}_j^\vep$ are defined analogously. Thus invoking the same calculations in the proof of Proposition \ref{prop-tight}, $\EE(\sup_{s\leq t}\|\SC{E}^{\vep}_j(s)\|) \rt 0, \ j=0, \hdots, 3$ and
$$\EE\lf[\sup_{r\leq t}|\f{\sqrt{\vep}}{\mdpsc }\int_0^r \nabla^T u_l(\eta_\vep(s), \bar Z^\kappa_\vep(s))\s( \bar Z^\kappa_\vep(\vr_\vep(s)))dW(s)|^2\ri] \rt 0,$$
 as $\vep \rt 0$.	

Since $(x,s) \rt  \s(x) \phi_\kappa(x,s)$ is continuous and bounded and 
Condition \ref{u-reg-assum}-(ii) holds, we have by Lemma \ref{lem-weakconv} (actually by a much easier version)  that as $\vep \rt 0$
$$\int_{\R^d\times [0,t]} Du(x,s)\s(x) \phi_\kappa(x,s) \bar \Xi^\kappa_\vep(dx\times ds) \rt \int_{\R^d\times [0,t]} Du(x,s)\s(x) \phi_\kappa(x,s) \Xi^\kappa(dx\times ds).$$
Consequently, it follows that $\bar \Up^\kappa_\vep(f) \rt \int_{\R^d\times [0,\cdot]}  Du(x,s) \s(x) \phi_\kappa(x,s) \Xi^\kappa(dx\times ds).$ 
Now just as in the proof of Proposition \ref{prop-tight}, much easier calculation shows that for any $g \in C^2_b(\R^d)$, $\int_{\R^d\times [0,t]}\SC{L}g(x)\Xi^\kappa(dx\times ds) = 0$ for all $t \in [0,T].$ Writing 
$\Xi^\kappa(dx\times ds) = \g_s^\kappa(dx)ds$, we have
by the uniqueness of the invariant distribution of $\pi$,  $\Xi^\kappa(dx\times ds) = \pi(dx)ds$. Thus  $\bar \Up^\kappa_\vep(f) \rt \xi^\kappa$, where $\xi^\kappa$ is defined by \eqref{def-xikappa}.

Next we observe that since $(x,s) \rt \phi_\kappa(x,s)$ is continuous and bounded, and $\Xi^\kappa_\vep \RT \Xi^\kappa$, where $\Xi^\kappa(dx\times ds) = \pi(dx)ds$,
$$\int_0^T \phi_\kappa(\bar Z^\kappa_\vep(s),s) ds =\int_{\R^d\times [0,T]} \phi_\kappa(x,s) \Xi^\kappa_\vep(dx\times ds)  \rt \int_{\R^d\times [0,T]} \phi_\kappa(x,s) \pi(dx)ds. $$

 Now taking limits in \eqref{vrep-ub}, we have
\begin{align*}
\limsup_{\vep \rt 0}	- \mdpscb \ln \EE \lf[\exp\Big(-F(\Up_\vep(f))/\mdpscb \Big)\ri] \leq &\ \limsup_{\vep \rt 0} \EE\lf[\f{1}{2} \int_0^T \|\phi_\kappa(\bar Z^\kappa_\vep(s),s)\|^2 ds + F(\bar \Up^{\kappa}_\vep(f))\ri]\\
 = &\ \f{1}{2} \int_0^T \|\phi_\kappa(x,s)\|^2 \pi(dx) ds + F(\xi^\kappa)\\ 
 \leq &\ \ \f{1}{2} \int_0^T \|\phi (x,s)\|^2 \pi(dx) ds + F(\xi) +  \|\phi-\phi_\kappa\|_2^2 \\
  &\ + L^{F}_{lip}\|\xi - \xi_\kappa\|_T\\
 \leq & \ \f{1}{2} \int_0^T \|\phi (x,s)\|^2 \pi(dx) ds + F(\xi) + \kappa^2\\
   & \ + L^{F}_{lip} \Theta_2 \bar \const_1(T) \|\s\|_\infty T^{1/2}\kappa\\
\leq & \  \inf_{\xi \in C([0,T], \R^d)} [I_f(\xi) + F(\xi)] +\kappa+\kappa^2 \\
& \ + L^{F}_{lip} \Theta_2 \bar \const_1(T) \|\s\|_\infty T^{1/2}\kappa.
\end{align*}	
Here $L^F_{lip}$ denotes the Lipschitz constant of $F$, and the fourth step used \eqref{xi-diff}. Sending $\kappa \rt 0$, we have \eqref{LP-lb}. 

Again, the proof for the Laplace principle lower bound for $\Xi^{R}_\vep(f)/\mdpsc$ follows by the exact same steps.

\setcounter{section}{0}
\setcounter{theorem}{0}
\setcounter{equation}{0}
\renewcommand{\theequation}{\thesection.\arabic{equation}}

\appendix
\section*{Appendix}
\renewcommand{\thesection}{A} 

The following version of \cite[Theorem 9.11]{GiTu01} is used in proving Proposition \ref{u-est}. The proof is included simply for completeness.

\begin{lemma}\label{Lemma-D2v}
Let $g\in L^p_{loc}(\R^d,\R)$ 
and $v \in W^{2,p}_{loc}(\R^d,\R)$  a solution to the elliptic equation $\SC{L}v=g$, where the coefficient $a$ is uniformly continuous and satisfies Condition \ref{cond_SDE_inv}-(ii), and $b$ satisfies
 $$\|b(x)\| \leq \cnst (1+\|x\|)^{\bar \alpha}$$
for some constant $\cnst>0$ and exponent $\bar\alpha>0$. Then for any $R>0$ and $0<\theta<1$, there exists a constant $\bar{\mfk c}^0$ depending on $\cnst, \l_1, \l_2, \theta, R, d$ and $p$ such that
\begin{align*}
\|v\|_{W^{2,p}B(y, \theta R)} \leq \bar{\mfk c}^0(\|g\|_{L^p(B(y,R))} + (1+\|y\|^{2\bar\alpha})\|v\|_{L^p(B(y,R))} ),
\end{align*}

\end{lemma}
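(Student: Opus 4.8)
The plan is to reduce the statement to the standard interior $L^p$-estimate \cite[Theorem 9.11]{GiTu01} by a scaling argument that tracks how the constant depends on the point $y$ through the growth of the drift $b$. First I would recall the form of \cite[Theorem 9.11]{GiTu01}: for a fixed bounded domain $\Omega' \subset\subset \Omega$ and an operator $L_0 = a^{ij}\partial_{ij} + b^i\partial_i$ with $a$ uniformly continuous on $\Omega$, ellipticity constants controlled, and $\|b\|_{L^\infty(\Omega)} \le \Lambda$, one has $\|v\|_{W^{2,p}(\Omega')} \le C\big(\|v\|_{L^p(\Omega)} + \|L_0 v\|_{L^p(\Omega)}\big)$ with $C = C(d,p,\lambda,\Lambda,\Omega,\Omega')$. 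Note that $\SC{L}$ in our setting is $\tfrac12 a^{ij}\partial_{ij} + b^i\partial_i$; the factor $\tfrac12$ is harmless since $\lambda_1/2$ and $\lambda_2/2$ still bound the elliptic part, so I will absorb it silently. The key point to extract is the explicit dependence of $C$ on $\Lambda$: following the proof (freezing coefficients, perturbation, covering $\Omega'$ by small balls), $C$ is increasing in $\Lambda$ and in fact one can take $C = C_1(d,p,\lambda,\Omega,\Omega')(1+\Lambda)^{2}$ or some fixed power — the quadratic power is what ultimately produces the $(1+\|y\|^{2\bar\alpha})$ factor.

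Next I would apply this on the two fixed concentric balls $B(0,\theta R) \subset\subset B(0,R)$, but to a translated function. Given $y \in \R^d$, set $w(x) = v(x+y)$ and $\tilde g(x) = g(x+y)$, $\tilde a(x) = a(x+y)$, $\tilde b(x) = b(x+y)$. Then $\tilde{\SC{L}} w = \tilde g$ on $B(0,R)$, where $\tilde{\SC L}$ has the same ellipticity constants $\lambda_1,\lambda_2$ (these are translation-invariant) and the same modulus of continuity for $\tilde a$ as $a$ has globally. The only quantity that changes with $y$ is the sup-norm of the drift: $\|\tilde b\|_{L^\infty(B(0,R))} = \sup_{\|x\|\le R}\|b(x+y)\| \le \cnst(1+\|y\|+R)^{\bar\alpha} \le \cnst'(1 + \|y\|^{\bar\alpha})$ for a constant $\cnst'$ depending on $\cnst, R, \bar\alpha$. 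Plugging $\Lambda = \cnst'(1+\|y\|^{\bar\alpha})$ into the $C = C_1(1+\Lambda)^2$ bound gives
\begin{align*}
\|w\|_{W^{2,p}(B(0,\theta R))} \le C_1\big(1 + \cnst'(1+\|y\|^{\bar\alpha})\big)^2 \big(\|\tilde g\|_{L^p(B(0,R))} + \|w\|_{L^p(B(0,R))}\big),
\end{align*}
and since $(1 + \cnst'(1+\|y\|^{\bar\alpha}))^2 \le \bar{\mfk c}^0(1 + \|y\|^{2\bar\alpha})$ for a suitable constant, translating back ($\|w\|_{W^{2,p}(B(0,\theta R))} = \|v\|_{W^{2,p}(B(y,\theta R))}$, etc.) yields exactly the claimed inequality.

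The main obstacle is making precise the dependence $C \lesssim (1+\Lambda)^2$ of the interior $L^p$-constant on the first-order-coefficient bound, since \cite{GiTu01} states the constant only qualitatively as depending on $\Lambda$ (this is precisely the gap flagged in Remark \ref{Ver-rem}). To handle this I would walk through the standard proof: (i) the constant-coefficient case has a constant depending only on $d,p,\lambda$; (ii) the variable-$a$ case via freezing and a perturbation/partition-of-unity argument on balls of radius $\delta$ chosen from the modulus of continuity of $a$ — this step's constant depends on $d,p,\lambda$ and the modulus of continuity but not on $b$; (iii) the first-order term $b\cdot\nabla v$ is moved to the right-hand side and absorbed using the interpolation inequality $\|\nabla v\|_{L^p} \le \varepsilon \|D^2 v\|_{L^p} + C_\varepsilon \varepsilon^{-1}\|v\|_{L^p}$, where choosing $\varepsilon \sim 1/\Lambda$ to absorb $\Lambda\|\nabla v\|_{L^p}$ introduces a factor $C_\varepsilon/\varepsilon \sim \Lambda^2$ on the $\|v\|_{L^p}$ term. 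This is the only place the power of $\Lambda$ is generated, and $(1+\Lambda)^2$ (hence $(1+\|y\|^{2\bar\alpha})$) is the outcome. I would present this bookkeeping carefully since it is the whole content of the lemma; the covering of $B(0,\theta R)$ by finitely many small balls and the translation step are routine.
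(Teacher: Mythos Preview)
Your proposal is correct and takes essentially the same approach as the paper: both arguments boil down to running the Gilbarg--Trudinger interior $L^p$ estimate while tracking the constant's dependence on the local sup-norm of $b$, and the factor $(1+\|y\|^{2\bar\alpha})$ arises in both cases from choosing the interpolation parameter $\varepsilon \sim (1+\|y\|^{\bar\alpha})^{-1}$ to absorb the $b\cdot\nabla v$ term. The only cosmetic difference is that you translate to the origin and phrase things as ``$C \lesssim (1+\Lambda)^2$'', whereas the paper works directly on $B(y,r)$ with a cutoff and weighted seminorms $\Phi_k = \sup_{0<\theta<1}(1-\theta)^k r^k\|D^k v\|_{L^p(B(y,\theta r))}$; in both cases the small-ball step uses only the uniform continuity of $a$, and the general $R$ follows by a finite covering.
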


\begin{proof}
Fix $y>0$.  Then there exist  constants $\bar{\mfk c}_1$ and $r_0$  depending on $d,p$ and $\l_1$ (given in \ref{cond_SDE_inv}-(ii))  
such that for any function $\tilde v \in C^2_0(B(y,r)),$ with $r\leq r_0$
$$\|D^2\tilde v\|_p \leq \bar{\mfk c}_1 \|\tilde{\SC{L}}\tilde v\|_p,$$
where $\tilde{\SC{L}} = \sum_{ij}a_{ij}\partial^2_{ij}.$

For $0<\theta<1$, let $\theta'=(1+\theta)/2,$ and let $\eta \in C^2_0(B(y,\theta' r))$ be  a cutoff function such that
\begin{align*}
0\leq \eta \leq 1,\quad \eta\Big|_{B(y,\theta r)}\equiv 1,  \quad |\nabla \eta| \leq \f{2}{(1-\theta')r}, \quad |D^2 \eta| \leq \f{4}{(1-\theta')^2r^2}. 
\end{align*}
Putting $\tilde v = \eta v$ note that 
\begin{align*}
\partial^2_{ij}\tilde v 
 = \partial_{i}\eta\partial_{j}v+\eta \partial^2_{ij}v+ \partial_{i}v\partial_{j}\eta+ v \partial^2_{ij}\eta.
\end{align*}
Thus
\begin{align*}
\|D^2v\|_{L^p(B(y,\theta r))} = &\  \|D^2\tilde v\|_{L^p(B(y, \theta r))} \leq  \bar{\mfk c}_1\|\tilde{L}\tilde v\|_{L^p(B(y,\theta' r))}\\
\leq &  \bar{\mfk c}_1\ \big\|\eta\tilde{L}v + \sum_{ij} a_{ij} \partial_i v \partial_i \eta+ v\tilde{L}\eta\big\|_{L^p(B(y,\theta' r))}\\
=  &  \bar{\mfk c}_1\ \big\|\eta(-g+b^T\nabla v) + \sum_{ij} a_{ij} \partial_i v\partial_i \eta+ v\tilde{L}\eta\big\|_{L^p(B(y,\theta' r))}\\
\leq & \  \bar{\mfk c}_1\lf(\|g\|_{L^p(B(y,\theta' r))} +\Big\|\|b\|\|\nabla v\|\Big\|_{L^p(B(y,\theta' r))} + \l_2\Big\|\|\nabla v\|\|\nabla \eta\|\Big\|_{L^p(B(y,\theta' r))}\ri. \\
& \ \lf.\hs{1cm}+ \|a\|_\infty \Big\||v|\|D^2\eta\|\Big\|_{L^p(B(y,\theta' r))}\ri) \\
\leq&\ \bar{\mfk c}_2\Big(\|g\|_{L^p(B(y,\theta' r))}+ (1+\|y\|^{\bar\alpha})\|\nabla v\|_{L^p(B(y,\theta' r))}+ \|\nabla v\|_{L^p(B(y,\theta' r))}/(1-\theta')r\\
&\ \hs{1cm} + \|v\|_{L^p(B(y,\theta' r ))}/(1-\theta')^2r^2 \Big). \tag{\dag}
\end{align*}
Following \cite{GiTu01}, we define the weighted seminorms for $k=0,1,2$
\begin{align*}
\Phi_k = \sup_{0<\theta<1} (1 - \theta)^kr^k\|D^kv\|_{L^p(B(y,\theta r))}.
\end{align*}
Multiplying (\dag) by $(1-\theta)^2r^2$ and observing that $1 - \theta' = (1-\theta)/2$, we have
\begin{align}\label{deriv2-ineq}
\Phi_2 \leq \bar{\mfk c}_3\lf(\|g\|_{L^p(B(y,r))} + (1+\|y\|^{\bar\alpha})\Phi_1 + \Phi_1+ \Phi_0\ri),
\end{align}
for some constant $\bar{\mfk c}_3$. Using Sobolev's interpolation inequality (\cite[Theorem 7.28]{GiTu01})
it could easily be seen that the $\Phi_k$ satsify the following interpolation inequality ( see (9.39) in \cite{GiTu01})
\begin{align*}
\Phi_1 \leq \ep\Phi_2 + \mfk C\Phi_0/\ep
\end{align*}
for any $\ep>0$, where the constant $\mfk C$ depends only on $d$. Using this inequality twice for the two $\Phi_1$ terms in \eqref{deriv2-ineq}, once with $\ep = (4\bar{\mfk c}_3 (1+\|y\|^{\bar\alpha}))^{-1}$, and the second with $\ep =(4\bar{\mfk c}_3)^{-1}$, we have
\begin{align*}
\Phi_2 \leq &\ \bar{\mfk c}_5 \Big(\|g\|_{L^p(B(y,r))}   + (1+\|y\|^{2\bar\alpha}) \Phi_0 \Big) + \f{1}{2}\Phi_2
\end{align*}
for some constant $\bar{\mfk c}_5$, whence it follows for some constant $\bar{\mfk c}_6$
\begin{align*}
	\|v\|_{W^{2,p}B(y,\theta r)} \leq \bar{\mfk c}_6\lf(\|g\|_{L^p(B(y,r))} + (1+\|y\|^{2\bar\alpha})\|v\|_{L^p(B(y,r))} \ri).
\end{align*}
The case for general $R>0$ now easily follows by covering the ball $B(y,R)$ with finite number of balls of radius $r_0$.
\end{proof}

\begin{remark}\label{rem:deriv-v-bd}{\rm
In particular, it follows  that if $|g(x)| = O(\|x\|^{p_0}),$ $|v(x)| = O(\|x\|^{p_1})$, then for some constant $\bar{\mfk c}^1$
\begin{align*}
	\|v\|_{W^{2,p}B(y, \theta r)} \leq \bar{\mfk c}^0_1(1+\|y\|^{p_2}),
\end{align*}
where $p_2 = \max\{p_0, p_1+2\bar \alpha\}.$
Next choose $p>d$. Then by Sobolev's embedding theorem, there exists a constant $\bar{\mfk c}_1' \equiv \bar{\mfk c}_1'(\theta r)$  such that
\begin{align*}
\|\nabla v(y)\| \leq \bar{\mfk c}_1' \|v\|_{W^{2,p}B(y,\theta r)} \leq  \bar{\mfk c}_1' \bar{\mfk c}^0_1(1+\|y\|^{p_2}).
\end{align*} 
}
\end{remark}

We now state the result on pointwise bounds of $\|D^2v(\cdot)\|.$ 

\begin{lemma}\label{Lemma-D2v-pt}
Assume the setup and hypothesis of Lemma \ref{Lemma-D2v}. Furthermore, suppose that that the coefficients of $\SC{L}$ are in $C^1(\R^d)$, and that for each $k$, $\|a^{(k)}\|_\infty < \infty$ and $\|b^{(k)}(x)\| \leq \cnst (1+\|x\|^{\bar \alpha})$ for some constant $\cnst$ and some exponent $\bar \alpha'$, where 
$$a^{(k)}_{ij}(x) = \partial_k a_{ij}(x), \qquad b^{(k)}_i(x)  = \partial_{k} b_i(x).$$ 
Also, as in Remark \ref{rem:deriv-v-bd}, assume that  $|g(x)| = O(\|x\|^{p_0}), \|\nabla g(x)\| = O(\|x\|^{p_0}),$ and $|v(x)| = O(\|x\|^{p_1})$ for some exponents $p_0$ and $p_1$.
Then for some constant $\bar{\mfk c}^2$
\begin{align*}
\|D^2v(y)\| \leq &\  \bar{\mfk c}^2(1+ \|y\|)^{p_3}
\end{align*}
where $p_3 =\max \{p_0+2\bar\alpha, p_1+4\bar \alpha\}.$
 \end{lemma}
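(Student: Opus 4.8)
\textbf{Proof plan for Lemma \ref{Lemma-D2v-pt}.}

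The strategy is to differentiate the elliptic equation $\SC{L}v = g$ once with respect to a coordinate direction $x_k$, thereby obtaining an elliptic equation for the first derivative $w \doteq \partial_k v$, and then to apply the interior $W^{2,p}$-estimate of Lemma \ref{Lemma-D2v} (together with Sobolev embedding as in Remark \ref{rem:deriv-v-bd}) to $w$. First I would write out, formally, $\partial_k(\SC{L}v)(x) = \partial_k g(x)$. Expanding the left-hand side using the product rule, we get
\begin{align*}
\SC{L}w(x) = \partial_k g(x) - \sum_i b_i^{(k)}(x)\partial_i v(x) - \f{1}{2}\sum_{ij}a_{ij}^{(k)}(x)\partial^2_{ij}v(x) \doteq \tilde g_k(x),
\end{align*}
so that $w=\partial_k v$ solves $\SC{L}w = \tilde g_k$ with the \emph{same} operator $\SC{L}$ whose coefficients satisfy the hypotheses of Lemma \ref{Lemma-D2v}. (Strictly, since we only know $v\in W^{2,p}_{loc}$ a priori, this differentiation should be justified by a standard difference-quotient / mollification argument; once the $C^1$ smoothness of the coefficients and the growth bound on $\nabla g$ are in force, Theorem 9.19 of \cite{GiTu01} already gives $v\in W^{3,p}_{loc}$, as noted in the discussion preceding Condition \ref{u-reg-assum}, so $w\in W^{2,p}_{loc}$ and the computation is legitimate.)

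The next step is to bound the growth of the new right-hand side $\tilde g_k$. Using the hypotheses $\|\nabla g(x)\| = O(\|x\|^{p_0})$, $\|b^{(k)}(x)\|\le \cnst(1+\|x\|^{\bar\alpha})$, $\|a^{(k)}\|_\infty<\infty$, together with the gradient bound $\|\nabla v(y)\| \le \bar{\mfk c}(1+\|y\|^{p_2})$ and the Hessian bound $\|v\|_{W^{2,p}B(y,\theta r)}\le \bar{\mfk c}(1+\|y\|^{p_2})$ from Remark \ref{rem:deriv-v-bd} (with $p_2=\max\{p_0,p_1+2\bar\alpha\}$), we obtain locally, on a unit ball around $y$,
\begin{align*}
\|\tilde g_k\|_{L^p(B(y,1))} \le \mfk C\bigl(1+\|y\|^{p_0} + (1+\|y\|^{\bar\alpha})(1+\|y\|^{p_2}) + (1+\|y\|^{p_2})\bigr) \le \mfk C'(1+\|y\|)^{\max\{p_0,\,p_2+\bar\alpha\}}.
\end{align*}
Also $\|w\|_{L^p(B(y,1))} = \|\partial_k v\|_{L^p(B(y,1))} = O((1+\|y\|)^{p_2})$ again by Remark \ref{rem:deriv-v-bd}. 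Now I apply Lemma \ref{Lemma-D2v} to $w$ in place of $v$: for $p>d$,
\begin{align*}
\|w\|_{W^{2,p}B(y,\theta)} \le \bar{\mfk c}^0\bigl(\|\tilde g_k\|_{L^p(B(y,1))} + (1+\|y\|^{2\bar\alpha})\|w\|_{L^p(B(y,1))}\bigr) \le \mfk C''(1+\|y\|)^{p_3},
\end{align*}
where $p_3 = \max\{p_0,\,p_2+\bar\alpha,\,p_2+2\bar\alpha\} = \max\{p_0,\,p_2+2\bar\alpha\}$. Substituting $p_2 = \max\{p_0,p_1+2\bar\alpha\}$ gives $p_3 = \max\{p_0,\,p_0+2\bar\alpha,\,p_1+4\bar\alpha\} = \max\{p_0+2\bar\alpha,\,p_1+4\bar\alpha\}$, exactly the claimed exponent. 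Finally, by Sobolev embedding ($W^{2,p}\hookrightarrow C^1$ for $p>d$, so in particular $W^{2,p}$ controls the function and its first derivatives, and the first derivatives of $w$ are the second derivatives of $v$) one concludes $\|D^2 v(y)\| \le \mfk C'''(1+\|y\|)^{p_3}$, as desired.

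The main obstacle I anticipate is bookkeeping of the exponents rather than any deep analytic difficulty: one must be careful that $\bar\alpha$ (the growth exponent of $b$ and its derivatives) enters \emph{twice} — once from multiplying $\nabla v$ by $b^{(k)}$ in forming $\tilde g_k$, and once from the factor $(1+\|y\|^{2\bar\alpha})$ in the conclusion of Lemma \ref{Lemma-D2v} applied to $w$ — so that the net worst term is $p_1+2\bar\alpha+2\bar\alpha = p_1+4\bar\alpha$. A secondary point requiring a line of care is the rigorous justification of differentiating the equation (the difference-quotient argument), and making sure the constants from Lemma \ref{Lemma-D2v} and from Sobolev embedding on balls of a fixed radius do not depend on $y$, which they do not since the PDE coefficients have the stated uniform (in the sense of the growth conditions) behaviour and we work on balls of radius $1$.
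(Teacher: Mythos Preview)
Your proposal is correct and follows essentially the same approach as the paper: differentiate $\SC{L}v=g$ in $x_k$ to obtain $\SC{L}(\partial_k v)=\tilde g_k$ with $\tilde g_k = \partial_k g - b^{(k)}\cdot\nabla v - \tfrac{1}{2}\mathrm{tr}(a^{(k)}D^2v)$, apply Lemma \ref{Lemma-D2v} to $\partial_k v$, and finish with Sobolev embedding for $p>d$. The only cosmetic difference is that the paper substitutes the estimate of Lemma \ref{Lemma-D2v} for $\|v\|_{W^{2,p}}$ directly into the bound (arriving at $\|v\|_{W^{3,p}B(y,\theta r)}\lesssim \|\nabla g\|_{L^p}+(1+\|y\|^{2\bar\alpha})\|g\|_{L^p}+(1+\|y\|^{4\bar\alpha})\|v\|_{L^p}$) rather than routing through the intermediate exponent $p_2$ as you do; your exponent bookkeeping is correct and yields the same $p_3=\max\{p_0+2\bar\alpha,\,p_1+4\bar\alpha\}$.
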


\begin{proof}

First notice that $v^{(k)} = \partial_k v$ satisfies 
\begin{align*}
\SC{L}v^{(k)} = \tilde g_k,
\end{align*}
where $\tilde g_k = g^{(k)} -  b^{(k)} \cdot \nabla v - \f{1}{2}tr(a^{(k)}D^2v).$ It now follows from Lemma \ref{Lemma-D2v} that for each $k=1,2,\hdots,d$, 
\begin{align*}
\|v^{(k)}\|_{W^{2,p}B(y,\theta r)}   \leq & \ \leq \bar{\mfk c}_7\lf(\|\tilde g_k\|_{L^p(B(y,r))} + (1+\|y\|^{2\bar\alpha})\|v^{(k)}\|_{L^p(B(y,r))} \ri)\\
   & \leq \ \bar{\mfk c}_8 \Big(\|g^{(k)}\||_{L^p(B(y,r))}+ (1+\|y\|^{\bar\alpha}) \|\nabla v\|_{L^p(B(y,r))} + \|D^2v\|_{L^p(B(y,R))} \\
   & \ \hs*{1cm}  + (1+\|y\|^{2\bar\alpha}) \|\nabla v\|_{L^p(B(y,r))}  \Big).
\end{align*}
Thus, 
\begin{align*}
\|v\|_{W^{3,p}B(y,\theta r)}  \leq & \ \bar{\mfk c}_{9}\ \lf(\|\nabla g\|_{L^p(B(y,r))} + (1+\|y\|^{2\bar\alpha})\| v\|_{W^{2,p}(B(y,r))}\ri)\\
\leq & \bar{\mfk c}_{10}\ (\|\nabla g\|_{L^p(B(y,r))}+(1+\|y\|^{2\bar\alpha})\|g\|_{L^p(B(y,r))}+(1+\|y\|^{4\bar\alpha})\|v\|_{L^p(B(y,r))}).
\end{align*}
 The desired pointwise bound now again follows from Sobolev's embedding theorem (by choosing $p>d$), and the assumption on $g$ and $\nabla g$. 

\end{proof}

\begin{lemma}\label{lem-weakconv}
Let $E_1$ and $E_2$ be  separable Banach spaces and
$h: E_1\times E_2 \rt \R^d$  a continuous function such that
$\|h(x,z)\| \leq B_h(1+\|x\|^\beta_1)(1+\|z\|^\rho_2)$ for some constants $B_h\geq 0, \beta\geq 0, \rho\geq 0.$
Let $\mu_n$ be a sequence of $\SC{P}(E_1\times E_2)$-valued random variables  and $\mu_n \rt \mu$ a.s as $n\rt \infty$.
Suppose that for some $\alpha>\beta$
\begin{align}\label{assum-1}
\sup_n\EE \int_{E_1\times E_2} (\|x\|^{2\alpha} +\|z\|^{2\rho}) \mu_n(dx\times dz) <\infty.
\end{align}
Then as $n\rt \infty$
\begin{align*}
\EE\lf|	\int_{E_1\times E_2} h(x,z) \mu_n(dx\times dz) - 	\int_{E_1\times E_2} h(x,z) \mu(dx\times dz)\ri| \rt 0.
\end{align*}	
\end{lemma}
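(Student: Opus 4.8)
The plan is to reduce the statement to a standard uniform-integrability argument combined with the almost-sure weak convergence $\mu_n \rt \mu$. First I would fix $\omega$ in the almost-sure event where $\mu_n(\omega) \rt \mu(\omega)$ weakly in $\SC{P}(E_1\times E_2)$, and also where the sequence $\int (\|x\|^{2\alpha}+\|z\|^{2\rho})\mu_n(dx\times dz)$ is finite (this holds a.s.\ because of the uniform bound \eqref{assum-1} and an application of Fatou to the liminf; actually one needs a little care, see below). For such $\omega$, the functions $(x,z)\mapsto \|x\|^{2\alpha}+\|z\|^{2\rho}$ serve as a tightness/uniform-integrability control: by \eqref{assum-1} (really its a.s.\ consequence) the measures $\mu_n$ are tight, and moreover the mass they put outside a large ball $\{\|x\|\leq K, \|z\|\leq K\}$ is $O(K^{-(2\alpha\wedge 2\rho)})$ uniformly in $n$. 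Since $\|h(x,z)\|\leq B_h(1+\|x\|^\beta)(1+\|z\|^\rho)$ with $\beta<\alpha$, the contribution to $\int h\,d\mu_n$ from the complement of the ball is controlled: splitting $\int h(1-\chi_K) \,d\mu_n$ by Cauchy--Schwarz (or H\"older with a carefully chosen exponent using $2\alpha>2\beta$ and the $2\rho$-moment bound on $z$), this term is $o(1)$ as $K\rt\infty$, uniformly in $n$. On the ball, $h$ is bounded and continuous, so a standard truncation argument — multiply $h$ by a continuous compactly supported cutoff $\chi_K$ equal to $1$ on the ball of radius $K$ — gives $\int h\chi_K \,d\mu_n \rt \int h\chi_K\,d\mu$ by the definition of weak convergence. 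Combining the two pieces yields $\int h\,d\mu_n(\omega)\rt \int h\,d\mu(\omega)$ for a.e.\ $\omega$.

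To upgrade this pathwise convergence to $L^1$-convergence of the expectations (which is what the statement asserts), I would invoke a second uniform-integrability argument, this time at the level of the random variables $Y_n \doteq \int h(x,z)\,\mu_n(dx\times dz)$. By \eqref{assum-1} and H\"older's inequality (using $2\alpha/\beta>2$ on the $x$-variable and the $2\rho$-moment on the $z$-variable, together with the product structure of the bound on $h$), one gets
\begin{align*}
\sup_n \EE\|Y_n\|^{r} <\infty
\end{align*}
for some $r>1$ — concretely $r$ can be taken slightly bigger than $1$, chosen so that $r\beta<\alpha$ and $r\rho\le$ the available moment. This bounded-$r$-th-moment condition implies $\{Y_n\}$ is uniformly integrable, and combined with the a.s.\ convergence established above, Vitali's convergence theorem gives $\EE\|Y_n - Y_\infty\|\rt 0$, which is exactly the claim. (One also checks $\EE\|Y_\infty\|<\infty$ the same way, via Fatou.)

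The step I expect to be the main obstacle is making the ``for a.e.\ $\omega$, the sequence of scalars $\int(\|x\|^{2\alpha}+\|z\|^{2\rho})\mu_n(dx\times dz)$ is uniformly bounded'' statement precise and usable: \eqref{assum-1} only gives a bound on the \emph{expectation} of these integrals, not an almost-sure uniform bound over $n$. The clean way around this is to avoid asking for a pathwise uniform bound at all: one shows instead that for every $\delta>0$ there is $K$ with $\sup_n \EE\int_{\{\|x\|>K\}\cup\{\|z\|>K\}} \|h\|\,d\mu_n <\delta$ (this follows directly from \eqref{assum-1} and the growth bound on $h$ via H\"older, with no exceptional null set), and then runs the truncation argument in expectation: $\EE|Y_n - \int h\chi_K d\mu_n|<\delta$ uniformly, $\EE|\int h\chi_K d\mu_n - \int h\chi_K d\mu|\rt 0$ because $h\chi_K$ is bounded continuous and $\mu_n\rt\mu$ a.s.\ so bounded convergence applies, and $\EE|\int h\chi_K d\mu - Y_\infty|<\delta$ by Fatou. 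Letting $n\rt\infty$ and then $\delta\rt 0$ finishes the proof; this packaging sidesteps the pathwise-uniformity issue entirely and is the route I would actually write up.
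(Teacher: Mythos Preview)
Your final route---truncation in expectation, bounded convergence on the truncated piece, tail control via the moment hypothesis---is exactly the argument in the paper, so the proposal is correct. The only packaging difference is that the paper uses \emph{two} separate cut-off levels (a level $N$ for $x$ and a level $M$ for $z$), bounds the $x$-tail by $O(N^{-(\alpha-\beta)})$ using only the elementary inequality $ab\le (a^2+b^2)/2$, then on $\{\|x\|\le N+1\}$ bounds the $z$-tail by $O((1+N^\beta)/M^\rho)$, and finally lets $M\to\infty$ first and $N\to\infty$ second. Your single-parameter version works too, but---as you correctly anticipate---it requires H\"older on $\mu_n$ with an exponent $p\in(2,2\alpha/\beta]$ (not Cauchy--Schwarz) to handle the piece $\int_{\{\|z\|>K\}}\|x\|^\beta\|z\|^\rho\,d\mu_n$: with $q=p/(p-1)<2$ one gets $\int_{\{\|z\|>K\}}\|z\|^{\rho q}\le K^{\rho q-2\rho}\int\|z\|^{2\rho}$, and then another H\"older on $\EE$ closes the estimate. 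The paper's two-level scheme avoids this by making the $x$-factor a constant before touching the $z$-tail, so only Cauchy--Schwarz is needed; your scheme is more compact but leans on the strict gap $\alpha>\beta$ a second time. Either way the proof goes through.
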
	

\begin{proof} First observe that by Fatou's lemma,
	$$ \EE\int_{E_1\times E_2} (\|x\|^{2\alpha} +\|z\|^{2\rho}) \mu(dx\times dz) <\infty.$$
	The assertion is easier if $\beta = 0$ or $\rho=0$, and therefore, we assume that both $\beta>0$ and $\rho>0$.
Let $e^1_N : E_1 \rt [0,1]$ be a continuous and bounded function on $\R$ such that $e^1_N(x) = 1$ if $\|x\|_1 \leq N$ and $e^1_N(x) = 0$ if $\|x\|_1 > N+1$. Similarly, $e^2_M : E_2 \rt [0,1]$ be a continuous and bounded function on $\R$ such that $e^2_M(z) = 1$ if $\|z\|_2 \leq M$ and $e^2_M(z) = 0$ if $\|z\|_1 > M+1$.
Let 
$$D_n = \|\int_{E_1\times E_2} h(x,z) \mu_n(dx\times dz)-	\int_{E_1\times E_2} h(x,z) \mu(dx\times dz)\|.$$
Clearly,
\begin{align*}
	D_n \leq &  \|\int_{E_1\times E_2} h(x,z)e^1_N(x)e^2_M(z) \mu_n(dx\times dz)-	\int_{E_1\times E_2} h(x,z)e^1_N(x)e^2_M(z) \mu(dx\times dz)\|\\
	& +  \|\int_{E_1\times E_2} h(x,z)(1-e^1_N(x)) \mu_n(dx\times dz)\|+ \|\int_{E_1\times E_2} h(x,z)(1-e^1_N(x)) \mu(dx\times dz)\|\\
	&+\|\int_{E_1\times E_2} h(x,z)e^1_N(x)(1-e^2_M(z)) \mu_n(dx\times dz)\|+ \|\int_{E_1\times E_2} h(x,z)e^1_N(x)(1-e^2_M(z)) \mu(dx\times dz)\|\\
	& \equiv D_n^1 + D_n^2 +D_2+D_n^3+D_3.
\end{align*}
Now observe that
\begin{align*}
 D^2_n \equiv&  \|\int_{E_1\times E_2} h(x,z)(1-e^1_N(x)) \mu_n(dx\times dz)\|\\
        \leq & B_h \int_{E_1\times E_2} (1+\|x\|^\beta_1)(1+\|z\|^\rho_2)1_{\{\|x\|_1 \geq N\}}\mu_n(dx\times dz)\\
      \leq &   2B_h \int_{E_1\times E_2} \|x\|^\beta_1(1+\|z\|^\rho_2)1_{\{\|x\|_1 \geq N\}}\mu_n(dx\times dz)\\
       \leq &   \f{2B_h}{N^{\alpha-\beta}} \int_{E_1\times E_2} \|x\|^\alpha_1(1+\|z\|^\rho_2)1_{\{\|x\|_1 \geq N\}}\mu_n(dx\times dz)\\
       \leq &  \f{B_h}{N^{\alpha-\beta}} \int_{E_1\times E_2} \lf(\|x\|^{2\alpha}_1+(1+\|z\|^\rho_2)^2\ri)\mu_n(dx\times dz).
\end{align*}
Therefore,
\begin{align}\label{eq:Dn1}
	\sup_n\EE(D^2_n) \leq & B_h\Theta_1/N^{\alpha-\beta},
\end{align}	
where $\Theta_1 \equiv \sup_n\lf( \int_{E_1\times E_2}\lf( \|x\|^{2\alpha}_1+ (1+\|z\|^\rho_2)^2\ri)\mu_n(dx\times dz)\ri) < \infty$ by the assumption in \eqref{assum-1}. Similarly,
\begin{align*}
	\EE(D_2)\equiv\EE\|\int_{E_1\times E_2} h(x,z)(1-e^1_N(x)) \mu(dx\times dz)\| \leq & B_h\Theta_1/N^{\alpha-\beta}.
\end{align*}	
Next, 
\begin{align*}
	D^3_n \equiv & \|\int_{E_1\times E_2} h(x,z)e^1_N(x)(1-e^1_M(z)) \mu_n(dx\times dz)\|\\
	\leq & B_h \int_{E_1\times E_2} (1+\|x\|^\beta_1)(1+\|z\|^\rho_2)1_{\{\|x\|_1 \leq N+1\}}1_{\{\|z\|_2 \geq M\}}\mu_n(dx\times dz)\\
	\leq & B_h(1+(N+1)^\beta)  \int_{E_1\times E_2} (1+\|z\|^\rho_2)1_{\{\|z\|_2 \geq M\}}\mu_n(dx\times dz)\\
	\leq & \f{B_h(1+(N+1)^\beta)}{1+M^\rho}  \int_{E_1\times E_2} (1+\|z\|^\rho_2)^2 \mu_n(dx\times dz).
\end{align*}	
Thus,
\begin{align*}
	\sup_n \EE(D^3_n) \leq  \f{B_h(1+(N+1)^\beta) \Theta_2}{1+M^\rho}, 
\end{align*}	 
where $\Theta_2\equiv  \sup_n \EE\int_{E_1\times E_2} (1+\|z\|^\rho_2)^2 \mu_n(dx\times dz) <\infty$ by  \eqref{assum-1}. Similarly,
\begin{align*}
\EE(D^3) \equiv\EE	\|\int_{E_1\times E_2} h(x,z)e^1_N(x)(1-e^1_M(z)) \mu(dx\times dz)\| \leq &  \f{B_h(1+(N+1)^\beta) \Theta_2}{1+M^\rho}.
\end{align*}
Finally,  since $\mu_n \Rt \mu$ and $(x,z) \rt  h(x,z)e^1_N(x)e^2_M(z)$ is a continuous and bounded function,  
$$ D^1_n\equiv \|\int_{E_1\times E_2} h(x,z)e^1_N(x)e^2_M(z) \mu_n(dx\times dz)-	\int_{E_1\times E_2} h(x,z)e^1_N(x)e^2_M(z) \mu(dx\times dz)\| \rt 0, \quad a.s$$
as $n\rt \infty.$ Next since $\|\int_{E_1\times E_2} h(x,z)e^1_N(x)e^2_M(z) \mu_n(dx\times dz)\| \leq B_h(1+(N+1)^\beta)(1+(M+1)^\rho)$, it follows by the dominated convergence theorem that $\EE(D^1_n)\rt 0$ as $n\rt \infty.$ Putting things together, we have
$$\EE(D_n) \leq E(D_n^1)+2B_h\Theta_1/N^{\alpha-\beta}+2B_h(1+(N+1)^\beta) \Theta_2/(1+M^\rho).$$
The assertion now follows by first letting $n\rt \infty$, then $M\rt \infty$ and finally, $N\rt \infty$.

\end{proof}	

\begin{lemma}\label{quad-max}
Let $\SC{L}^2_d\equiv \SC{L}^2(\Om, \R^d)$ denote the space of square integrable $\R^d$-valued random variables on a probability space $(\Om, \SC{F}, \PP)$, and $H$ an $n\times d$ random matrix. Assume that $M= \EE(HH^T)$ is invertible. Then for any $b \in \R^n$,
\begin{align*}
	\min\{\EE\|Y\|^2: Y \in \SC{L}^2_d,\ \EE(HY)=b \} = b^TM^{-1}b,
\end{align*}

\begin{proof}
Let $Y$ be such that $E(HY)=b$. Then notice that
\begin{align*}
0\leq & \ \EE\|Y - H^TM^{-1}b\|^2 = \EE\|Y\|^2 - 2\EE\lf(\<Y, H^TM^{-1}b\>\ri)+ \EE\|H^TM^{-1}b\|^2\\
=&\ \EE\|Y\|^2 - 2\EE \lf(\<HY, M^{-1}b\>\ri) + \EE(b^TM^{-1}(HH^T)M^{-1}b)\\
= &\ \EE\|Y\|^2 - 2 \<\EE(HY), M^{-1}b\> + b^TM^{-1}\EE(HH^T)M^{-1}b\\
= & \EE\|Y\|^2 - b^TM^{-1}b,
\end{align*}
which proves that $ \EE\|Y\|^2 \geq b^TM^{-1}b$. Finally, observe that equality holds for $Y = H^TM^{-1}b$.
	
\end{proof}	

\end{lemma}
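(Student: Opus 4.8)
\textbf{Proof proposal for Lemma \ref{quad-max}.}

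The plan is to prove the identity by a completion-of-squares argument, showing that $Y = H^T M^{-1} b$ is the minimizer and that its objective value is exactly $b^T M^{-1} b$, and that no feasible $Y$ can do better. First I would fix an arbitrary feasible $Y \in \SC{L}^2_d$, i.e. one satisfying the constraint $\EE(HY) = b$, and expand the nonnegative quantity $\EE\|Y - H^T M^{-1} b\|^2$. Multiplying out the square gives three terms: $\EE\|Y\|^2$, a cross term $-2\EE(\langle Y, H^T M^{-1} b\rangle)$, and $\EE\|H^T M^{-1} b\|^2$. The key algebraic observations are that $\EE(\langle Y, H^T M^{-1} b\rangle) = \EE(\langle HY, M^{-1} b\rangle) = \langle \EE(HY), M^{-1} b\rangle = \langle b, M^{-1} b\rangle = b^T M^{-1} b$ (using linearity of expectation, that $M^{-1} b$ is deterministic, and the constraint), and that $\EE\|H^T M^{-1} b\|^2 = \EE(b^T M^{-1} H H^T M^{-1} b) = b^T M^{-1} \EE(HH^T) M^{-1} b = b^T M^{-1} M M^{-1} b = b^T M^{-1} b$. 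Substituting, the whole expression collapses to $\EE\|Y\|^2 - b^T M^{-1} b \geq 0$, which gives the lower bound $\EE\|Y\|^2 \geq b^T M^{-1} b$ for every feasible $Y$.

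For the matching upper bound (i.e. that the infimum is attained and equals $b^T M^{-1} b$), I would simply verify that the candidate $Y^* = H^T M^{-1} b$ is feasible and achieves equality: feasibility is $\EE(HY^*) = \EE(HH^T) M^{-1} b = M M^{-1} b = b$, and its objective value is $\EE\|Y^*\|^2 = b^T M^{-1} \EE(HH^T) M^{-1} b = b^T M^{-1} b$, exactly the lower bound. Hence the minimum is attained at $Y^*$ and equals $b^T M^{-1} b$; note that since the right-hand side of the completion-of-squares identity forces $\EE\|Y - H^T M^{-1} b\|^2 = 0$ at optimality, the minimizer is in fact $\PP$-a.s.\ unique, though the statement only asks for the value.

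There is essentially no serious obstacle here; the only points requiring a moment's care are (i) that all the expectations appearing are finite — $\EE\|Y\|^2 < \infty$ by assumption on $\SC{L}^2_d$, and the cross and constant terms are finite because $M^{-1} b$ is a fixed vector and $\EE(HH^T) = M$ is assumed finite (invertibility presupposes the entries of $M$ are well-defined), and (ii) that the interchange of expectation with the inner product against the deterministic vector $M^{-1} b$ is just linearity, requiring no convergence theorem. The invertibility of $M$ is used only to make $M^{-1}$ meaningful; the inequality direction would still hold (with $M^{-1}$ replaced by a pseudo-inverse) without it, but since the lemma assumes invertibility there is nothing further to check. I would present this as the short direct computation already sketched, with one line justifying finiteness of the terms.
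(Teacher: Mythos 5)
Your proposal is correct and follows essentially the same completion-of-squares argument as the paper: expand $\EE\|Y-H^TM^{-1}b\|^2\geq 0$, use linearity of expectation and the constraint $\EE(HY)=b$ to reduce it to $\EE\|Y\|^2 - b^TM^{-1}b$, then check that $Y^*=H^TM^{-1}b$ attains the bound. Your added verification that $Y^*$ is feasible and your remark on a.s.\ uniqueness are minor refinements of the paper's argument, not a different route.
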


\bibliographystyle{plainnat}
\bibliography{StoAvg}

%
%
%
%
%
%
%
%
\end{document}